\documentclass{amsart}
\usepackage{amsmath}
\usepackage{color}
\usepackage{mathtools}

\newtheorem{theorem}{Theorem}[section]
\newtheorem{lemma}[theorem]{Lemma}
\newtheorem{proposition}[theorem]{Proposition}
\newtheorem{corollary}[theorem]{Corollary}

 \theoremstyle{definition}
\newtheorem{definition}[theorem]{Definition}
\newtheorem{example}[theorem]{Example}

\theoremstyle{remark}

\numberwithin{equation}{section}

\begin{document}

\title[Ricci forms on noncompact complex manifolds]
{On Ricci forms of canonical metrics over noncompact complex manifolds}

\author{Hanzhang Yin}
\address{School of Mathematics, Harbin Institute of Technology,
         Harbin, Heilongjiang 150001, China}
\email{YinHZ@hit.edu.cn}

\begin{abstract}

In this paper, we study several types of geometric problems related to the Ricci curvature on noncompact complex manifolds, such as the existence of K\"{a}hler-Einstein metrics on complete K\"{a}hler manifolds with negative Ricci curvature, which can be seen as an improvement of the main theorem in Cheng-Yau \cite{CY80}; the existence of canonical Hermitian metrics with prescribed Ricci curvature on complete Hermitian manifolds, which can be regarded as noncompact versions of the Gauduchon conjecture on certain complete complex surfaces. Our method can also be used to construct Hesse-Einstein metrics in affine differential geometry.

\noindent{Keywords: K\"{a}hler-Einstein metrics; Gauduchon conjecture; Noncompact complex manifolds; Monge-Amp\`{e}re equations.}

\end{abstract}

\maketitle

\section{Introduction}

Let $M$ be a complete Hermitian manifold of dimension $n$. In this paper we consider the following prescribed Ricci curvature problem:
\begin{equation}\label{e1.7}
{\rm Ric}(\omega)=\Psi,
\end{equation}
where ${\rm Ric}(\omega)$ is the Chern-Ricci form of some complete canonical Hermitian metric $\omega=\sqrt{-1}{\omega_{i\bar{j}}}dz^i\wedge d\bar{z}^j$, and $\Psi$ is a given real $(1,1)$-form. For a Hermitian metric $g$ with K\"{a}hler form $\omega$, ${\rm Ric}(\omega)=-\sqrt{-1}\partial\overline{\partial}\log (\det g)$.

If $(M,\alpha)$ is a compact K\"{a}hler manifold (i.e. $d\alpha=0$), Yau's celebrated solution \cite{Yau78} of Calabi conjecture says that given any smooth representative $\Psi$ of the first Chern class $c_1(M)$, there exists a unique K\"{a}hler metric $\omega$ cohomologous to $\alpha$ such that \eqref{e1.7} holds.

Suppose that $\Psi=\lambda \omega$, where $\lambda$ is a constant, the K\"{a}hler metric $\omega$ that satisfies \eqref{e1.7} is called the K\"{a}hler-Einstein metric. The existence of the K\"{a}hler-Einstein metrics on compact K\"{a}hler manifolds has been established in the following cases: When $c_1(M)<0$, there exists a K\"{a}hler metric $\omega$ such that ${\rm Ric}(\omega)=-\omega$, see Aubin \cite{Aubin76} and Yau \cite{Yau78}; When $c_1(M)=0$, there exists a K\"{a}hler metric $\omega$ such that ${\rm Ric}(\omega)=0$, see Yau \cite{Yau78}; When $M$ is a Fano manifold (i.e. a projective manifold with $c_1(M)>0$), there exists a K\"{a}hler metric $\omega$ such that ${\rm Ric}(\omega)=\omega$ if and only if $M$ is K-stable, see Chen-Donaldson-Sun \cite{CDS15a,CDS15b,CDS15c} and Tian \cite{Tian97,Tian15}.

The existence of K\"{a}hler metrics with negative scalar curvature can be reduced to solving the following type of Monge-Amp\`{e}re equations
\begin{equation}\label{e1.8}
(\omega+\sqrt{-1}\partial\overline{\partial} u)^n=e^{u+h}\omega^n,
\end{equation}
where $u$ is the unknown function to be solved for, and $h$ is a given function. In the case of a noncompact K\"{a}hler manifold $(M,\omega)$, Cheng-Yau \cite{CY80} proved that if a complex manifold admits a complete K\"{a}hler
metric whose Ricci curvature is bounded from above by a negative constant, whose injectivity radius is bounded from below and whose curvature tensor
and its covariant derivatives are bounded, then the manifold admits a unique complete K\"{a}hler-Einstein metric metric.

The first goal of this paper is to remove the restrictions on the injectivity radius and covariant derivatives of the curvature tensor mentioned in the above theorem. We prove the following theorem:
\begin{theorem}\label{t1.1}
Let $(M,\alpha)$ be a complete K\"{a}hler manifold such that its curvature tensor $|Rm|<\infty$ and ${\rm Ric}(\alpha)\leq-c\alpha$ for some constant $c>0$. Then $M$ admits a unique complete K\"{a}hler-Einstein metric with scalar curvature equal to $-1$.
\end{theorem}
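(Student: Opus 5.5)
We seek the Kähler–Einstein metric in the form $\omega=\alpha+\sqrt{-1}\partial\overline{\partial} u$ with $\omega$ uniformly equivalent to $\alpha$ and ${\rm Ric}(\omega)=-\omega$. Since ${\rm Ric}(\omega)={\rm Ric}(\alpha)-\sqrt{-1}\partial\overline{\partial}\log(\omega^n/\alpha^n)$, it suffices to solve
\begin{equation*}
(\alpha+\sqrt{-1}\partial\overline{\partial} u)^n=e^{u+h}\alpha^n,\qquad \sqrt{-1}\partial\overline{\partial} h={\rm Ric}(\alpha)+\alpha,
\end{equation*}
which is \eqref{e1.8}. The form ${\rm Ric}(\alpha)+\alpha$ need not be $\partial\overline{\partial}$-exact. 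To avoid this, we first rescale $\alpha$ to $\tilde\alpha=-{\rm Ric}(\alpha)$ if necessary. Since $|Rm|<\infty$ and ${\rm Ric}(\alpha)\le -c\alpha$, we have $c\alpha\le -{\rm Ric}(\alpha)\le C\alpha$, so $\tilde\alpha$ is a complete Kähler metric uniformly equivalent to $\alpha$. We then take the reference metric to be $\tilde\alpha$ and look for $\omega=\tilde\alpha+\sqrt{-1}\partial\overline{\partial}u$. The equation becomes $\omega^n=e^{u}\,\tilde\alpha^n\cdot e^{F}$, where $F=\log(\alpha^n/\tilde\alpha^n)$ is bounded by the uniform equivalence. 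A Kähler–Einstein metric with ${\rm Ric}(\omega)=-\omega$ then gives scalar curvature $-n$, and a constant rescaling normalizes it to $-1$ as the statement requires.

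The key issue is that $\tilde\alpha$ has bounded curvature only in a weak sense, and Cheng–Yau needed quasi-coordinates (injectivity radius) and derivative bounds. To remove these I plan to run Shi's Ricci flow smoothing first. Since $|Rm_\alpha|$ is bounded, the Kähler–Ricci flow starting at $\alpha$ exists for a short time $[0,T]$. For $t\in(0,T]$, $\alpha_t$ is Kähler, uniformly equivalent to $\alpha$, and has all covariant derivatives of curvature bounded. Moreover ${\rm Ric}(\alpha_t)\le -\tfrac c2\alpha_t$ for small $t$, since the Ricci form changes by $\partial\overline{\partial}$ of a bounded potential, $\log(\alpha_t^n/\alpha^n)$, with controlled Hessian. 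Replacing $\alpha$ by $\alpha_t$, we may assume bounded geometry of all orders except the injectivity radius, and $h$ (or $F$) has bounded derivatives of all orders.

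Existence is then obtained by exhaustion together with a priori estimates that avoid injectivity radius. We solve the Dirichlet problem on a smooth exhaustion $\Omega_k$, or use the continuity method directly, with the maximum principle on the complete manifold. Yau's generalized maximum principle applies because $|Rm|$ is bounded, so Ricci is bounded below. It gives $\sup|u|\le \sup|F|$. The Aubin–Yau/Chern–Lu second-order estimate, with the bisectional curvature lower bound of $\alpha$ and the maximum principle applied to $\log{\rm tr}_\alpha\omega - Au$, gives $C^{-1}\alpha\le\omega\le C\alpha$.

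The main obstacle is the higher-order estimates without quasi-coordinates. For these I would pass to the local universal cover. On a ball of radius $r_0\sim|Rm|^{-1/2}$ around any point, the exponential map pulled back to the tangent space gives a Kähler metric with bounded geometry and injectivity radius bounded below, using the smoothing of the previous step to get holomorphic coordinates with $C^k$ control. Lifting the equation there and applying Evans–Krylov and Schauder theory yields uniform $C^{k}$ bounds, which descend to $M$. Passing to the limit along the exhaustion yields a complete solution.

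Uniqueness follows from Yau's Schwarz lemma. If $\omega_1,\omega_2$ are complete Kähler–Einstein metrics with ${\rm Ric}=-\omega_i$, the Chern–Lu inequality applied to the identity map gives $\omega_1\le\omega_2$ and, symmetrically, $\omega_2\le\omega_1$. This uses that each has Ricci bounded below and the other has holomorphic sectional curvature bounded above; if that bound is not available, we can alternatively compare volume forms via the generalized maximum principle applied to $\log(\omega_1^n/\omega_2^n)$.
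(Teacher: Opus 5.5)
Your route differs from the paper's in the existence step, though the outer frame is the same. Like you, the paper smooths by Shi's K\"ahler--Ricci flow (Lemma \ref{l7.9}), takes $-{\rm Ric}$ of the smoothed metric as the reference metric, solves $(-{\rm Ric}+\sqrt{-1}\partial\overline{\partial}u)^n=e^{u+F}(-{\rm Ric})^n$ (Lemma \ref{l7.8}), and gets uniqueness from the volume-form Schwarz lemma. Unlike you, it never constructs quasi-coordinates. It invokes its general Theorem \ref{t7.6}, which exhausts $M$ by the Lee--Tam conformally modified complete manifolds $(U_{\rho_0},e^{2\tilde F}\alpha)$ of bounded geometry (Lemma \ref{l7.3}). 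On each of these it runs a continuity method, with Sz\'ekelyhidi-type $C^2$ estimates driven by the generalized maximum principle and a blow-up gradient bound, and then passes to the limit on compact sets. You stay on $M$, use Yau's classical $C^2$ estimate (which for K\"ahler Monge--Amp\`ere needs no gradient bound), and handle the injectivity radius by lifting through $\exp_p$ to get quasi-coordinates; this is essentially the Cheng--Yau/Wu--Yau route. For this theorem your route is more economical. Its cost is building holomorphic coordinates with $C^k$ control for the pulled-back complex structure on the lifted balls; the exponential map is not holomorphic, so this is a real step, which you only sketch, though it is a known result. The paper's route avoids that construction and gives an existence theorem for general fully nonlinear equations on Hermitian manifolds, which it reuses later.

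One step is justified incorrectly. The bound ${\rm Ric}(\alpha_t)\le-\frac{c}{2}\alpha_t$ does not follow from ${\rm Ric}(\alpha_t)-{\rm Ric}(\alpha)=-\sqrt{-1}\partial\overline{\partial}\log(\alpha_t^n/\alpha^n)$. That Hessian is bounded, but it is not uniformly small as $t\to 0$. At $t=0$ only $|Rm|$ is bounded, so ${\rm Ric}(\alpha)$ may oscillate on scales shrinking to zero at infinity, and for each fixed $t>0$ the flow changes it by an amount of order one there. Integrating Shi's bound $|\nabla^2 Rm|\le C/t$ in time diverges, which reflects this. Only a one-sided bound survives, and it has to come from a maximum principle for the largest eigenvalue of ${\rm Ric}$ along the flow. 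The key input is that the reaction terms in $\partial_t{\rm Ric}-\Delta{\rm Ric}$ are $O(|Rm|^2)$, which yields ${\rm Ric}(\alpha_t)\le(-c+Ct)\alpha_t$; this is exactly what Lemma \ref{l7.9} does via Lemma \ref{lb1}.

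There are also two smaller corrections. First, the Dirichlet-problem-on-an-exhaustion option needs $k$-independent interior $C^2$ bounds, which you do not have. Yau's $C^2$ estimate is global, and complex Monge--Amp\`ere has no interior $C^{1,1}$ estimate in terms of $C^0$ alone (Pogorelov-type examples). So keep the continuity method on all of $M$, in H\"older spaces defined by your quasi-coordinates, with openness from the invertibility of $\Delta_\omega-1$ proved as in the paper. Second, for uniqueness Chern--Lu is not available, since a K\"ahler--Einstein metric need not have holomorphic sectional curvature bounded above by a negative constant. Your fallback is the right argument: $v=\log(\omega_1^n/\omega_2^n)$ satisfies $\sqrt{-1}\partial\overline{\partial}v=\omega_1-\omega_2$ and $\Delta_{\omega_2}v\ge n(e^{v/n}-1)$. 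Yau's maximum principle then gives $v\le 0$, symmetrically $v\ge 0$, hence $\omega_1=\omega_2$.
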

Wu-Yau \cite{WY20} obtained complete K\"{a}hler-Einstein metrics on complete K\"{a}hler manifolds with holomorphic sectional curvature bounded from above and below by two negative constants. Our Theorem \ref{t1.1} assumes that $(M,\alpha)$ has negative Ricci curvature instead of negative holomorphic sectional curvature.

To prove Theorem \ref{t1.1}, we need to consider equation \eqref{e1.8} on Hermitian manifolds. In fact, we can generalize \eqref{e1.8} to a general class of fully non-linear equations. Fix a real $(1,1)$-form $\chi$ on a complete noncompact Hermitian manifold $(M,\omega)$, for any $C^2$ function $u:M\rightarrow \mathbb{R}$ we have a new real $(1,1)$-form $g=\chi+\sqrt{-1}\partial\bar{\partial}u$, and we can define the endomorphism of $T^{1,0}M$ given by $A_j^i=\omega^{i\bar{p}}g_{j\bar{p}}$. Consider the equation for $u$ as follows:
\begin{equation}\label{e1.1}
F(A)=h(x)+\epsilon u,
\end{equation}
for a given function $h$ on $M$, where $\epsilon>0$ is a constant and
\begin{equation}
F(A)=f(\lambda_1,\ldots,\lambda_n)
\end{equation}
is a smooth symmetric function of the eigenvalues of $A$. Such equations without $\epsilon u$ on the right-hand side have been studied by Sz\'{e}kelyhidi \cite{S18} on compact Hermitian manifolds. Guo-Phong \cite{GP24} established the $L^\infty$-estimates for general classes of fully non-linear equations on Hermitian manifolds, their method can also be applied to open manifolds with a positive lower bound on their injectivity radii.

There are several structure conditions for equation \eqref{e1.1}:

\textbf{(a1)} $f$ is defined on an open symmetric convex cone
$\Gamma \subseteq \mathbb{R}^n$ and $\Gamma\neq \mathbb{R}^n$, containing the positive orthant $\Gamma_n=\{(x_1,\ldots,x_n)\in \mathbb{R}^n:x_i>0,i=1,\ldots,n\}$.

\textbf{(a2)} $f$ is symmetric, smooth, concave and increasing, i.e. its partials satisfy $f_i>0$ for all $i$.

\textbf{(a3)} $\sup_{\partial \Gamma}f<\inf_M h$.

\textbf{(a4)} For all $\mu\in \Gamma$, we have
\begin{equation}
\lim_{t\rightarrow\infty}f(t\mu)=\sup_{\Gamma} f,
\end{equation}
where both sides are allowed to be $\infty$.

The following definition was introduced by Guan \cite{Guan14} and Sz\'{e}kelyhidi \cite{S18}, we restate it here for application in our case.

\begin{definition}\label{d4.1}
We say that $u$ is a $\mathcal{C}$-subsolution for the equation $F(A)=h$ (see \eqref{e1.1} without $\epsilon u$ on the right-hand side) if the following holds. We require that for every point $x\in M$, if $\lambda=(\lambda_1,\ldots,\lambda_n)$ denote the eigenvalues of the endomorphism $\alpha^{i\bar{p}}g_{j\bar{p}}$ at $x$, then for all $i=1,\ldots,n$ we have
\begin{equation}\label{ne4.5}
\lim_{t\rightarrow\infty}f(\lambda+t\textbf{e}_i)>h(x)
\end{equation}
when $h(x)$ is independent of $u$. In our case \eqref{e1.1}, we instead require
\begin{equation}
\lim_{t\rightarrow\infty}f(\lambda+t\textbf{{e}}_i)=\infty.
\end{equation}
Here $\textbf{e}_i$ denotes the $i$th standard basis vector. Note that part of the requirement is that $\lambda+t\textbf{e}_i\in \Gamma$ for sufficiently large $t$, for the limit to be defined.
\end{definition}

We obtain the following existence results for solutions of \eqref{e1.1}.
\begin{theorem}\label{t7.6}
Let $(M,\alpha)$ be a complete Hermitian manifold, $\chi$ be a real $(1,1)$-form which is uniformly equivalent to $\alpha$, and $h$ be a smooth function.
Assume that
\begin{equation}
|T|_{\alpha}+|\nabla_{\alpha} T|_{\alpha}+|Rm|_{\alpha}+\sum_{k=0}^2|\nabla_{\alpha}^k \chi|_{\alpha}+\sum_{k=0}^2|\nabla_{\alpha}^k h|_{\alpha}<\infty.
\end{equation}
Suppose that the equation \eqref{e1.1} satisfies conditions \emph{\textbf{(a1)}}-\emph{\textbf{(a4)}}, $\Gamma= \Gamma_n$ and $F(\alpha^{i\bar{p}}\chi_{j\bar{p}})=0$, suppose $0$ is a $\mathcal{C}$-subsolution of \eqref{e1.1}. Then the equation \eqref{e1.1} exists a unique solution $u\in C^\infty(M)$ such that $\chi+\sqrt{-1}\partial\bar{\partial}u$ is uniformly equivalent to $\alpha$.
\end{theorem}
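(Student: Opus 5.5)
I plan to prove Theorem \ref{t7.6} by solving on an exhaustion, or equivalently by a continuity method on $M$ with a priori estimates independent of the domain. The estimates must not depend on an injectivity radius lower bound. The basic tool is the Omori--Yau maximum principle, which is available on a complete Hermitian manifold with bounded curvature and torsion.

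\textbf{Step 1: the $C^0$ estimate.} This is elementary because of the term $\epsilon u$. Suppose $u$ is a solution with $\chi+\sqrt{-1}\partial\bar\partial u$ uniformly equivalent to $\alpha$ and $u$ bounded. Apply Omori--Yau at an almost-maximum point $x_k$ of $u$. There $\sqrt{-1}\partial\bar\partial u\le \frac1k\alpha$, so by monotonicity (a2) we get
\begin{equation*}
h(x_k)+\epsilon u(x_k)=F(A)\le F(\alpha^{-1}\chi+\tfrac1k I)\to 0,
\end{equation*}
using $F(\alpha^{i\bar p}\chi_{j\bar p})=0$. Hence $\sup u\le \epsilon^{-1}\sup|h|$, and the symmetric argument gives $\inf u\ge -\epsilon^{-1}\sup|h|$. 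Uniqueness follows the same way: the difference $w=u_1-u_2$ of two solutions is bounded and satisfies a linear elliptic inequality $L w=\epsilon w$, with $L$ given by the mean value theorem applied to the concave $F$. Omori--Yau applied to $\pm w$ then forces $w\equiv 0$.

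\textbf{Step 2: second order and gradient estimates.} I will follow Sz\'ekelyhidi's scheme \cite{S18}, using $0$ as a $\mathcal C$-subsolution. Consider the test function
\begin{equation*}
G=\log\lambda_1+\varphi(|\nabla u|^2_\alpha)+\psi(u),
\end{equation*}
where $\lambda_1$ is the largest eigenvalue of $A$. The maximum is replaced by Omori--Yau approximate maxima, or by a cutoff on balls. Uniformity comes from the bounds on $T$, $\nabla T$, $Rm$, $\nabla^k\chi$ and $\nabla^k h$, and the term $\epsilon u$ contributes only a harmless $\epsilon\nabla u$ term when differentiated. The subsolution property, in the form $\lim_{t\to\infty} f(\lambda+t\textbf e_i)=\infty$, provides the key dichotomy: either $\sum F^{i\bar i}\ge \kappa\sum F^{i\bar i}$-type coercivity holds, or $F^{i\bar i}\ge\kappa\sum F^{k\bar k}$ for all $i$. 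This yields $\lambda_1\le C(1+\sup|\nabla u|^2)$. A blow-up argument then gives the gradient bound. Rescaling near a point where $|\nabla u|$ is large produces a Liouville problem for bounded $\Gamma_n$-subharmonic functions on $\mathbb C^n$, and this can be carried out in holomorphic coordinates on balls of fixed size, which exist under bounded curvature and torsion without any injectivity radius bound. Since $\Gamma=\Gamma_n$, the $C^2$ bound together with $F(A)=h+\epsilon u\ge \inf h - C>\sup_{\partial\Gamma}f$ from (a3) keeps $\lambda$ in a compact subset of $\Gamma_n$. This gives the uniform equivalence of $\chi+\sqrt{-1}\partial\bar\partial u$ and $\alpha$.

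\textbf{Step 3: higher regularity and existence.} I will apply Evans--Krylov, or a complex version on local holomorphic charts of uniform size, followed by Schauder estimates, to get uniform $C^{k,\beta}$ bounds. For existence I will use continuity in $s$ with right-hand side $sh+\epsilon u$, starting from $u=0$ at $s=0$. Openness comes from invertibility of $L-\epsilon$ on weighted or bounded Hölder spaces, which again uses the maximum principle together with local Schauder theory. Closedness comes from the uniform estimates and Arzel\`a--Ascoli. Alternatively, I can solve Dirichlet problems on an exhaustion and pass to the limit.

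I expect the main obstacle to be Step 2, specifically running the $C^2$ and blow-up gradient arguments uniformly on a noncompact manifold without an injectivity radius bound. The approach I will try first is to pass to the universal cover of small balls via the exponential map, or to quasi-coordinates, where the pulled-back metric has bounded geometry, and to carry out all local estimates there.
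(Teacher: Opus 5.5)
\textbf{Where you match the paper.} Your Step 1 and the $C^2$ computation in Step 2 are what the paper does: the $C^0$ bound of Theorem \ref{t4.1}, uniqueness via the generalized maximum principle of Lemma \ref{l2.5}, and the Sz\'ekelyhidi--Tosatti--Weinkove test function evaluated along approximate maxima. One correction: the first alternative of the subsolution dichotomy is $\sum_k F^{k\bar k}(\chi_{k\bar k}-g_{k\bar k})>\kappa\mathcal F$, not the tautology you wrote.

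\textbf{The gap.} It lies in how you handle the absence of bounded geometry, which is the whole content of this theorem beyond Theorem \ref{t7.5}.
\begin{itemize}
\item You rely on ``holomorphic coordinates on balls of fixed size, which exist under bounded curvature and torsion.'' They do not. Near a hyperbolic cusp ($T=0$, curvature $-1$), a biholomorphic chart of fixed size with metric comparable to Euclidean would contain a geodesic ball of fixed radius. That ball contains arbitrarily short loops that are non-contractible in $M$ but contractible in the chart, which is impossible.
\item Lifting small balls by the exponential map gives only local diffeomorphisms. You would still have to build holomorphic coordinates, with estimates, for a pulled-back non-K\"ahler complex structure.
\item Even then, the hypotheses bound $Rm$ but not $\nabla Rm$, and bound only $\nabla^{\le 2}\chi$ and $\nabla^{\le 2}h$. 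So you cannot expect more than $C^{1,\beta}$ control of $\alpha$, not ``bounded geometry.''
\item The uniform $C^{k,\beta}$ bounds you claim in Step 3, and the $\tilde C^{k+\beta}$ ($k\ge 4$) framework for openness and closedness, are unavailable in general. Since $h+\epsilon u=F(\alpha^{-1}(\chi+\sqrt{-1}\partial\bar\partial u))$, uniform $C^{k,\beta}$ bounds on $u$ would force uniform $C^{k-2,\beta}$ bounds on $h$, which are not assumed.
\item Your Dirichlet-exhaustion alternative would need boundary $C^2$ estimates and subsolutions matching the boundary data. You have neither.
\end{itemize}

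\textbf{The missing idea.} No uniform higher-order control on $M$ is needed. The paper uses the Lee--Tam conformal exhaustion.
\begin{itemize}
\item Take Shi's exhaustion function $\tilde\rho$ (Lemma \ref{l7.2}) and set $\tilde F=\mathfrak F(\tilde\rho/\rho_0)$. Then $(U_{\rho_0},e^{2\tilde F}\alpha)$ is complete with bounded geometry of infinite order (Lemma \ref{l7.3}).
\item Those constants may depend on $\rho_0$, which is harmless because $\overline{U_{\rho_0}}$ is compact. So Theorem \ref{t7.5} gives a solution $u_k$ of the equation with data $e^{2\tilde F}\alpha$, $e^{2\tilde F}\chi$, $h$. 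The normalization $F(\alpha^{-1}\chi)=0$ and the subsolution condition are unaffected by the conformal factor.
\item Lemma \ref{l7.5} shows that the order-two tensorial quantities in \eqref{e7.25}, which are all the $C^0$ and $C^2$ estimates depend on, are bounded independently of $\rho_0$. Hence $C^{-1}\alpha\le\chi+\sqrt{-1}\partial\bar\partial u_k\le C\alpha$ uniformly on $\{\tilde\rho<(1-\kappa+\kappa^2)\rho_k\}$, where $\tilde F=0$.
\item Interior Evans--Krylov and Schauder estimates on fixed compact sets, plus a diagonal argument, then give the solution on $M$.
\end{itemize}
So existence uses bounded geometry only on the approximating manifolds, uniformity is needed only at order two, and all higher regularity is local. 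Your global route might be salvaged with $C^{1,\beta}$ holomorphic quasi-coordinates and a $C^{2,\beta}$ continuity method. That is a substantial construction your proposal does not supply.
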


\textbf{Remark}: When we take $f(\lambda_1,\ldots,\lambda_n)=\log\lambda_1\ldots\lambda_n$,  $\Gamma=\Gamma_n$, \eqref{e1.1} is the complex Monge-Amp\`{e}re equation, hence we can use Theorem \ref{t7.6} to solve equations like \eqref{e1.8}. A related equation, the $(n-1)$ Monge-Amp\`{e}re equation was introduced by Fu-Wang-Wu \cite{FWW10}, in this case we take $f=\log \widetilde{\lambda_1}\ldots\widetilde{\lambda_n}$, $\Gamma=\Gamma_n$, where $\widetilde{\lambda_k}=\frac{1}{n-1}\sum_{i\neq k}\lambda_i$
for each $k$. Note that a class of $(n-1)$ Monge-Amp\`{e}re equations satisfy conditions \textbf{(a1)}-\textbf{(a4)}, see \cite{STW17}, further studies on such equations can be found in \cite{FWW15,JL23,TW17,TW19}.

Another geometric problem related to equation \eqref{e1.7} is the Gauduchon conjecture: Let $M$ be a compact complex manifold of dimension $n$ and $\Psi$ be a closed real $(1,1)$-form on $M$ with $[\Psi]=c_1^{{\rm BC}}(M)\in H_{{\rm BC}}^{1,1}(M,\mathbb{R})$. Then there is a Gauduchon metric (i.e. $\partial\overline{\partial}(\omega^{n-1})=0$) on $M$ satisfies \eqref{e1.7}. Where
\[H_{{\rm BC}}^{1,1}(M,\mathbb{R})=\frac{\{d-{\rm closed\;real }\;(1,1)-{\rm forms}\}}{\{\sqrt{-1}\partial\overline{\partial}\psi:\psi\in C^\infty(M,\mathbb{R})\}}\]
and $c_1^{{\rm BC}}(M)=[{\rm Ric}(\omega)]\in H_{{\rm BC}}^{1,1}(M,\mathbb{R})$. Sz\'{e}kelyhidi-Tosatti-Weinkove \cite{STW17} solved the above conjecture by studying $(n-1)$ Monge-Amp\`{e}re equations. Motivated by this work, we study the following prescribed Ricci curvature problem on certain noncompact Hermitian manifolds $(M^n,\alpha)$ with
\[\partial\overline{\partial}\alpha=0,\;\;\;\; \partial\overline{\partial}\alpha^2=0.\]
Such metrics were studied by Fino-Tomassini \cite{FT11}, they proved that $\partial\overline{\partial}\alpha=0$, $\partial\overline{\partial}\alpha^2=0$ is equivalent to that $\partial\overline{\partial}\alpha^k=0$ for all $k=1,2,\ldots,n$. By using Theorem \ref{t7.6} and the $L^\infty$ estimates of certain complex Monge-Amp\`{e}re equations, we obtain the following:
\begin{theorem}\label{t1.4}
Let $(M^n,\alpha)$ be a complete noncompact Hermitian manifold with $\partial\overline{\partial}\alpha=0$, $\partial\overline{\partial}\alpha^2=0$ and $|T|_{\alpha}+|\nabla_{\alpha} T|_{\alpha}+|Rm|_{\alpha}<\infty$. Let $f$ be a smooth function on $M$. Assume the following:

\emph{(a)} the following Sobolev inequality is true
\begin{equation}
\Big(\int_M|\phi|^{\frac{2n}{n-1}}dV\Big)^{\frac{n-1}{n}}\leq C_1\int_M |\nabla\phi|^2dV
\end{equation}
for some $C_1>0$ and all $\phi\in C_0^\infty(M)$.

\emph{(b)}
\begin{equation}
|f|(x)\leq \frac{C_2}{1+\rho_0^{2+\epsilon}(x)};\;\;\;\sum_{k=1}^2|\nabla_{\alpha}^k f|_{\alpha}<\infty
\end{equation}
for some $C_2,\epsilon>0$, and all $x\in M$. Where $\rho_0$ is the distance function from a fixed $o\in M$.

\emph{(c)} There exists a constant $C_3>0$ such that
\begin{equation}
Vol(B_r(x))\leq C_3 r^{2n}
\end{equation}
for some $C_3>0$ and all $r$ where $Vol(B_r(x))$ is the volume of the geodesic ball with radius $r$ centered at some $x\in M$.

Let $\Psi={\rm Ric}(\alpha)-\sqrt{-1}\partial\overline{\partial}f$, then there is a complete Hermitian metric $\omega$ on $M$ satisfies $\partial\overline{\partial}\omega=0$, $\partial\overline{\partial}\omega^2=0$ and \eqref{e1.7}.
\end{theorem}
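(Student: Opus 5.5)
Following the approach of Sz\'{e}kelyhidi-Tosatti-Weinkove \cite{STW17}, I would reduce the problem to an $(n-1)$ Monge-Amp\`{e}re equation. Look for $\omega$ with
\[\omega^{n-1}=\alpha^{n-1}+\sqrt{-1}\partial\overline{\partial}u\wedge\alpha^{n-2}+\mathrm{Re}\big(\sqrt{-1}\partial u\wedge\overline{\partial}\alpha^{n-2}\big),\]
or a simpler variant adapted to $\partial\overline{\partial}\alpha^k=0$. Since $\partial\overline{\partial}\alpha^{n-1}=0$ and $\partial\overline{\partial}\alpha^{n-2}=0$ (Fino-Tomassini \cite{FT11}), any such $\omega$ is again pluriclosed and astheno-K\"{a}hler type, so $\partial\overline{\partial}\omega=0$ and $\partial\overline{\partial}\omega^2=0$. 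This needs checking via the equivalence in \cite{FT11}. The Ricci equation ${\rm Ric}(\omega)={\rm Ric}(\alpha)-\sqrt{-1}\partial\overline{\partial}f$ is equivalent to $\sqrt{-1}\partial\overline{\partial}\big(\log(\omega^n/\alpha^n)-f\big)=0$. So it suffices to solve
\[\log\frac{\omega^n}{\alpha^n}=f+\epsilon u\]
in a limiting sense with $\epsilon\to0$, or more precisely $\log(\omega^n/\alpha^n)=f+b$ with $b$ constant, which on a noncompact manifold we may take to be $0$ after showing $\epsilon u_\epsilon\to0$. Writing this in terms of the eigenvalues $\lambda$ of the $(1,1)$-form $\tilde\chi+\sqrt{-1}\partial\overline{\partial}u$ (the Hodge-$\star$ transform), it is of the form $F(A)=h+\epsilon u$ with $f(\lambda)=\log\widetilde\lambda_1\cdots\widetilde\lambda_n$ on $\Gamma_n$. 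By the Remark after Theorem \ref{t7.6}, this satisfies \textbf{(a1)}-\textbf{(a4)}. Moreover $0$ is a $\mathcal C$-subsolution, since $\widetilde\lambda$ blows up when any $\lambda_i\to\infty$, and $F(\chi)=0$ at $u=0$.

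\textbf{Step 1.} Apply Theorem \ref{t7.6} with $h=f$ for each $\epsilon\in(0,1]$. The bounds on $T,\nabla T,Rm$ and on $\nabla^k f$ ($k\le 2$, with $f$ bounded by (b)) supply its hypotheses, giving a solution $u_\epsilon$ with $\omega_\epsilon$ uniformly equivalent to $\alpha$.

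\textbf{Step 2, the main obstacle.} Obtain $\epsilon$-independent bounds so that $\epsilon\to0$ is possible. The maximum principle only gives $|\epsilon u_\epsilon|\le\sup|f|$, which does not bound $u_\epsilon$ itself. Instead I would prove a uniform $L^\infty$ bound via Moser iteration, which is where (a), (b), (c) enter. Write $\omega_\epsilon^n=e^{f+\epsilon u}\alpha^n$ and test against $|u|^{p}u$, integrating by parts using $\partial\overline{\partial}\alpha^{n-1}=0$ (which kills the torsion terms, as in the Gauduchon setting). This yields
\[\int|\nabla|u|^{p/2+1}|^2\lesssim p\int|f||u|^{p+1}\]
up to sign-favorable $\epsilon$ terms. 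The Sobolev inequality (a), together with the decay $|f|\le C(1+\rho_0)^{-2-\epsilon}$ and the volume growth (c), makes $f\in L^{n}\cap L^{q}$ for some $q<n$. A Hardy/H\"older split then handles the right side and gives decay-weighted $L^p$ bounds, and iteration gives $\sup|u_\epsilon|\le C$ independent of $\epsilon$. To start the iteration one needs $u_\epsilon\in L^{p_0}$ uniformly. I would get this by first comparing with a barrier $v$ solving $\Delta v=-|f|$, which is bounded and decaying by (b) and (c), using the maximum principle.

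\textbf{Step 3.} With $\sup|u_\epsilon|$ uniform, the second-order and higher estimates underlying Theorem \ref{t7.6} (the $C^2$ bound via the $\mathcal C$-subsolution, then Evans-Krylov/Schauder) become $\epsilon$-independent. Hence $\omega_\epsilon$ is uniformly equivalent to $\alpha$ uniformly in $\epsilon$. Passing to a locally smooth subsequential limit $u_\epsilon\to u$ gives $\omega$ uniformly equivalent to $\alpha$, hence complete, with $\log(\omega^n/\alpha^n)=f$. Therefore ${\rm Ric}(\omega)={\rm Ric}(\alpha)-\sqrt{-1}\partial\overline{\partial}f=\Psi$, and $\partial\overline{\partial}\omega=\partial\overline{\partial}\omega^2=0$ by construction.
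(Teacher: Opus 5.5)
\textbf{There is a genuine gap at the start, and it affects the conclusion itself: for $n\geq 3$ the $(n-1)$ Monge--Amp\`ere ansatz does not produce a metric in the required class.}

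Your construction, with or without the gradient term $\mathrm{Re}(\sqrt{-1}\partial u\wedge\overline{\partial}\alpha^{n-2})$, only controls $\omega^{n-1}$. Since $\partial\overline{\partial}\alpha^{n-1}=\partial\overline{\partial}\alpha^{n-2}=0$, you get $\partial\overline{\partial}\omega^{n-1}=0$, i.e.\ $\omega$ is Gauduchon, which is exactly what \cite{STW17} needed.

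But $\omega$ itself is a nonlinear $(n-1)$-st root of that $(n-1,n-1)$-form. Nothing forces $\partial\overline{\partial}\omega=0$ or $\partial\overline{\partial}\omega^2=0$. The Fino--Tomassini result goes from $\{\partial\overline{\partial}\omega=0,\ \partial\overline{\partial}\omega^2=0\}$ to $\partial\overline{\partial}\omega^k=0$ for all $k$; it does not go backwards from $k=n-1$. For $n=2$ your ansatz collapses to $\omega=\alpha+\sqrt{-1}\partial\overline{\partial}u$ and the problem disappears, which is a hint.

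The same choice breaks Step 2. Moser iteration needs $\omega^n-\alpha^n=\sqrt{-1}\partial\overline{\partial}u\wedge\Theta$ with $\Theta$ positive and $\partial\overline{\partial}$-closed, so that integrating by parts leaves only cutoff errors. For the $(n-1)$-form equation no such factorization exists. The $(1,1)$-form whose determinant you prescribe differs from a fixed form by $\frac{1}{n-1}\big((\Delta_\alpha u)\alpha-\sqrt{-1}\partial\overline{\partial}u\big)$, which is not $\partial\overline{\partial}$-exact. So the claim that ``$\partial\overline{\partial}\alpha^{n-1}=0$ kills the torsion terms'' is unjustified there. Also, with the gradient term present, Theorem \ref{t7.6} does not apply at all, since it only treats $F(\alpha^{i\bar p}(\chi_{j\bar p}+u_{j\bar p}))$.

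\textbf{The fix (the paper's route).} Solve the ordinary equation $(\alpha+\sqrt{-1}\partial\overline{\partial}\varphi)^n=e^{f}\alpha^n$ and set $\omega=\alpha+\sqrt{-1}\partial\overline{\partial}\varphi$.
\begin{itemize}
\item Since $\sqrt{-1}\partial\overline{\partial}\varphi$ is $d$-closed, $\partial\overline{\partial}\omega=\partial\overline{\partial}\alpha=0$ and $\partial\overline{\partial}\omega^2=\partial\overline{\partial}\alpha^2+2\,\partial\overline{\partial}\alpha\wedge\sqrt{-1}\partial\overline{\partial}\varphi=0$.
\item Applying $-\sqrt{-1}\partial\overline{\partial}\log$ to the equation gives ${\rm Ric}(\omega)=\Psi$.
\item By \cite{FT11}, $\partial\overline{\partial}\alpha^k=0$ for all $k$. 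Hence $\tilde\omega=\sum_{k=1}^n\alpha^{k-1}\wedge\omega^{n-k}$ is $\partial\overline{\partial}$-closed, satisfies $\tilde\omega\geq\alpha^{n-1}$, and gives $\omega^n-\alpha^n=\sqrt{-1}\partial\overline{\partial}\varphi\wedge\tilde\omega$. This is what makes your Step 2 work, applied to the perturbed equations with right-hand side $e^{f+\delta\varphi}$, each solvable by Theorem \ref{t7.6}.
\end{itemize}

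You also need two ingredients you did not supply:
\begin{itemize}
\item A qualitative integrability lemma: $\int_M|\varphi_\delta|^p\alpha^n<\infty$ for each fixed $\delta$. It is proved with weights $(1+\rho)^q$ and the volume growth (c), and it is needed so the cutoff terms vanish as $R\to\infty$.
\item A start for the iteration. This comes not from a barrier but from H\"older's inequality with $|e^f-1|\in L^{n(p_0+1)/(n+p_0)}$. Hypotheses (b) and (c) give this once $\frac{p_0+1}{n+p_0}(2+\epsilon)>2$.
\end{itemize}
With that, your Steps 1 and 3 go through essentially as you describe.
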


As the noncompact versions of Calabi conjecture on open manifolds, Tian-Yau \cite{TY90,TY91} studied the solvability of complex Monge-Amp\`{e}re equations in order to construct Ricci-flat K\"{a}hler metrics on quasiprojective manifolds. The parabolic versions of such equations are discussed in Chau-Tam \cite{CT10,CT11}. Theorem \ref{t1.4} can be regarded as a generalization of Proposition 4.1 in \cite{TY91} to Hermitian manifolds $(M^n,\alpha)$ with $\partial\overline{\partial}\alpha=0$, $\partial\overline{\partial}\alpha^2=0$. Guan-Li \cite{GL10} solved complex Monge-Amp\`{e}re equations on compact Hermitian manifolds $(M^n,\alpha)$ with $\partial\overline{\partial}\alpha=0$, $\partial\overline{\partial}\alpha^2=0$, our results can also be regarded as a noncompact version of Theorem 1.3 in \cite{GL10}. We can also generalize Theorem 1.1 of Tian-Yau \cite{TY90} to such Hermitian manifolds, see the following theorem:

\begin{theorem}\label{t1.5}
Let $(M^n,\alpha)$ be a complete noncompact Hermitian manifold with $\partial\overline{\partial}\alpha=0$, $\partial\overline{\partial}\alpha^2=0$ and $|T|_{\alpha}+|\nabla_{\alpha} T|_{\alpha}+|Rm|_{\alpha}<\infty$. Assume that $Vol(B_r(x_0))\leq C_2 r^{2}$ for all $r>0$, and $Vol(B_1(x))\geq C_2^{-1}(1+\rho(x_0,x))^{-\beta}$ for some fixed point $x_0$ in $M$ and some positive constants $C_2,\beta$ independent of $x$, where $\rho$ is the distance function on $M$, $B_r(x_0)$ is the geodesic ball with radius $r$ centered at $x_0\in M$. Let $f$ be a smooth function on $M$ satisfying:
\begin{equation}
\int_M(e^f-1)\omega^n=0
\end{equation}
\begin{equation}
|f|(x)\leq \frac{C_2}{(1+\rho(x_0,x))^{4+2\beta}};\;\;\;\sum_{k=1}^2|\nabla_{\alpha}^k f|_{\alpha}<\infty,
\end{equation}
for some constant $C_2$. Let $\Psi={\rm Ric}(\alpha)-\sqrt{-1}\partial\overline{\partial}f$, then there is a complete Hermitian metric $\omega$ on $M$ satisfies $\partial\overline{\partial}\omega=0$, $\partial\overline{\partial}\omega^2=0$ and \eqref{e1.7}.
\end{theorem}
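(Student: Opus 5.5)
We reduce Theorem \ref{t1.5} to a complex Monge--Amp\`ere equation of Guan--Li type. We look for $\omega$ with $\omega^{n-1}=\alpha^{n-1}+\sqrt{-1}\partial\overline{\partial}(u\,\alpha^{n-2})$-type corrections, or more simply $\omega=\alpha+\sqrt{-1}\partial\overline{\partial}u$. Since $\partial\overline{\partial}\alpha=\partial\overline{\partial}\alpha^2=0$, the result of Fino--Tomassini gives $\partial\overline{\partial}\omega^k=0$ for all $k$. Indeed, $\partial\overline{\partial}\omega=\partial\overline{\partial}\alpha=0$, and $\partial\overline{\partial}\omega^2=\partial\overline{\partial}(\alpha^2+2\alpha\wedge\sqrt{-1}\partial\overline{\partial}u)=0$ because $\partial\overline{\partial}\alpha=0$. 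The prescribed Ricci condition ${\rm Ric}(\omega)={\rm Ric}(\alpha)-\sqrt{-1}\partial\overline{\partial}f$ then becomes
\[
(\alpha+\sqrt{-1}\partial\overline{\partial}u)^n=e^{f}\alpha^n ,
\]
up to a pluriharmonic normalization that is absorbed into $f$. The normalization $\int_M(e^f-1)\alpha^n=0$ is the compatibility condition. We solve this equation by the continuity/perturbation method used for Theorem \ref{t7.6}: for $\epsilon>0$ we solve
\[
(\alpha+\sqrt{-1}\partial\overline{\partial}u_\epsilon)^n=e^{f+\epsilon u_\epsilon}\alpha^n .
\]
Theorem \ref{t7.6} applies with $F=\log\det$, $\chi=\alpha$, $h=f$ and $\Gamma=\Gamma_n$. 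Here $0$ is a $\mathcal{C}$-subsolution, and the torsion, curvature and $C^2$ bounds on $f$ are assumed. This gives a unique $u_\epsilon$ with $\alpha+\sqrt{-1}\partial\overline{\partial}u_\epsilon$ uniformly equivalent to $\alpha$, although the equivalence constants may a priori depend on $\epsilon$.

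The core of the proof is a uniform-in-$\epsilon$ $C^0$ bound on $u_\epsilon$, following Tian--Yau \cite{TY90}. Three hypotheses enter here. First, the quadratic volume growth $Vol(B_r(x_0))\le C_2r^2$ makes $M$ parabolic-like. Second, the lower bound $Vol(B_1(x))\ge C_2^{-1}(1+\rho)^{-\beta}$ controls the local Sobolev/Moser constants. Third, the decay $|f|\le C(1+\rho)^{-4-2\beta}$ ensures $e^f-1\in L^1\cap L^\infty$ with enough weight.

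The bound proceeds in four steps.

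\emph{Step 1: integral identity.} We multiply the difference $\omega_\epsilon^n-\alpha^n=(e^{f+\epsilon u_\epsilon}-1)\alpha^n$ by $|u_\epsilon|^{p}u_\epsilon$ times cutoffs $\eta_R$ and integrate by parts. The condition $\partial\overline{\partial}\alpha^k=0$ kills the torsion terms, exactly as in Guan--Li \cite{GL10}. This yields
\[
\int|\nabla |u|^{(p+2)/2}|^2_\alpha\,\alpha^n\lesssim p\int |u|^{p+1}|e^{f+\epsilon u}-1|\alpha^n ,
\]
plus cutoff errors of size $R^{-2}Vol(B_{2R})=O(1)$, which is where quadratic growth is used.

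\emph{Step 2: local Moser iteration.} On unit balls we run Moser iteration. The lower volume bound gives local sup bounds of the form $\sup_{B_1(x)}|u|\le C(1+\rho(x))^{\beta'}\|u\|_{L^2(B_2(x))}$.

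\emph{Step 3: weighted $L^2$ bound.} Using $\epsilon u\,(e^{\epsilon u}-1)\ge0$ and $\int(e^f-1)=0$, we obtain a weighted $L^2$ bound on $u$ in terms of $\int |f|(1+\rho)^{2+\beta}$. This integral is finite thanks to the exponent $4+2\beta$.

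\emph{Step 4: uniform estimate.} Combining Steps 1--3 gives $\|u_\epsilon\|_{C^0}\le C$ independent of $\epsilon$.

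This step is the main obstacle, and I expect a Tian--Yau style growth estimate followed by a maximum-principle argument to be needed rather than a global Sobolev inequality.

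Once we have the uniform $C^0$ bound, we take the second-order estimate from the proof of Theorem \ref{t7.6}. That estimate is an Aubin--Yau/Chern--Lu type computation with the maximum principle of Omori--Yau, which is valid since $|Rm|,|T|,|\nabla T|$ are bounded, and it gives $C^{-1}\alpha\le\omega_\epsilon\le C\alpha$ uniformly in $\epsilon$. Evans--Krylov and Schauder estimates then give uniform local $C^{k}$ bounds. Letting $\epsilon\to0$ along a subsequence, $u_\epsilon\to u$ in $C^\infty_{loc}$, and $u$ solves $(\alpha+\sqrt{-1}\partial\overline{\partial}u)^n=e^f\alpha^n$. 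The metric $\omega=\alpha+\sqrt{-1}\partial\overline{\partial}u$ is uniformly equivalent to $\alpha$, hence complete, and it satisfies $\partial\overline{\partial}\omega=\partial\overline{\partial}\omega^2=0$. Moreover ${\rm Ric}(\omega)={\rm Ric}(\alpha)-\sqrt{-1}\partial\overline{\partial}f=\Psi$, which is \eqref{e1.7}.
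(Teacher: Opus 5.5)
Your proposal follows the paper's route for Theorem \ref{t1.5}: set $\omega=\alpha+\sqrt{-1}\partial\overline{\partial}\varphi$ and solve $(\alpha+\sqrt{-1}\partial\overline{\partial}\varphi)^n=e^f\alpha^n$ (Theorem \ref{t8.5}), getting the perturbed solutions from Theorem \ref{t7.6}, importing the perturbation-uniform $C^0$ bound from Tian--Yau \cite{TY90} with $\partial\overline{\partial}\alpha^k=0$ in place of $d\alpha=0$ in the integrations by parts, and then applying $-\sqrt{-1}\partial\overline{\partial}\log$. One point in your Step 1 needs tightening: with cutoffs, $\partial\overline{\partial}\tilde{\omega}=0$ does not remove all torsion terms, because a term $\int|u_\epsilon|^{p+2}\,\overline{\partial}\eta_R^2\wedge\partial\tilde{\omega}$ of size $R^{-1}\int_{B_{2R}\setminus B_R}|u_\epsilon|^{p+2}$ survives, and quadratic volume growth only bounds it by $O(R)$, not $O(1)$; you need the fixed-$\epsilon$ integrability $u_\epsilon\in L^p(M)$ (the analogue of Lemma \ref{l8.2}, which uses $\epsilon>0$ and the decay of $f$) to make the cutoff errors vanish as $R\to\infty$.
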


Let $\dim M=n$, when $\partial\overline{\partial}\alpha=0$, the metric $\alpha$ is called \emph{strong KT} (see e.g. \cite{GHR84}); when $\partial\overline{\partial}\alpha^{n-2}=0$, in the terminology of Jost-Yau \cite{JY93} the metric $\alpha$ is called \emph{astheno-K\"{a}hler}; when $n=2$, the conditions $\partial\overline{\partial}\alpha=0$ and $\partial\overline{\partial}\alpha^2=0$ are equivalent to $\alpha$ being a Gauduchon metric. Therefore, Theorem \ref{t1.4} and Theorem \ref{t1.5} can be regarded as noncompact versions of the Gauduchon conjecture on certain complete complex surfaces. Some examples of manifolds that satisfy the conditions in Theorem \ref{t1.4} or Theorem \ref{t1.5} can be found in Section 5, see Example \ref{ex5.3}, \ref{ex5.4} and \ref{ex5.6}.

A related problem, the existence of Hesse-Einstein metrics on compact Hessian manifolds (affine K\"{a}hler manifolds) with negative first Chern class, was solved by Cheng-Yau \cite{CY82}, which is also called Einstein K\"{a}hler affine metric in \cite{CY82}. For the relationship between Hessian manifolds and K\"{a}hler manifolds, and the relationship between Einstein metrics on them, see Section 6. Puechmorel-T{\^o} \cite{PT23} used geometric flow to provide another method for constructing the above metrics. For more studies on affine differential geometry, see (\cite{CY77, HW17, LYZ05}).
In the following, we use the a priori estimates for \eqref{e1.1} to solve the real Monge-Amp\`{e}re equations and construct the Hesse-Einstein metrics on certain noncompact affine manifolds.
\begin{theorem}\label{t1.6}
Let $(M,D,g)$ be a complete Hessian manifold. Suppose that
\begin{equation}
\sum_{k=0}^4|D^k g|_g<\infty\;\;{\rm and}\;\;\kappa(g)\geq cg
\end{equation}
for some constant $c>0$. Then $M$ admits a unique complete Hesse-Einstein metric $g$ with $\beta(g)=-g$.
\end{theorem}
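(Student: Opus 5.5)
The plan is to reduce Theorem \ref{t1.6} to a real Monge-Amp\`{e}re equation on $M$ and to solve it by a tube-domain version of Theorem \ref{t7.6}.

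\textbf{Setup on $M$ and on the tube.} In affine coordinates $x^i$ of $(M,D)$ we write $g_{ij}=\partial_i\partial_j\phi$ locally. We use the second Koszul form $\beta(g)=-\frac12 D\,d\log\det(g_{ij})$ and the Hessian curvature $\kappa(g)$ as in Section 6. A Hesse-Einstein metric $\tilde g=g+D\,du$ with $\beta(\tilde g)=-\tilde g$ then amounts to
\begin{equation*}
\det(g_{ij}+u_{ij})=e^{2u+h}\det(g_{ij}),
\end{equation*}
where $h$ is a smooth function with $\beta(g)+g=\frac12 D\,dh$ (after normalization). Globally one has to be careful here; I would take $\chi:=-2\beta(g)$, or a suitable combination. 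Since $\kappa(g)\geq cg$ corresponds to the "Ricci" of the associated K\"ahler metric being $\leq -c$, the form $\chi$ is uniformly equivalent to $g$, and $h$ may be taken to be $0$. This mirrors the reduction in Theorem \ref{t1.1}.

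Next we pass to the tangent bundle $TM$. It carries the complex structure $z^j=x^j+\sqrt{-1}y^j$, and the lift $g^T=g_{ij}(x)dz^i\,d\bar z^j$ is K\"ahler. The lifted curvature tensor and its derivatives are controlled by $\sum_{k\le4}|D^kg|_g$, because curvature of $g^T$ involves up to fourth derivatives of $\phi$, which are at most two derivatives of $g$ plus Christoffel terms. Completeness of $g^T$ follows from completeness of $g$ together with flatness in the fibre directions. Functions of $x$ only satisfy $\sqrt{-1}\partial\bar\partial u=\frac14 u_{ij}\,dz^i\wedge d\bar z^j$, so the real equation becomes a complex Monge-Amp\`{e}re equation of the form \eqref{e1.1} with $f=\log\lambda_1\cdots\lambda_n$ and $\epsilon=2$. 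The hypothesis $\kappa(g)\geq cg$ translates to ${\rm Ric}(g^T)\leq -c g^T$.

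\textbf{Solving and uniqueness.} We then apply Theorem \ref{t7.6} (or Theorem \ref{t1.1}) on $TM$ to get a unique solution $U$ with $\chi^T+\sqrt{-1}\partial\bar\partial U$ uniformly equivalent to $g^T$. The main obstacle is descending $U$ to $M$. The translations $y\mapsto y+a$ in the fibres are holomorphic isometries of $g^T$, and the data $\chi^T$, $h$ are invariant under them. Uniqueness in Theorem \ref{t7.6} therefore forces $U$ to be invariant, so $U=u(x)$. Strictly speaking, only local translations exist unless the affine structure has linear holonomy. I would handle this by working with the fibrewise translation action, which is globally defined on $TM$ as vertical translation by parallel sections. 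Alternatively, I would rerun the a priori estimates of Theorem \ref{t7.6} directly for $y$-independent functions, since the estimates are local and the maximum principle arguments (Omori-Yau on the complete $g^T$) only use bounded geometry. Either route gives $u\in C^\infty(M)$, and $\tilde g=\chi+D\,du$ is uniformly equivalent to $g$, hence complete, with $\beta(\tilde g)=-\tilde g$.

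For uniqueness, suppose $\tilde g_1,\tilde g_2$ are two complete Hesse-Einstein metrics. Lift both to $TM$. By the Yau-Schwarz-type argument, namely the Omori-Yau maximum principle applied to $\log(\det\tilde g_1/\det\tilde g_2)$, each is uniformly equivalent to the other. Uniqueness in Theorem \ref{t1.1} then gives $\tilde g_1=\tilde g_2$.
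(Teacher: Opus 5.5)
Your reduction matches the paper's. In the paper's normalization you take the background $\chi=-\beta(g)=2\kappa(g)$, which is uniformly equivalent to $g$, and lift to the K\"ahler tube $(TM,J_D,g^T)$. All the bounds come from $\sum_{k\le 4}|D^kg|_g$, and uniqueness comes from the Schwarz lemma applied to the lifted complete K\"ahler--Einstein metrics. One minor correction: Theorem \ref{t7.6} requires $F(\alpha^{i\bar p}\chi_{j\bar p})=0$, so the reference metric must be $\chi$ itself. Then $h=\log(\det g/\det\chi)\neq 0$, as in \eqref{e9.16}, and this $h$ is where the fourth derivatives of $g$ are needed.

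\textbf{The gap is the descent step of your main route.} A vertical translation $(x,y)\mapsto(x,y+V(x))$ is $J_D$-holomorphic only if the components $V^j$ are constant in affine coordinates, i.e.\ $DV=0$. So translations acting transitively on the fibres exist globally only when the linear holonomy of $D$ is trivial. Your remedy, ``vertical translation by parallel sections, which is globally defined,'' presupposes exactly this. For typical examples, such as quotients $\Omega/\Gamma$ of convex domains by discrete affine groups, there are no nonzero parallel vector fields. Uniqueness on $TM$ then says nothing about invariance under translations that are only locally defined.

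You can repair this on the universal cover $\widetilde M$, where $D$ has trivial holonomy:
\begin{itemize}
\item a parallel frame gives global holomorphic isometries $\tau_a$ of $(T\widetilde M,\widetilde{g}^T)$ that fix $\chi^T$ and $h\circ\pi$;
\item the lift of your solution $U$ is again a bounded solution in the uniformly equivalent class;
\item uniqueness in Theorem \ref{t7.6} then forces invariance under every $\tau_a$, hence $U=u\circ\pi$.
\end{itemize}
Repaired this way, your route uses Theorem \ref{t7.6} as a black box and needs no affine version of the continuity method. The cost is the covering argument and an actual proof that $g^T$ is complete; you assert this quickly, while the paper cites Lemma 6.1 of Jiao--Yin.

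Your alternative route is, once made precise, what the paper does, but as written it only covers the a priori estimates. The existence part of Theorem \ref{t7.6} does two further things: it solves the linearized equation by Dirichlet problems on compact domains, and it exhausts by domains $U_{\rho_0}$ built from the distance function. Carried out on $TM$, neither step preserves $y$-independence.

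The paper therefore runs the whole existence scheme on $M$. Lemma \ref{l9.1} performs the continuity method on an affine manifold of bounded geometry, using $TM$ only to import the estimates of Theorems \ref{t4.4} and \ref{t4.1} through $u\mapsto u\circ\pi$. Lemma \ref{l9.5} then removes the bounded-geometry assumption by exhausting $M$ with the Jiao--Yin domains $(U_{\rho_0},e^{2\tilde F}g)$ of Lemmas \ref{l9.3} and \ref{l9.4}, whose lifts to $TM$ depend only on $x$.
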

The rest of this paper is organized as follows. In Section 2, we provide some preliminaries which may be used later.
In Section 3, we establish the a priori estimates for \eqref{e1.1}. In Section 4, we prove Theorem \ref{t1.1} and \ref{t7.6}. In Section 5, we prove Theorem \ref{t1.4} and \ref{t1.5}. In Section 6, we prove Theorem \ref{t1.6}. We write an outline at the beginning of each section to introduce some key ideas.

\textbf{Acknowledgement.} I would like to thank Professor Jiao Heming and Wang Zhizhang suggesting him to read \cite{CY80} and to study the Monge-Amp\`{e}re type equations on noncompact Hermitian manifolds.

\section{Preliminaries}

In this section, we provide some basic results and preliminaries which may be used in the following sections. Some notations and conventions used in this paper will also be introduced at various points throughout this section.

First, we introduce the Chern connection, most of the materials can be found in \cite{LT20}.

$\bullet$
\begin{minipage}[t]{0.95\linewidth}
Let $(M,g)$ be a Hermitian manifold. Then \emph{Chern connection} of $g$ is defined as follows:  In local holomorphic
coordinates $z^i$, for a $T^{1,0}$ vector field $X=X^i \partial_i$ and a $T^{0,1}$ vector field $Y=Y^{\bar{i}} \partial_{\bar{i}}$, where $\partial_i:=\frac{\partial}{\partial z^i}$, $\partial_{\bar{i}}:=\frac{\partial}{\partial \bar{z}^i}$,
\[\nabla_k X^i=\partial_k X^i+\Gamma_{kj}^i X^j,\;\;\;\nabla_{\bar{k}}X^i=\partial_{\bar{k}}X^i\]
\[\nabla_k Y^{\bar{i}}=\partial_k Y^{\bar{i}},\;\;\;\nabla_{\bar{k}}Y^{\bar{i}}=\partial_{\bar{k}}Y^{\bar{i}}+\overline{\Gamma_{kj}^i}Y^{\bar{j}}\]
For a $(1,0)$ form $a=a_i {\rm d}z^i$ and a $(0,1)$ form $b_{\bar{i}}{\rm d}\bar{z^i}$,
\[\nabla_k a_i=\partial_k a_i-\Gamma_{ki}^j a_j,\;\;\;\nabla_{\bar{k}}a_i=\partial_{\bar{k}}a_i\]
\[\nabla_k b_{\bar{i}}=\partial_k b_{\bar{i}},\;\;\;\nabla_{\bar{k}}b_{\bar{i}}=\partial_{\bar{k}}b_{\bar{i}}-\overline{\Gamma^j_{ki}}b_{\bar{j}}\]
Here $\nabla_i:=\nabla_{\partial_i}$, and $\Gamma_{jk}^i$ are Christoffel symbols, with
\[\Gamma_{jk}^i=g^{i\bar{l}}\partial_j g_{k\bar{l}},\]
where $g^{i\bar{l}}g_{j\bar{l}}=\delta_{ij}$,
\[\delta_{ij}=\left\{ \begin{aligned}
&0, &\;\;\;i\neq j\\
&1,&\;\;\;i=j\\
\end{aligned} \right.\]
\end{minipage}

$\bullet$ The tensor operations in this paper follow the \emph{Einstein summation convention}.

$\bullet$
\begin{minipage}[t]{0.95\linewidth}
We extend covariant derivatives to act naturally on any type of tensor. For example, if $W$ is a tensor with components $W_{k}^{i\bar{j}}$, then define
\[\nabla_m W_{k}^{i\bar{j}}=\partial_m W_{k}^{i\bar{j}}+\Gamma_{ml}^i W_{k}^{l\bar{j}}-\Gamma_{mk}^l W_{l}^{i\bar{j}}\]
\[\nabla_{\bar{m}}W_{k}^{i\bar{j}}=\partial_{\bar{m}}W_{k}^{i\bar{j}}+\overline{\Gamma_{ml}^j}W_{k}^{i\bar{l}}\]
\end{minipage}

$\bullet$
\begin{minipage}[t]{0.95\linewidth}
The Hermitian metric $g$ defines a pointwise norm $|\cdot|_g$ on any tensor. For example, with $X,Y,a,b$ as above, we define
\[|X|_g^2=g_{i\bar{j}}X^i\overline{X^j},\;\;|Y|_g^2=g_{i\bar{j}}Y^{\bar{j}}\overline{Y^{\bar{i}}},\;\;|a|_g^2=g^{i\bar{j}}a_i\overline{a_j},\;\;|b|_g^2=g^{i\bar{j}}b_{\bar{j}}\overline{b_{\bar{i}}}.\]
This is extended to any type of tensor. For example, with $W$ as above, we define
\[|W|^2_g=g^{k\bar{l}}g_{i\bar{j}}g_{p\bar{q}}W_k^{i\bar{q}}\overline{W_l^{j\bar{p}}}.\]
\end{minipage}

$\bullet$
\begin{minipage}[t]{0.95\linewidth}
Noted that the Chern connection is a connection such that $\nabla g=\nabla J=0$ and the torsion has no $(1,1)$ component.  The torsion of $g$ is defined to be
\[T_{ij}^k=\Gamma_{ij}^k-\Gamma_{ji}^k.\]
We remark that $g$ is K\"{a}hler if and only if $T=0$. The associated \emph{Chern curvature tensor} for the metric $g$ is then defined as
\[R_{i\bar{j}k}^{\;\;\;\;\;l}=-\partial_{\bar{j}}\Gamma_{ik}^l.\]
In a local holomorphic coordinate system $\{z^1,\cdots,z^n\}$, we have
\[R_{i\bar{j}k\bar{l}}=-\frac{\partial^2g_{i\bar{j}}}{\partial z^k \partial \bar{z}^l}+g^{p\bar{q}}\frac{\partial g_{i\bar{q}}}{\partial z^k}\frac{\partial g_{p\bar{j}}}{\partial \bar{z}^l},\]
in this paper, we denote the curvature tensor $R_{i\bar{j}k\bar{l}}$ by $Rm$. The Chern-Ricci curvature is defined by
\[R_{i\bar{j}}=g^{k\bar{l}}R_{i\bar{j}k\bar{l}}=-\partial_i\partial_{\bar{j}}\log\det g.\]
Note that if $g$ is not K\"{a}hler, then $R_{i\bar{j}}$ may not equal to $g^{k\bar{l}}R_{k\bar{l}i\bar{j}}$.
\end{minipage}

The next lemma gives some relations between the Riemannian connection and the Chern connection.
\begin{lemma}\label{l2.1}
Suppose $|T|_g$ and $|\nabla T|_g$ are bounded. Then the Riemannian curvature is bounded if and only if the Chern curvature is bounded.
\end{lemma}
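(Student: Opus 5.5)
The plan is to compare the Levi-Civita connection $D$ of the underlying Riemannian metric with the Chern connection $\nabla$ through the difference tensor $S=D-\nabla$, and to show that $S$ is an explicit, pointwise linear expression in the torsion $T$ and its conjugate. The key identity is the standard one (see \cite{LT20}): for a Hermitian metric, the Levi-Civita connection equals the Chern connection plus a tensor built algebraically from $T$, $\overline{T}$ and $g$. In complex coordinates, the Christoffel symbols of $D$ have the form $\Gamma_{ij}^k-\frac12T_{ij}^k$ plus mixed terms like $\frac12 g^{k\bar l}\overline{T_{lm}^{p}}g_{p\bar j}$. I will write this as $D_XY=\nabla_XY+S(X,Y)$. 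The only facts I need are that $S$ is a real tensor with $|S|_g\le C|T|_g$ and $|\nabla S|_g\le C|\nabla T|_g$. The second bound holds because $\nabla g=0$, so differentiating $S$ only hits $T$ and $\overline{T}$.

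Next I express one curvature in terms of the other. For connections $D=\nabla+S$, the general formula is
\[
R^D(X,Y)Z=R^\nabla(X,Y)Z+(\nabla_XS)(Y,Z)-(\nabla_YS)(X,Z)+S(X,S(Y,Z))-S(Y,S(X,Z))+S(T^\nabla(X,Y),Z),
\]
where the last term comes from the torsion of $\nabla$; there are no further torsion terms because $D$ is torsion free. I will verify this by expanding $D_XD_YZ-D_YD_XZ-D_{[X,Y]}Z$ and using $[X,Y]=\nabla_XY-\nabla_YX-T^\nabla(X,Y)$. Taking norms with respect to $g$, which is the same norm for both connections, gives
\[
\big|\,|R^D|_g-|R^\nabla|_g\,\big|\le C\big(|\nabla T|_g+|T|_g^2\big).
\]
Here $R^\nabla$ is the Chern curvature, regarded as a real endomorphism-valued $2$-form. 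Its norm is comparable to $|Rm|_g$, because $\nabla$ preserves $J$ and its curvature is of type $(1,1)$, so all other components vanish. Under the hypothesis that $|T|_g$ and $|\nabla T|_g$ are bounded, the right-hand side is bounded, and the equivalence follows in both directions.

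The main obstacle is bookkeeping rather than analysis. I must make sure that converting between the real curvature tensor and the complex components $R_{i\bar jk\bar l}$ loses only constant factors. I must also check that $\nabla S$ really involves only $\nabla T$ and $\overline{\nabla T}$, with no hidden derivatives of the metric. I would handle both points by working in a local unitary frame at a point, where $g_{i\bar j}=\delta_{ij}$. In such a frame the real norm of $R^\nabla$ is a fixed constant multiple of $\sum|R_{i\bar jk\bar l}|^2$, and $S$ is given by constant-coefficient combinations of $T_{ij}^k$ and $\overline{T_{ij}^k}$.
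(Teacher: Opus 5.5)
Your argument is correct, and it is more than the paper provides: Lemma~\ref{l2.1} is stated there with no proof at all. It appears right after the remark that the material on the Chern connection comes from \cite{LT20}. It is used only through Lemma~\ref{l2.5} and Shi's exhaustion function in Lemma~\ref{l7.2}, to turn bounds on $T$, $\nabla T$ and the Chern curvature into a bound on the Riemannian sectional curvature. So there is no competing route to compare with.

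Your ingredients give a complete proof:
\begin{itemize}
\item the decomposition $D=\nabla+S$;
\item the curvature difference formula, including the $S(T^\nabla(X,Y),Z)$ term;
\item the pointwise estimate $\bigl|\,|R^D|_g-|R^\nabla|_g\bigr|\le C(n)\,(|\nabla T|_g+|T|_g^2)$;
\item the observation that the real norm of the Chern curvature is a fixed multiple of $\sum|R_{i\bar jk\bar l}|^2$, since it is $J$-linear and of type $(1,1)$.
\end{itemize}
The proof is in fact quantitative: the constants depend only on $n$ and the torsion bounds, which is exactly what those later applications need.

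Two cosmetic remarks. First, your displayed curvature identity holds for any two connections $D=\nabla+S$. The torsion of $D$ never enters, so the comment that no further torsion terms appear ``because $D$ is torsion free'' is beside the point.

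Second, your description of the mixed Christoffel symbols is only heuristic, and the indices and sign are off. It is cleaner to quote the contorsion formula, valid for the Levi-Civita connection minus any $g$-compatible connection with torsion $T$:
\[
g(S(X,Y),Z)=-\tfrac12\bigl(g(T(X,Y),Z)-g(T(Y,Z),X)+g(T(Z,X),Y)\bigr).
\]
From it, $|S|_g\le C|T|_g$ is immediate. Because $\nabla g=0$, $\nabla S$ is linear in $\nabla T$, so $|\nabla S|_g\le C|\nabla T|_g$. In complex coordinates, where the Chern torsion vanishes on mixed pairs, it gives $S_{ij}^k=-\tfrac12T_{ij}^k$ and $g(S(\partial_{\bar i},\partial_j),\partial_{\bar l})=-\tfrac12\overline{T_{li}^p}\,g_{j\bar p}$.
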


$\bullet$
\begin{minipage}[t]{0.95\linewidth}
For two real $(1,1)$-forms $\chi$ and $\chi'$ on $(M,g)$, we say that $\chi$ is uniformly equivalent to $\chi'$ if and only if
\[c^{-1}\chi'<\chi<c\chi'\]
for some constant $c>0$.
\end{minipage}

Next, we will introduce some definitions and propositions about complete noncompact complex manifolds. To facilitate subsequent discussions, we introduce the definition of bounded geometry, which was introduced by Cheng and Yau (see, for example, \cite{CY80}), originally to adapt the Schauder-type estimates
to solve PDEs on complete noncompact manifolds.
\begin{definition}\label{db2.1}
Suppose that $(M^n,g)$ is a complete Hermitian manifold. Let $k\ge 1$ be an integer and $0<\alpha<1$. $(M^n,g)$ is said to have\emph{ bounded geometry} of order $k+\alpha$ if there are positive numbers $r,\kappa_1,\kappa_2$ such that at every $p\in M$ there is a neighborhood $U_p$ of $p$, and  biholomorphism $\xi_p$ from $D(r):=\{z\in \mathbb{C}^n||z|<r\}$ onto $U_p$ with $\xi_p(0)=p$ satisfying the following properties:\\
\hspace*{0.5cm}(i) the pull back metric $\xi^{*}_p(g)$ satisfies:
\[\kappa_1 g_{\mathbb{C}^n}\le \xi^{*}_p(g)\le \kappa_2 g_{\mathbb{C}^n}\]
\hspace*{1cm}where $g_{\mathbb{C}^n}$ is the standard metric on $\mathbb{C}^n$;\\
\hspace*{0.4cm}(ii) the components $g_{i\bar{j}}$ of $\xi^{*}_p(g)$ in the natural coordinate of $D(r)\subset \mathbb{C}^n$ are
\hspace*{1cm}uniformly bounded in the standard $C^{k+\alpha}$ norm in $D(r)$ independent of $p$.\\
$g$ is said to have bounded geometry of infinity order if instead of $(ii)$ we have for any $k$, the $k$-$th$ derivatives of $g_{i\bar{j}}$ in $D(r)$ are bounded by a constant independent of $p$. We denote the covering holomorphic coordinate charts on $M$ with respect to $\{\xi_p(D(r));p\in M\}$ by $(V,(v^1,\cdots,v^n))$. $(M^n,g)$ is of \emph{quasi-bounded geometry} if $\xi_p$ is a nonsingular holomorphic map instead of a biholomorphism.

Fix a real $(1,1)$-form $\chi$ on $M$, we say that $\chi$ has bounded geometry of order $l$ with respect to $(M,g_{i\bar{j}})$ if and only if: In the coordinate chart $(V,(v^1,\cdots,v^n))$, the components $\chi_{i\bar{j}}$ of $\chi$ are uniformly bounded in the standard $C^{l+\alpha}$ norm in $(V,(v^1,\cdots,v^n))$ independent of $V$.
\end{definition}

Suppose $(M,g_{i\bar{j}})$ is a complete Hermitian manifold having bounded geometry of order $l+\beta$. Let $\{(V,(v^1,\cdots,v^n))\}$ be a family of holomorphic coordinate charts covering $M$ and satisfying the conditions of Definition \ref{db2.1}. Then, for any $u\in C^\infty(M)$, non-negative integer $k,k\leq l$, and $\alpha \in(0,1)$, we define norms $|u|_{k}$ and $|u|_{k+\alpha}$ to be
\begin{equation}
|u|_{k}=\sup_V\biggl(\sup_{z\in V}\biggl(\sum_{|\alpha|+|\beta|\leq k}\biggl|\frac{\partial^{|\alpha|+|\beta|}}{\partial v^\alpha \partial \bar{v}^\beta}u(z)\biggl|\biggl)\biggl)
\end{equation}
\begin{equation}\label{eb2.18}
\begin{aligned}
|u|_{k+\alpha}=&\sup_V\biggl(\sup_{z\in V}\biggl(\sum_{|\alpha|+|\beta|\leq k}\biggl|\frac{\partial^{|\alpha|+|\beta|}}{\partial v^\alpha \partial \bar{v}^\beta}u(z)\biggl|\biggl)\\
&+\sup_{z,z'\in V}\biggl(\sum_{|\alpha|+|\beta|= k}|z-z'|^{-\alpha}\biggl|\frac{\partial^{|\alpha|+|\beta|}}{\partial v^\alpha \partial \bar{v}^\beta}u(z)-\frac{\partial^{|\alpha|+|\beta|}}{\partial v^\alpha \partial \bar{v}^\beta}u(z')\biggl|\biggl)\biggl).
\end{aligned}
\end{equation}
The completion of $\{u\in C^\infty(M):|u|_{k+\alpha}<\infty\}$ with respect to $|\cdot|_{k+\alpha}$ is then a Banach space and will be denoted by $\tilde{C}^{k+\alpha}(M)$. Define
\begin{equation}
\tilde{C}^\infty(M)=\bigcap_{k=0}^\infty \tilde{C}^{k+\alpha}(M).
\end{equation}

\textbf{Remark}: By the Arzel\`{a}-Ascoli Theorem, it is obvious that $\tilde{C}^{k+\alpha}(M)\subset \{u\in C^k(M):|u|_{k}<\infty\}$. Therefore
\begin{equation}
\tilde{C}^\infty(M)=\bigcap_{k=0}^\infty \{u\in C^\infty(M):|u|_{k}<\infty\}.
\end{equation}

On the manifolds with bounded geometry, we have the generalized maximum principle (see \cite{CY80}), which is our main tool to derive the a priori estimates. For completeness, we state this in the following proposition.
\begin{proposition}\label{pb4.2}\emph{(Cheng-Yau)}
Suppose $(M,g_{i\bar{j}})$ is a complete Hermitian manifold. Suppose that, for any $x\in M$, there is an open set $D^x$ containing $x$ and non-negative function $\varphi^x:\overline{D^x}\rightarrow \mathbb{R}$ such that {\rm (i)} $\overline{D^x}$ is compact, {\rm (ii)} $\varphi^x_{(x)}=1$ and $\varphi^x=0$ on $\partial D^x$, {\rm (iii)} $\varphi^x\leq c$, $|\nabla \varphi^x|\leq c$ and $(\varphi_{i\bar{j}}^x)\geq -c(g_{i\bar{j}})$, where $c$ is a positive constant independent of $x$. Suppose $f$ is a function on $M$ which is bounded from above. Then there exists a sequence $\{x_i\}$ in $M$ such that $\lim f(x_i)=\sup f$, $\lim|df(x_i)|=0$ and $\overline{\lim}(f_{p\bar{q}}(x_i))\leq 0$, where the Hessian is taken with respect to $(g_{i\bar{j}})$.
\end{proposition}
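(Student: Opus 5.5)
The plan is to localize the supremum with the cut-off functions $\varphi^x$ and then apply the ordinary maximum principle on the compact sets $\overline{D^x}$. I assume $f\in C^2(M)$, since $df$ and $f_{p\bar q}$ appear in the conclusion. Set $S=\sup_M f<\infty$.

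\textbf{Step 1: a localized auxiliary function.} Fix $\varepsilon>0$ and choose $y\in M$ with $f(y)>S-\varepsilon$. Write $D=D^y$ and $\varphi=\varphi^y$ for the data attached to $y$ in the hypothesis. On $\overline{D}$ define
\[
G=\varphi\,(f-S+2\varepsilon).
\]
Then $G(y)>\varepsilon$ because $\varphi(y)=1$, and $G=0$ on $\partial D$. Since $\overline D$ is compact, $G$ attains its maximum at some point $z$, and $z$ is interior because $G(z)\ge G(y)>\varepsilon>0$.

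\textbf{Step 2: lower bounds at $z$.} From $G(z)>0$ and $\varphi\ge 0$ we get $\varphi(z)>0$ and $f(z)-S+2\varepsilon>0$. We also have $0<f(z)-S+2\varepsilon\le 2\varepsilon$, so
\[
\varphi(z)\ge \frac{G(z)}{2\varepsilon}\geq \frac12 .
\]
Using $\varphi\le c$, we obtain $f(z)-S+2\varepsilon\ge \varepsilon/c$, that is, $f(z)\ge S-2\varepsilon+\varepsilon/c$. It follows that $f(z)\to S$ as $\varepsilon\to0$.

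\textbf{Step 3: first and second order conditions.} The condition $dG(z)=0$ gives $\varphi\,df=-(f-S+2\varepsilon)\,d\varphi$ at $z$. Together with the bounds from Step 2 and $|d\varphi|\le c$, this yields
\[
|df(z)|\le \frac{2\varepsilon c}{1/2}=4c\varepsilon .
\]
The condition $(G_{p\bar q}(z))\le 0$ expands to
\[
\varphi f_{p\bar q}+f_p\varphi_{\bar q}+\varphi_p f_{\bar q}+(f-S+2\varepsilon)\varphi_{p\bar q}\le 0 .
\]
We divide by $\varphi(z)\ge \tfrac12$. The mixed gradient terms are bounded in matrix norm by $2|df||d\varphi|\le 8c^2\varepsilon$. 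By $(\varphi_{p\bar q})\ge -c\,(g_{p\bar q})$ and $0<f-S+2\varepsilon\le2\varepsilon$, the last term contributes at most $2c\varepsilon\,(g_{p\bar q})$. Hence
\[
(f_{p\bar q}(z))\le C\varepsilon\,(g_{p\bar q}),
\]
with $C$ depending only on $c$.

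\textbf{Step 4: conclusion.} Apply Steps 1--3 with $\varepsilon=1/i$ and let $x_i$ be the resulting point $z$. Then $f(x_i)\to S$, $|df(x_i)|\to0$ and $\overline{\lim}\,(f_{p\bar q}(x_i))\le0$, which is the claim.

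The step that needs the most care is Step 3. There the uniformity of the constant $c$ in $x$ and the lower bound $\varphi(z)\ge\tfrac12$ are essential, because they keep the error terms of order $\varepsilon$ independently of where $y$ lies in $M$.
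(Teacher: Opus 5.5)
Your proof is correct. The paper gives no proof of its own here and simply cites Cheng--Yau, but the localization it reproduces in Section 3 (see \eqref{eb4.37}--\eqref{eb4.39}) is the same idea as yours: take a near-maximizer $y$, use $\varphi^y$ to force an interior extremum on the compact set $\overline{D^y}$, and read off first- and second-order conditions with constants depending only on $c$. The only difference is the auxiliary function: the paper minimizes the quotient $(\sup f-f)/\varphi^y$, where the gradient cross terms cancel in $\log$ form but $f<\sup f$ is needed everywhere (hence its separate case \eqref{ne4.13}), whereas your shifted product $\varphi^y(f-\sup f+2\varepsilon)$ avoids that case split at the cost of needing $\varphi(z)\ge\tfrac12$ and the gradient bound to absorb the cross terms.
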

\begin{corollary}\label{pbb4.3}\emph{(Cheng-Yau)}
Suppose $(M,g_{i\bar{j}})$ is a complete Hermitian manifold with bounded geometry of order $l$, $l\geq 0$. Then the assertion of Proposition \ref{pb4.2} is valid on $(M,g_{i\bar{j}})$.
\end{corollary}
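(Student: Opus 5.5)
The plan is to reduce Corollary \ref{pbb4.3} to Proposition \ref{pb4.2}: for every $x\in M$ we build a pair $(D^x,\varphi^x)$ satisfying (i)--(iii) with a constant $c$ independent of $x$. All the uniformity comes from Definition \ref{db2.1}. Fix $x\in M$ and take the chart $\xi_x:D(r)\to U_x$ with $\xi_x(0)=x$. Set $D^x=\xi_x(D(r/2))$. Its closure $\xi_x(\overline{D(r/2)})$ is compact, being the image of a compact set, so (i) holds. On $D(r)$ define
\[
\psi(z)=\Big(1-\frac{4|z|^2}{r^2}\Big)^2 ,
\]
and set $\varphi^x=\psi\circ\xi_x^{-1}$ on $\overline{D^x}$. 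Then $\varphi^x\ge 0$, $\varphi^x(x)=\psi(0)=1$, and $\varphi^x=0$ on $\partial D^x=\xi_x(\{|z|=r/2\})$, which gives (ii). In the quasi-bounded case, where $\xi_x$ is only a local biholomorphism, the same construction works on $D(r)$ itself, and one transfers the conclusion through the covering map. Here I would argue as Cheng--Yau do and apply the maximum principle argument upstairs. In the biholomorphic case this is not needed.

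For (iii), work in the coordinates $z$ and write $t=4|z|^2/r^2$. First, $0\le\psi\le 1$. Next, the Euclidean gradient satisfies $|\partial\psi|\le C(r)$ on $D(r/2)$. Condition (i) of Definition \ref{db2.1} gives $\kappa_1 g_{\mathbb{C}^n}\le \xi_x^*g\le\kappa_2 g_{\mathbb{C}^n}$. Hence
\[
|\nabla\varphi^x|_g^2=g^{i\bar j}\psi_i\psi_{\bar j}\le \kappa_1^{-1}|\partial\psi|^2\le C .
\]
The Hessian bound is where the choice of connection matters. The complex Hessian $\varphi_{i\bar j}=\partial_i\partial_{\bar j}\varphi$ needs no Christoffel symbols for the Chern connection, since $\nabla_{\bar j}$ acting on the $(1,0)$-form $\partial\varphi$ is just $\partial_{\bar j}$. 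A direct computation gives
\[
\psi_{i\bar j}=-\frac{8}{r^2}(1-t)\delta_{ij}+\frac{32}{r^4}\bar z_i z_j ,
\]
and the second term is a nonnegative Hermitian matrix. So $(\psi_{i\bar j})\ge -\frac{8}{r^2}(\delta_{ij})\ge -\frac{8}{r^2\kappa_1}(g_{i\bar j})$. All constants depend only on $r,\kappa_1,\kappa_2$, so they are uniform in $x$, and Proposition \ref{pb4.2} applies.

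The only real subtlety is the meaning of ``Hessian'' in Proposition \ref{pb4.2}. I expect that to be the main point to check. Two things must hold. First, the Hessian must be the complex Hessian $\partial\bar\partial$, and for the Chern connection it is coordinate-free; the full real Hessian would involve Christoffel symbols, which order $l=0$ does not control. Second, the conclusion, and therefore the proof of the Proposition, must only ever use the $(1,1)$-part. I would confirm this by recalling the proof of the Proposition. One applies the ordinary maximum principle to $f\varphi^x$, after shifting so that $f>0$ near the relevant points. This is done at points $x$ where $f(x)>\sup f-\varepsilon$. At the interior maximum $y$ of $f\varphi^x$ on $\overline{D^x}$, the first-order condition gives $df=-f\,d\varphi^x/\varphi^x$. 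The second-order condition $(f\varphi^x)_{i\bar j}\le 0$ then gives
\[
f_{i\bar j}\le \varphi^{-1}\big(-f\varphi_{i\bar j}+\text{gradient terms}\big).
\]
Bounding the right-hand side needs exactly the constants in (iii) together with $\varphi^x(y)\ge\varphi^x(x)(\sup f-\varepsilon)/\sup f$. No derivative of the metric beyond order zero enters.
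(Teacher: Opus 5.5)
Your argument is correct and is essentially the standard Cheng--Yau argument the paper relies on. The paper cites the corollary without proof, but in Section 3 it uses exactly such bump functions $\varphi^{p_i}$ built in bounded-geometry charts. As you note, only condition (i) of Definition \ref{db2.1} is needed, and your $\psi=(1-4|z|^2/r^2)^2$ on $\xi_x(D(r/2))$ gives (i)--(iii) with constants depending only on $r$ and $\kappa_1$.

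A side remark that does not affect the corollary: in your sketch of Proposition \ref{pb4.2}, the shift must depend on $\varepsilon$ (for example, work with $f-\sup f+2\varepsilon$). With a fixed shift you only get bounds, not $|df(y)|\to 0$ and $\overline{\lim}\, f_{p\bar q}\le 0$.
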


\begin{lemma}\label{l2.5}
Suppose $(M,g_{i\bar{j}})$ is a complete Hermitian manifold such that $|T|_g$, $|\nabla T|_g$ and $|Rm|_g$ are bounded. Then the assertion of Proposition \ref{pb4.2} is valid on $(M,g_{i\bar{j}})$.
\end{lemma}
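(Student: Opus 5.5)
To prove Lemma \ref{l2.5}, the plan is to check the hypotheses of Proposition \ref{pb4.2} directly. For each $x\in M$ we need a compactly supported bump function $\varphi^x$ whose gradient and complex Hessian are bounded uniformly in $x$. The natural candidate is built from the Riemannian distance function $r_x=d_g(x,\cdot)$. Fix a smooth nonincreasing cutoff $\eta:\mathbb{R}\to[0,1]$ with $\eta=1$ on $(-\infty,1/2]$, $\eta=0$ on $[1,\infty)$, and $|\eta'|+|\eta''|\leq C$. Set $D^x=B_1(x)$ and $\varphi^x=\eta(r_x)$. Since $g$ is complete, $\overline{D^x}$ is compact, $\varphi^x(x)=1$, $\varphi^x=0$ on $\partial D^x$, and $0\leq\varphi^x\leq 1$. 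Also $|\nabla\varphi^x|\leq C$, because $r_x$ is Lipschitz with constant $1$. What remains is the lower bound $(\varphi^x_{i\bar j})\geq -c(g_{i\bar j})$.

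First we pass from the Chern data to Riemannian data. By Lemma \ref{l2.1}, the bounds on $|T|_g$, $|\nabla T|_g$ and $|Rm|_g$ imply that the Riemannian sectional curvature is bounded, say $|\mathrm{Sec}|\leq K$. The Hessian comparison theorem then gives $\mathrm{Hess}^{LC}\, r_x\leq \sqrt K\coth(\sqrt K r_x)\, g$ wherever $r_x$ is smooth, and this is bounded by a constant on $\{1/2\leq r_x\leq 1\}$. Because $\eta'\leq 0$, we get
\[
\mathrm{Hess}^{LC}\varphi^x=\eta''\,dr_x\otimes dr_x+\eta'\,\mathrm{Hess}^{LC} r_x\geq -C g .
\]
Next we compare the complex Hessian $\varphi_{i\bar j}=\partial_i\partial_{\bar j}\varphi$ with the Riemannian one. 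For a real function $\varphi$, $\sqrt{-1}\partial\bar\partial\varphi$ evaluated on $(X,JX)$ equals $\tfrac12\big(\mathrm{Hess}^{LC}\varphi(X,X)+\mathrm{Hess}^{LC}\varphi(JX,JX)\big)$ plus a term of the form $(\nabla^{LC}J)(\cdot)\cdot d\varphi$. Since $\nabla^{LC}J$ is controlled by the torsion $T$, this correction is bounded by $C|T|_g|\nabla\varphi|\leq C$. Hence $(\varphi^x_{i\bar j})\geq -c(g_{i\bar j})$ with $c$ independent of $x$.

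The main obstacle is that $r_x$ is not smooth at the cut locus, and $B_1(x)$ may meet the cut locus of $x$ because the injectivity radius is not assumed to be bounded below. I would handle this in one of two ways. The first is to work in barrier or viscosity form: the maximum-principle argument of Proposition \ref{pb4.2} only uses $\varphi^x$ at the point where $f\varphi^x$ (or a similar product) attains its maximum, and at cut points Calabi's trick provides a smooth upper support function satisfying the same Hessian bound. The cleaner alternative is to avoid the cut locus altogether by lifting through the exponential map. On the ball of radius $\pi/(2\sqrt K)$ in $T_xM$, the pullback metric $\exp_x^*g$ has no conjugate points, so there $r_x$ is smooth and Hessian comparison holds everywhere. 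Since $\exp_x$ is only a local diffeomorphism, this is exactly the quasi-bounded-geometry situation, and I expect Cheng--Yau's argument to apply verbatim on the lifted ball, with $J$ pulled back to give an integrable complex structure. If needed, I would also rescale the radius $1$ to $\min(1,\pi/(4\sqrt K))$ so that all the constants depend only on $K$ and $\sup|T|$.
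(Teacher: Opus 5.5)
Your proof is correct, but it takes a different route from the paper. The paper never uses the distance function. It combines Lemma \ref{l2.1} with Shi's exhaustion function (Lemma \ref{l7.2}): a globally smooth $\tilde\rho$, comparable to $\rho$, with $|\nabla\tilde\rho|+|\nabla^2\tilde\rho|\le C$. It then takes $\varphi^x=-\tilde\rho+\tilde\rho(x)+1$ on $D^x=\{\varphi^x>0\}$. Because $\tilde\rho$ is smooth, the hypotheses of Proposition \ref{pb4.2} are checked directly and the cut locus never appears.

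You instead build local bumps $\eta(r_x)$ from Hessian comparison. You also make explicit the torsion correction $(\nabla^{LC}J)\ast d\varphi$ relating the real and complex Hessians, which the paper leaves implicit. You deal with the non-smoothness of $r_x$ either by Calabi's trick or by lifting through $\exp_x$, and both remedies work. Note, though, that neither one verifies the hypotheses of Proposition \ref{pb4.2} as stated; you are re-running its proof at the minimum point $y$ of $(L-f)/\varphi^x$.

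\begin{itemize}
\item In the Calabi version, what you need at $y$ is a smooth \emph{lower} support function for $\varphi^x$. This is an upper support function for $r_x$, which Calabi's trick supplies, and it transfers to $\varphi^x$ because $\eta'\le 0$.
\item In the lifted version, what you need is that $\exp_x$ is a local isometry and a local biholomorphism for the pulled-back metric and complex structure.
\end{itemize}

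Your route buys two things: it is more elementary (no Shi-type smoothing input), and it gives $0\le\varphi^x\le 1$. The second point matters. The paper's $\varphi^x$ is not uniformly bounded: near the base point of $\rho$ it has size about $\rho(x)$. So the condition $\varphi^x\le c$ in (iii), which is exactly what forces $f(x_i)\to\sup f$ in the Cheng--Yau argument (compare \eqref{eb4.37}), is not literally met there.

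The paper's construction is easily repaired. One option is $\varphi^x=1-(\tilde\rho-\tilde\rho(x))^2$ on $\{|\tilde\rho-\tilde\rho(x)|<1\}$. Another is to compose a suitable cutoff with Shi's function based at $x$, whose constants depend only on $n$ and the curvature bound. The second option would also let you drop the cut-locus discussion from your own proof entirely.
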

\begin{proof}
By Lemma \ref{l2.1} and \ref{l7.2}, there exists a exhaustion function $\tilde{\rho}$ on $(M,g_{i\bar{j}})$ as in Lemma \ref{l7.2}. Then for any $x\in M$, define $\varphi^x:=-\tilde{\rho}+\tilde{\rho}(x)+1$ and $D^x:=\{y\in M|\varphi^x(y)>0\}$, where $\varphi^x$ and $D^x$ are as in Proposition \ref{pb4.2}.
\end{proof}

\section{The a priori estimates for \eqref{e1.1}}

In this section, we consider the a priori estimates of \eqref{e1.1} on Hermitian manifolds with bounded geometry, most of the calculations in this section can be found in my preprint \cite{Yin25}, but for the convenience of the reader, we provide a complete proof. The main difficulty lies in the $C^{2}$ estimates. Since a bounded function on a noncompact manifold may not attain its extremum at some point, the key idea of this section is to apply the generalized maximum principle (see \eqref{eb4.37}, \eqref{eb4.38} and \eqref{eb4.39}), rather than the classical maximum principle, to establish the $C^{2}$ estimates. We use some of the language and approaches of Sz\'{e}kelyhidi-Tosatti-Weinkove \cite{STW17}.

Let $(M,\alpha)$ be a Hermitian manifold with bounded geometry of order 2 of complex dimension $n$ and write
\begin{equation}
\alpha=\sqrt{-1}\alpha_{i\bar{j}}dz^i\wedge d\bar{z}^j>0.
\end{equation}
Fix a background $(1,1)$-form $\chi=\sqrt{-1}\chi_{i\bar{j}}dz^i\wedge d\bar{z}^j$ which is not necessarily positive definite. For $u:M\rightarrow \mathbb{R}$, define a new tensor $g_{i\bar{j}}$ by
\begin{equation}\label{e4.3}
g_{i\bar{j}}:=\chi_{i\bar{j}}+u_{i\bar{j}}.
\end{equation}
And we can define the endomorphism of $T^{1,0}M$ given by $A_j^i=\alpha^{i\bar{p}}g_{j\bar{p}}$.

In the following, we consider the a priori estimates of the equations for $u\in C^4(M)\cap \{u\in C^2(M)|\;|u|_2<\infty\}$ as follows:
\begin{equation}\label{en4.3}
F(A)=h(x)+\epsilon u
\end{equation}
for a given function $h\in \{h\in C^2(M)|\;|h|_2<\infty\}$ on $M$, where $\epsilon\in[0,1]$ is a constant, and
\begin{equation}
F(A)=f(\lambda_1,\ldots,\lambda_n)
\end{equation}
is a smooth symmetric function of the eigenvalues of $A$, and the equation \eqref{en4.3} satisfies conditions \textbf{(a1)}-\textbf{(a4)} and $\chi$ has bounded geometry of order 2 with respect to $(M,\alpha)$.

Note that, if $\underline{u}$ is a $\mathcal{C}$-subsolution and $\underline{u}\in \tilde{C}^{\infty}(M)$, then replacing $\chi$ by
\begin{equation}
\chi'_{i\bar{j}}=\chi_{i\bar{j}}+\underline{u}_{i\bar{j}},
\end{equation}
it is easy to see $\chi'$ also has bounded geometry of order 2. We may assume that $\underline{u}=0$. The important consequence of $0$ being a $\mathcal{C}$-subsolution is the Proposition 2.3 of \cite{STW17}.

Our main goal is the following estimate:
\begin{equation}\label{eb4.17}
\sup_M|\sqrt{-1}\partial\bar{\partial}u|_\alpha\leq C\biggl(\sup_M|\nabla u|_\alpha^2+1\biggl).
\end{equation}
In fact, the estimate \eqref{eb4.17} is equivalent to the bound
\begin{equation}
\lambda_1\leq CK,
\end{equation}
where $K=\sup_M|\nabla u|_\alpha^2+1$ and $\lambda_1$ is the largest eigenvalue of $A=(A_j^i)=(\alpha^{i\bar{p}}g_{j\bar{p}})$. Indeed, our assumption on the cone $\Gamma$ implies that $\sum_i \lambda_i>0$ (see \cite{STW17}). Then, if $\lambda_1$ is bounded from above by $CK$, so is $|\lambda_i|$ for all $i$, giving the same bound for $\sup_M|\sqrt{-1}\partial\bar{\partial}u|_\alpha$.

The generalized maximum principle on non-compact manifolds (see Proposition \ref{pb4.2}) are our main tools in this section.

To use the generalized maximum principle, in the sequel, we assume that $u\in C^4(M)\cap \{u\in C^2(M)|\;|u|_2<\infty\}$. We consider the function
\begin{equation}
H=\log \lambda_1+\phi(|\nabla u|_\alpha^2)+\psi(u),
\end{equation}
where $\phi$ is defined by
\begin{equation}
\phi(t)=-\frac{1}{2}\log \biggl(1-\frac{t}{2K}\biggl),
\end{equation}
so that $\phi(|\nabla u|_\alpha^2)\in [0,\frac{1}{2}\log 2]$ satisfies
\begin{equation}
\frac{1}{4K}<\phi'<\frac{1}{2K},\;\;\;\;\phi''=2(\phi')^2,
\end{equation}
and $\psi$ is defined by
\begin{equation}
\psi(t)=D_1e^{-D_2 t},
\end{equation}
for sufficiently large uniform constants $D_1,D_2>0$ to be chosen later. By the $C^0$ bound on $u$, the quantity $\psi(u)$ is uniformly bounded.


The quantity $H$ need not be smooth at any point, because the largest eigenvalue of $A$ may have eigenspace of dimension larger than 1. Therefore, we can not apply Proposition \ref{pb4.2} to $H$ directly. Since $u\in C^4(M)\cap \{u\in C^2(M)|\;|u|_2<\infty\}$, $H$ is bounded from above in $M$, there exists a a sequence $\{p_i\}$ in $M$ such that $\lim H(p_i)=\sup H$. If $H$ achieves its maximum at some point $p\in M$, then we can use the same method as in \cite{STW17} to obtain the estimate \eqref{eb4.17}. Hence in the following, we assume that
\begin{equation}\label{ne4.13}
H<\sup H \;\;{\rm on} \;\;M.
\end{equation}

For some $i$, we work at the point $p_i$, where $i$ is a sufficiently large positive integer to be chosen later. In orthonormal complex coordinates for $\alpha$ centered at this point, such that $g$ is diagonal and $\lambda_1=g_{1\bar{1}}$. To take care of this, we carry out a perturbation as in \cite{S18}, choosing local coordinates such that $H$ achieves $H(p_i)$ at the origin, where $A$ is diagonal with eigenvalues $\lambda_1\geq \ldots \geq\lambda_n$. We fix a diagonal matrix $B$ independent of $p_i$, with $B_1^1=0$ and $0<B_2^2<\ldots<B^n_n$, and we define $\tilde{A}=A-B$, denoting its eigenvalues by $\tilde{\lambda}_1,\ldots,\tilde{\lambda}_n$.

At the origin, we have
\begin{equation}
\tilde{\lambda}_1=\lambda_1\;\;\;\;{\rm and}\;\;\;\;\tilde{\lambda}_i=\lambda_i-B_i^i,\;\;i>1,
\end{equation}
and $\tilde{\lambda}_1>\tilde{\lambda}_2>\ldots>\tilde{\lambda}_n$. As discussed above, our assumption on the cone $\Gamma$ implies that $\sum_i \lambda_i>0$, and we fix the matrix $B$ small enough so that
\begin{equation}
\sum_i \tilde{\lambda}_i>-1.
\end{equation}
We can choose this matrix $B$ in such a way that, in addition,
\begin{equation}\label{eb4.25}
\sum_{p>1}\frac{1}{\lambda_1-\tilde{\lambda}_p}\leq C,
\end{equation}
for some fixed constant $C$ depending on the dimension $n$. Next, we will show that $\tilde{\lambda}_i$ is smooth on $B_r(0)$ for some $r>0$ independent of $p_i$. Define
\begin{equation}\label{eb4.26}
G(\lambda,\tilde{A}(x))=\det(\lambda I-\tilde{A}(x)).
\end{equation}
Obviously, by \eqref{eb4.25} and \eqref{eb4.26}, we have
\begin{equation}\label{eb4.27}
\begin{aligned}
\frac{\partial G(\lambda,\tilde{A}(x))}{\partial \lambda}\biggl|_{x=0,\lambda=\tilde{\lambda}_1(0)}&=\prod_{i=2}^{n}({\lambda}_1(0)-\tilde{\lambda}_i(0))\\
&\geq \biggl(\frac{n-1}{\sum_{p>1}\frac{1}{\lambda_1(0)-\tilde{\lambda}_p(0)}}\biggl)^{n-1}\\
&\geq \biggl(\frac{n-1}{C}\biggl)^{n-1}.
\end{aligned}
\end{equation}
By implicit function theorem, there exists a smooth function $\lambda=\lambda^1(\tilde{A})$ such that
\begin{equation}\label{eb4.28}
G(\lambda^1(\tilde{A}),\tilde{A})=0\;\;\;\;{\rm and }\;\;\;\;\lambda^1(\tilde{A}(0))=\lambda_1(0)
\end{equation}
on a neighborhood $B_{r'}(A(0))$ of $A(0)$($A$ can be seen as a vector on $\mathbb{C}^{n\times n}$). Since the matrix $A$ depends only on $\chi_{i\bar{j}}$, $u_{i\bar{j}}$ and $\alpha_{i\bar{j}}$, we can assume that $r'$ is a positive constant independent of $p_i$ and on $B_{r''}(0)\subseteq M$ we also have \eqref{eb4.28} for some $r''>0$ independent of $p_i$. Similarly, we obtain $\lambda=\lambda^i(\tilde{A})$ such that
\begin{equation}\label{eb4.29}
G(\lambda^i(\tilde{A}),\tilde{A})=0\;\;\;\;{\rm and }\;\;\;\;\lambda^i(\tilde{A}(0))=\tilde{\lambda}_i(0)
\end{equation}
on $B_{r''}(0)\subseteq M$. Differentiating the \eqref{eb4.29}, we get
\begin{equation}
dG(\lambda^i(\tilde{A}),\tilde{A})=\frac{\partial G}{\partial \lambda^i}d\lambda^i+\sum_{p,q=1}^n\frac{\partial G}{\partial A_p^q}dA_p^q=0.
\end{equation}
So $d\lambda^i$ can be small enough if $r''\rightarrow 0$, we have a constant $r>0$ independent of $p_i$ such that
\begin{equation}
\lambda^1(\tilde{A}(x))>\lambda^2(\tilde{A}(x))>\ldots>\lambda^n(\tilde{A}(x))
\end{equation}
on $B_r(0)$, so $\tilde{\lambda}_1=\lambda^1(\tilde{A}(x))$ is smooth on $B_r(0)$.

Now, after possibly shrinking the chart, the quantity
\begin{equation}
\widetilde{H}=\log \tilde{\lambda}_1+\phi(|\nabla u|_\alpha^2)+\psi(u)
\end{equation}
is smooth on the chart $B_r(p_i)$, in the sequel, we compute in this chart. Let $\sup H=L$, by \eqref{ne4.13}, we have $L-\widetilde{H}>0$ on $M$. Let $\{p_i\}$ be a sequence in $M$ as above such that $\lim H(p_i)=\sup H$. Since $(M,\alpha)$ has bounded geometry of order 2, then there is a non-negative function $\varphi^{p_i}:B_r(p_i)\rightarrow \mathbb{R}$ such that {\rm (i)} $\varphi^{p_i}_{(p_i)}=1$ and $\varphi^{p_i}=0$ on $\partial B_r(p_i)$, {\rm (ii)} $\varphi^{p_i}\leq c$, $|\nabla \varphi^{p_i}|\leq c$ and $(\varphi_{i\bar{j}}^{p_i})\geq -c(\alpha_{i\bar{j}})$, where $c$ is a positive constant independent of $i$. Consider $(L-\widetilde{H})/\varphi^{p_i}$ as a function defined on $B_r(p_i)$. Let $x_i\in B_r(p_i)$ be such that
\begin{equation}
\biggl(\frac{L-\widetilde{H}}{\varphi^{p_i}}\biggl)(x_i)=\inf_{B_r(p_i)}\biggl(\frac{L-\widetilde{H}}{\varphi^{p_i}}\biggl).
\end{equation}
Then,
\begin{equation}
\frac{L-\widetilde{H}}{\varphi^{p_i}}(x_i)\leq \frac{L-\widetilde{H}}{\varphi^{p_i}}(p_i)=L-H(p_i),
\end{equation}
\begin{equation}
\frac{d(L-\widetilde{H})}{L-\widetilde{H}}(x_i)=\frac{d\varphi^{p_i}}{\varphi^{p_i}}(x_i),
\end{equation}
\begin{equation}
\biggl(\frac{(L-\widetilde{H})_{p\bar{q}}}{L-\widetilde{H}}(x_i)\biggl)\geq \biggl(\frac{\varphi^{p_i}_{p\bar{q}}}{\varphi^{p_i}}(x_i)\biggl).
\end{equation}
Using the property of $\varphi^{p_i}$ we see that
\begin{equation}\label{eb4.37}
0<L-\widetilde{H}(x_i)\leq c(L-H(p_i)),
\end{equation}
\begin{equation}\label{eb4.38}
|d\widetilde{H}(x_i)|\leq c(L-H(p_i)),
\end{equation}
\begin{equation}\label{eb4.39}
(\widetilde{H}_{p\bar{q}}(x_i))\leq c(L-H(p_i))(\alpha_{p\bar{q}})
\end{equation}

We will apply \eqref{eb4.37}, \eqref{eb4.38} and \eqref{eb4.39} to $\widetilde{H}$ instead of the classical maximum principles. Our goal is to obtain the bounded $\tilde{\lambda}_1\leq CK$ at $x_i$, which will give us the required estimate \eqref{eb4.17}. Hence, we may and do assume that $\tilde{\lambda}_1\gg K$ at this point. We now differentiate $\widetilde{H}$ at $x_i$ and, we use subscripts $k$ and $\bar{l}$ to denote the partial derivatives $\partial/\partial z^k$ and $\partial/\partial \bar{z}^l$. We have
\begin{equation}\label{eb4.40}
\begin{aligned}
\widetilde{H}_k&=\frac{\tilde{\lambda}_{1,k}}{\tilde{\lambda}_1}+\phi'(\alpha^{p\bar{q}}u_p u_{\bar{q}k}+\alpha^{p\bar{q}}u_{pk}u_{\bar{q}}+(\alpha^{p\bar{q}})_k u_p u_{\bar{q}})+\psi'u_k\\
&=\frac{\tilde{\lambda}_{1,k}}{\tilde{\lambda}_1}+\phi'(u_p u_{\bar{p}k}+u_{pk}u_{\bar{p}}+(\alpha^{p\bar{q}})_k u_p u_{\bar{q}})+\psi'u_k\\
&=\frac{\tilde{\lambda}_{1,k}}{\tilde{\lambda}_1}+\phi'V_k+\psi' u_k,
\end{aligned}
\end{equation}
where $V_k:=u_p u_{\bar{p}k}+u_{pk}u_{\bar{p}}+(\alpha^{p\bar{q}})_k u_p u_{\bar{q}}$.

Next, we will only make a small change to the proof of Theorem 2.2 in \cite{STW17}.
One difference is that when we estimate the term $F^{kk}g_{k\bar{k}1\bar{1}}$, we need to apply $\nabla_j$ and $\nabla_{\bar{j}}$ to the equation \eqref{en4.3} and set $j=1$, since $h\in \{h\in C^2(M)|\;|h|_2<\infty\}$ and assuming $\lambda_1\gg \sup_M|\nabla u|_\alpha^2+1$, we have
\begin{equation}\label{e47.20}
\begin{aligned}
F^{pq,rs}\nabla_1 g_{q\bar{p}}\nabla_{\bar{1}}g_{s\bar{r}}+F^{kk}\nabla_{\bar{1}}\nabla_1g_{k\bar{k}}&=h_{1\bar{1}}+\epsilon u_{1\bar{1}}\\
&\geq -C\mathcal{F}\lambda_1.
\end{aligned}
\end{equation}
By using \eqref{e47.20}, we can also get the inequality (3.18) in \cite{STW17}.

If $i$ is large enough(with respect to $x_i$), by (3.28) of \cite{STW17} and \eqref{eb4.39}, we have
\begin{equation}\label{eb4.41}
\begin{aligned}
0\geq -&\frac{F^{pq,rs}\nabla_1 g_{q\bar{p}}\nabla_{\bar{1}}g_{s\bar{r}}}{\lambda_1}-\frac{F^{kk}|\tilde{\lambda}_{1,k}|^2}{\lambda_1^2}\\
&+\sum_p \frac{F^{kk}}{6K}(|u_{pk}|^2+|u_{p\bar{k}}|^2)+\phi''F^{kk}|V_k|^2\\
&+\psi''F^{kk}|u_k|^2+\psi'F^{kk}u_{k\bar{k}}-C(F^{kk}\lambda_1^{-1}|g_{1\bar{1}k}|+\mathcal{F}).
\end{aligned}
\end{equation}
where $\nabla$ is the Chern connection of $\alpha$, and $\mathcal{F}=\sum_k F^{kk}$.

We now deal with two cases separately, depending on a small constant $\delta=\delta_{D_1,D_2}>0$ to be determined shortly, and which will depend on the constants $D_1$ and $D_2$.

\emph{Case} 1. Assume $\delta \lambda_1\geq -\lambda_n$. Define the set
\begin{equation}
I=\{i:F^{ii}>\delta^{-1}F^{11}\}.
\end{equation}
From \eqref{eb4.40} and Cauchy-Schwarz inequality, we get
\begin{equation}\label{eb4.43}
\begin{aligned}
-\sum_{k\notin I}\frac{F^{kk}|\tilde{\lambda}_{1,k}|^2}{\lambda_1^2}&=-\sum_{k\notin I}F^{kk}|\phi'V_k+\psi'u_k-\widetilde{H}_k|^2\\
&=-(\phi')^2\sum_{k\notin I}F^{kk}|V_k|^2-(\psi')^2\sum_{k\notin I}F^{kk}|u_k|^2-\sum_{k\notin I}F^{kk}|\widetilde{H}_k|^2\\
&\;\;\;\;-2\sum_{k\notin I}F^{kk}{\rm Re}(\phi'V_k\overline{\psi'u_k})+2\sum_{k\notin I}F^{kk}{\rm Re}(\widetilde{H}_k\overline{\psi'u_k})\\
&\;\;\;\;+2\sum_{k\notin I}F^{kk}{\rm Re}(\widetilde{H}_k\overline{\phi'V_k)}\\
&\geq-(\phi')^2\sum_{k\notin I}F^{kk}|V_k|^2-(\psi')^2\sum_{k\notin I}F^{kk}|u_k|^2-\sum_{k\notin I}F^{kk}|\widetilde{H}_k|^2\\
&\;\;\;\;-\frac{2}{3}(\phi')^2\sum_{k\notin I}F^{kk}|V_k|^2-\frac{3}{2}(\psi')^2\sum_{k\notin I}F^{kk}|u_k|^2\\
&\;\;\;\;-\frac{1}{3}(\phi')^2\sum_{k\notin I}F^{kk}|V_k|^2
-\frac{1}{2}(\psi')^2\sum_{k\notin I}F^{kk}|u_k|^2-C\sum_{k\notin I}F^{kk}|\widetilde{H}_k|^2\\
&\geq -\phi''\sum_{k\notin I}F^{kk}|V_k|^2-3(\psi')^2\delta^{-1}F^{11}K-C\sum_{k\notin I}F^{kk}|\widetilde{H}_k|^2.
\end{aligned}
\end{equation}

For $k\in I$ we have, in the same way,
\begin{equation}\label{eb4.44}
-2\delta\sum_{k\in I}\frac{F^{kk}|\tilde{\lambda}_{1,k}|^2}{\lambda_1^2}\geq -2\delta\phi''\sum_{k\in I}F^{kk}|V_k|^2-6\delta(\psi')^2\sum_{k\in I}F^{kk}|u_k|^2-C\sum_{k\in I}F^{kk}|\widetilde{H}_k|^2.
\end{equation}
We wish to use some of the good $\psi''F^{kk}|u_k|^2$ term in \eqref{eb4.41} to control the last term in \eqref{eb4.44}. For this, we assume that $\delta$ is chosen so small (depending on $\psi$, i.e. on $D_1, D_2$ and the maximum of $|u|$), such that
\begin{equation}\label{eb4.45}
6\delta(\psi')^2<\frac{1}{2}\psi''.
\end{equation}
Since $\psi''$ is strictly positive, such a $\delta>0$ exists.

Using this together with \eqref{eb4.43} and \eqref{eb4.44} in \eqref{eb4.41}, and we can choose some $i$ such that $|\widetilde{H}_k(x_i)|$ is sufficiently small by \eqref{eb4.38}, then we have
\begin{equation}
\begin{aligned}
0\geq -&\frac{F^{pq,rs}\nabla_1 g_{q\bar{p}}\nabla_{\bar{1}}g_{s\bar{r}}}{\lambda_1}-(1-2\delta)\sum_{k\in I}\frac{F^{kk}|\tilde{\lambda}_{1,k}|^2}{\lambda_1^2}\\
&+\sum_p \frac{F^{kk}}{6K}(|u_{pk}|^2+|u_{p\bar{k}}|^2)+\frac{1}{2}\psi''F^{kk}|u_k|^2+\psi'F^{kk}u_{k\bar{k}}\\
&-3(\psi')^2\delta^{-1}F^{11}K-C(F^{kk}\lambda_1^{-1}|g_{1\bar{1}k}|+\mathcal{F}).
\end{aligned}
\end{equation}

In the sequel, we can only change the term $-2(\psi')^2\delta^{-1}F^{11}K$ in the proof of case 1 in \cite{STW17} to $-3(\psi')^2\delta^{-1}F^{11}K$. By (3.55) of \cite{STW17}, we have
\begin{equation}\label{eb4.47}
\begin{aligned}
0\geq &F^{11}\biggl(\frac{\lambda_1^2}{40K}-3(\psi')^2\delta^{-1}K\biggl)+\biggl(\frac{1}{2}\psi''+C_\varepsilon\psi'\biggl)F^{kk}|u_k|^2\\
&-C_0\mathcal{F}+\varepsilon C_0\psi'\mathcal{F}-\psi'F^{kk}(\chi_{k\bar{k}}-g_{k\bar{k}}).
\end{aligned}
\end{equation}
for a uniform $C_0$. Under the assumption that the function $\underline{u}=0$ in a $\mathcal{C}$-subsolution, and that $\lambda_1\gg 1$, we may apply Proposition 2.3 of \cite{STW17} and see that there is a uniform positive number $\kappa>0$ such that one of two possibilities occurs:

(a) We have $F^{kk}(\chi_{k\bar{k}}-g_{k\bar{k}})>\kappa \mathcal{F}$. In this case we have
\begin{equation}
0\geq F^{11}\biggl(\frac{\lambda_1^2}{40K}-3(\psi')^2\delta^{-1}K\biggl)+\biggl(\frac{1}{2}\psi''+C_\varepsilon\psi'\biggl)F^{kk}|u_k|^2-C_0\mathcal{F}+\varepsilon C_0\psi'\mathcal{F}-\psi'\kappa \mathcal{F}.
\end{equation}
We first choose $\varepsilon>0$ such that $\varepsilon C_0<\frac{1}{2}\kappa$. We then choose the parameter $D_2$ in the definition of $\psi(t)=D_1e^{-D_2 t}$ to be large enough so that
\begin{equation}
\frac{1}{2}\psi''>C_\varepsilon |\psi'|.
\end{equation}
At this point, we have
\begin{equation}
0\geq F^{11}\biggl(\frac{\lambda_1^2}{40K}-3(\psi')^2\delta^{-1}K\biggl)-C_0\mathcal{F}-\frac{1}{2}\psi'\kappa \mathcal{F}.
\end{equation}
We now choose $D_1$ so large that $-\frac{1}{2}\psi'\kappa>C_0$, which implies
\begin{equation}
\frac{\lambda_1^2}{40K}\leq 3(\psi')^2\delta^{-1}K.
\end{equation}
Note that $\delta$ is determined by the choices of $D_1$ and $D_2$, according to \eqref{eb4.45}, so we obtain the required upper bound for $\lambda_1/K$.

(b) We have $F^{11}>\kappa \mathcal{F}$. With the choices of constants made above, \eqref{eb4.47} implies that
\begin{equation}
0\geq \kappa \mathcal{F}\biggl(\frac{\lambda_1^2}{40K}-3(\psi')^2\delta^{-1}K\biggl)
-C_0\mathcal{F}+\varepsilon C_0\psi'\mathcal{F}+C_1\psi'\mathcal{F}+\psi'F^{kk}g_{k\bar{k}}.
\end{equation}
for another uniform constant $C_1$. Since $F^{kk}g_{k\bar{k}}\leq \mathcal{F}\lambda_1$, we can divide through by $\mathcal{F}K$ and obtain
\begin{equation}
0\geq \frac{\kappa \lambda_1^2}{40K^2}-C_2(1+K^{-1}+\lambda_1K^{-1}),
\end{equation}
for a uniform $C_2$. The required upper bound for $\lambda_1/K$ follows from this.

\emph{Case} 2. We now assume that $\delta\lambda_1<-\lambda_n$, with all the constants $D_1$, $D_2$ and $\delta$ fixed as in the previous case. By (3.56) of \cite{STW17}, we have
\begin{equation}\label{eb4.54}
0\geq -\frac{3}{2}\frac{F^{kk}|\tilde{\lambda}_{1,k}|^2}{\lambda_1^2}+\frac{\delta^2}{10nK}\mathcal{F}\lambda_1^2+F^{kk}\phi''|V_k|^2-C\mathcal{F}\lambda_1.
\end{equation}
Similarly to \eqref{eb4.43}, we obtain
\begin{equation}
\frac{3}{2}\frac{F^{kk}|\tilde{\lambda}_{1,k}|^2}{\lambda_1^2}\leq F^{kk}\phi''|V_k|^2+C\mathcal{F}K.
\end{equation}
Returning to \eqref{eb4.54}, we obtain, since we may assume $\lambda_1\geq K$,
\begin{equation}
0\geq \frac{\delta^2\lambda_1^2}{10nK}\mathcal{F}-C\lambda_1\mathcal{F}.
\end{equation}
Dividing by $\lambda_1\mathcal{F}$ gives the required bound for $\lambda_1/K$.

Then, we immediately deduce the bound \eqref{eb4.17}, namely
\begin{equation}
\sup_M|\sqrt{-1}\partial\bar{\partial}u|_\alpha\leq C\biggl(\sup_M|\nabla u|_\alpha^2+1\biggl).
\end{equation}
A blow-up argument (Chapter 6 in \cite{S18}) combined with a Liouville theorem (Chapter 5 in \cite{S18}), shows that $\sup_M |\nabla u|_\alpha^2\leq C$, and so we get a uniform bound $|\Delta u|\leq C$.

By rewriting equation \eqref{en4.3} in real coordinates, we can then apply the Evans-Krylov Theorem \cite{r8}(see also Theorem 17.14 of \cite{GT98}) and deduce a uniform bound
\begin{equation}
|u|_{2+\beta}\leq C\;\;\;\;{\rm on}\;\;(M,\alpha)
\end{equation}
for a uniform $0<\beta<1$, where the norm is defined in \eqref{eb2.18} with respect to $(M,\alpha)$. Differentiating the equation and applying a standard bootstrapping argument, we finally obtain uniform higher-order estimates. Then we can prove the following Theorem.
\begin{theorem}\label{t4.4}
Let $(M,\alpha)$ be a complete Hermitian manifold with bounded geometry (see Definition \ref{db2.1}) of order $k-1$, $k\geq 4$. Fix a real $(1,1)$-form $\chi$ with bounded geometry of order $k-1$ on $(M,\alpha)$, given $h\in \tilde{C}^{k-2+\beta}(M)$,
let $u\in \tilde{C}^{k+\beta}(M)$ satisfy that
\begin{equation}\label{e1.9}
F(\alpha^{i\bar{p}}(\chi_{j\bar{p}}+u_{j\bar{p}}))=h+\epsilon u,
\end{equation}
where $\beta\in (0,1)$ and $\epsilon\in[0,1]$. Suppose the $\underline{u}\in \tilde{C}^{\infty}(M)$ is a $\mathcal{C}$-subsolution for the equation \eqref{e1.9}. Suppose that the equation satisfies conditions \emph{\textbf{(a1)}}-\emph{\textbf{(a4)}}. Then, we have an estimate $|u|_{k+\beta}\leq C_k$, with constant $C_k$ depending on $k$, on the background date $M,\alpha,\chi,F,h$, on $\sup_M |u|$, and the subsolution $\underline{u}$.
\end{theorem}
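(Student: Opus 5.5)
Since $\underline{u}\in\tilde{C}^\infty(M)$, we first replace $\chi$ by $\chi'=\chi+\sqrt{-1}\partial\bar\partial\underline{u}$ and $u$ by $u-\underline{u}$. Then $\chi'$ still has bounded geometry of order $k-1$, $h+\epsilon\underline{u}\in\tilde{C}^{k-2+\beta}(M)$, and the new equation has $0$ as a $\mathcal{C}$-subsolution. So we may assume $\underline{u}=0$. Since $u\in\tilde{C}^{k+\beta}(M)$ with $k\geq4$, we have $u\in C^4(M)$ and $|u|_2<\infty$, and $h\in C^2$ with $|h|_2<\infty$. These are exactly the standing hypotheses of this section.

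\emph{Step 1 ($C^2$ in terms of gradient).} The computation carried out above, with $H$, the perturbation $\tilde A=A-B$, and the generalized maximum principle via \eqref{eb4.37}--\eqref{eb4.39}, gives \eqref{eb4.17}: $\sup_M|\sqrt{-1}\partial\bar\partial u|_\alpha\leq C(\sup_M|\nabla u|^2_\alpha+1)$. Here $C$ depends only on the background data, $\sup|u|$ and the subsolution. The only new ingredient compared with \cite{STW17} is the term $\epsilon u_{1\bar1}$ in \eqref{e47.20}, which is absorbed because $\epsilon\le1$ and $u_{1\bar1}\le\lambda_1+C$.

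\emph{Step 2 (gradient bound).} This is the main obstacle. We argue by contradiction with a blow-up. Suppose there are solutions $u_m$ with $\sup|u_m|\le C$ and $K_m=\sup|\nabla u_m|^2\to\infty$. Pick points $x_m$ where $|\nabla u_m|^2\geq K_m/2$; on a noncompact manifold the sup need not be attained, so we take near-maximizers. Bounded geometry gives holomorphic charts $\xi_{x_m}:D(r)\to M$ with uniformly controlled metric. Rescale by $\tilde u_m(z)=u_m(\xi_{x_m}(z/\sqrt{K_m}))$ on $D(r\sqrt{K_m})$. Step 1 gives uniform $C^{1,\gamma}$ bounds on $\tilde u_m$, and the rescaled metrics converge to the flat metric. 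A subsequence then converges on compact sets to a bounded $C^{1,\gamma}$ function $v$ on $\mathbb{C}^n$ with $|\nabla v(0)|\geq 1/\sqrt2$, which is a viscosity solution of the limiting degenerate equation. Two facts make this limit the same as in \cite{S18}. First, the right-hand side $h+\epsilon u$ is bounded, so after scaling only the limits at $\partial\Gamma$ and the cone condition survive. Second, the $\mathcal{C}$-subsolution condition here, $\lim f(\lambda+t\textbf{e}_i)=\infty$, is stronger than the one used in \cite{S18}. The Liouville theorem of Chapter 5 of \cite{S18} then says $v$ is constant, a contradiction. Hence $\sup|\nabla u|\le C$ and $|\sqrt{-1}\partial\bar\partial u|_\alpha\le C$.

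\emph{Step 3 (higher order).} Combining uniform ellipticity, which follows from the $C^2$ bound together with (a2) and (a3), with the concavity of $F$, we apply Evans--Krylov in each chart $\xi_p(D(r))$. Written in real coordinates, the equation has $C^\beta$ data from $\chi$, $\alpha$, $h$ and $u$, and Evans--Krylov gives $|u|_{2+\beta}\le C$ uniformly in $p$. Next we differentiate the equation along the coordinate directions $v^i$. The derivatives satisfy linear uniformly elliptic equations whose coefficients $F^{ij}$ are $C^\beta$, and whose right-hand sides involve derivatives of $h$, $\chi$, $\alpha$, $u$. Interior Schauder estimates on $D(r/2)$ give $|u|_{3+\beta}\le C$. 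Iterating up to order $k$, which is allowed by the bounded geometry of order $k-1$ of $\alpha$ and $\chi$ and by $h\in\tilde C^{k-2+\beta}$, yields $|u|_{k+\beta}\le C_k$. All constants are independent of the chart, so the estimates are global.
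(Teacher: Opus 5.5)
Your proposal follows the paper's route: reduce to $\underline{u}=0$, prove \eqref{eb4.17} via $H$, the perturbation $\tilde A=A-B$ and the generalized maximum principle, bound the gradient by Sz\'{e}kelyhidi's blow-up and Liouville argument (your near-maximizers and bounded-geometry charts just spell out what the paper cites), and finish with Evans--Krylov and Schauder bootstrapping in the charts. One minor slip: \eqref{e47.20} needs a \emph{lower} bound on $\epsilon u_{1\bar 1}$, not the upper bound $u_{1\bar 1}\le\lambda_1+C$ you quote; the lower bound holds because $u_{1\bar 1}=\lambda_1-\chi_{1\bar 1}\geq -C$ in the coordinates where $g$ is diagonal, so the term is harmless.
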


If $\epsilon>0$, we can obtain the following $C^0$ estimates for \eqref{e1.1}.
\begin{theorem}\label{t4.1}
Let $(M,\alpha)$ be a complete Hermitian manifold with bounded geometry of order $l$, $l\geq 0$. Fix a real $(1,1)$-form $\chi$ on $(M,\alpha)$ such that $\lambda[\alpha^{i\bar{p}}\chi_{j\bar{p}}]\in \Gamma$, given $h\in C(M)$,
let $u\in C^2(M)$ be a bounded function satisfying
\begin{equation}
F(\alpha^{i\bar{p}}(\chi_{j\bar{p}}+u_{j\bar{p}}))=h(x)+\epsilon u,
\end{equation}
where $\epsilon$ is a positive constant. Suppose that the above equation satisfies conditions \emph{\textbf{(a1)}}-\emph{\textbf{(a4)}},
then
\begin{equation}
\sup_M |u|\leq \sup_M\frac{|F(\alpha^{i\bar{p}}\chi_{j\bar{p}})-h(x)|}{\epsilon}.
\end{equation}
\end{theorem}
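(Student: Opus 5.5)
The plan is to apply the generalized maximum principle of Cheng--Yau (Proposition \ref{pb4.2}, valid here by Corollary \ref{pbb4.3}) to $u$ and to $-u$, and to use the ellipticity and monotonicity of $F$ from \textbf{(a2)} to compare $F$ at the sup or inf with $F(\alpha^{i\bar p}\chi_{j\bar p})$. Since $u$ is bounded and $C^2$, both $\sup_M u$ and $\inf_M u$ are finite, and the generalized maximum principle produces approximating sequences with control on the complex Hessian.

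\emph{Upper bound.} By Proposition \ref{pb4.2} there is a sequence $x_k$ with $u(x_k)\to\sup u$, $|du(x_k)|\to 0$, and $\overline{\lim}\,(u_{p\bar q}(x_k))\le 0$. So for any $\eta>0$ and $k$ large we have $u_{p\bar q}(x_k)\le \eta\,\alpha_{p\bar q}$. The eigenvalues of $\alpha^{i\bar p}(\chi_{j\bar p}+u_{j\bar p})$ are then dominated, in the ordered (Weyl) sense, by those of $\alpha^{i\bar p}(\chi_{j\bar p}+\eta\alpha_{j\bar p})$. Since $f$ is symmetric and increasing in each variable, $F$ is monotone on Hermitian matrices with eigenvalues in $\Gamma$. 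Note that $\lambda[\alpha^{-1}\chi]+\eta\mathbf 1\in\Gamma$ because $\Gamma+\Gamma_n\subseteq\Gamma$ for a convex cone containing $\Gamma_n$. Hence
\[
h(x_k)+\epsilon u(x_k)=F(\alpha^{-1}(\chi+\sqrt{-1}\partial\bar\partial u))(x_k)\le F(\alpha^{-1}(\chi+\eta\alpha))(x_k).
\]
This gives
\[
\epsilon u(x_k)\le F(\alpha^{-1}\chi)(x_k)-h(x_k)+\omega(\eta),
\]
where $\omega(\eta)$ is the modulus of continuity of $f$ along the shift by $\eta\mathbf 1$. Letting $k\to\infty$ and then $\eta\to0$ yields $\epsilon\sup u\le \sup_M|F(\alpha^{-1}\chi)-h|$.

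\emph{Lower bound.} Apply Proposition \ref{pb4.2} to $-u$ to get $y_k$ with $u(y_k)\to\inf u$ and $\underline{\lim}\,(u_{p\bar q}(y_k))\ge0$. Then $u_{p\bar q}(y_k)\ge-\eta\alpha_{p\bar q}$, and monotonicity gives
\[
h(y_k)+\epsilon u(y_k)\ge F(\alpha^{-1}(\chi-\eta\alpha))(y_k).
\]
Here one needs $\lambda[\alpha^{-1}\chi]-\eta\mathbf 1$ to stay in $\Gamma$. Alternatively, concavity can be used: $F(\alpha^{-1}\chi)\le F(A)+F^{i\bar j}(A)(\chi-g)_{i\bar j}$, with the last term bounded above by $\eta\,\mathcal F(A)$ at $y_k$. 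The conclusion is $\epsilon\inf u\ge-\sup_M|F(\alpha^{-1}\chi)-h|$.

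\emph{Main obstacle.} The delicate point is the uniformity of the $\eta$-perturbation on a noncompact manifold. Both $\omega(\eta)$ and the term $\eta\,\mathcal F$ must tend to zero uniformly along the sequences, and $\lambda[\alpha^{-1}\chi]$ might approach $\partial\Gamma$ at infinity. The upper bound is safe: adding $\eta\mathbf1$ keeps us inside $\Gamma$, and concavity gives $f(\lambda+\eta\mathbf 1)-f(\lambda)\le \eta\sum f_i(\lambda)$. For the lower bound I would try the concavity route. If needed, I would add an auxiliary perturbation, for instance taking a limit via the scaling $t\mu$ with \textbf{(a4)}, or assuming $\chi$ stays uniformly inside $\Gamma$ as in the applications where $\chi$ is uniformly equivalent to $\alpha$ and $\Gamma=\Gamma_n$.
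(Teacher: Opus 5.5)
Your route is the same as the paper's: near-extremal sequences from Proposition \ref{pb4.2}, then monotonicity of $F$. The paper's one-line ``let $k\to\infty$'' skips exactly the point you flag. However, your resolution of that point is wrong for the upper bound and incomplete for the lower bound.

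For the upper bound you need $f(\lambda+\eta_k\mathbf 1)-f(\lambda)\to0$ at $\lambda=\lambda[\alpha^{-1}\chi](x_k)$. Concavity only bounds this by $\eta_k\sum_i f_i(\lambda)$. Nothing in the hypotheses keeps $\sum_i f_i$ bounded along $x_k$: for $f=\log\lambda_1\cdots\lambda_n$ it equals $\sum_i\lambda_i^{-1}$, which blows up near $\partial\Gamma$. The maximum principle also gives no rate for $\eta_k$. Your concavity route for the lower bound needs $\eta_k\mathcal F(A(y_k))\to0$, and $\mathcal F(A)$ is equally uncontrolled: no $C^2$ bound on $u$ is assumed, and $\lambda(A)$ may approach $\partial\Gamma$ too.

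This cannot be patched, because with only the pointwise hypothesis $\lambda[\alpha^{-1}\chi]\in\Gamma$ the statement is false. Take $M=\mathbb{C}^2$ with Euclidean $\alpha$, $f=\log(\lambda_1\lambda_2)$, $\Gamma=\Gamma_2$, $\epsilon=1$, $r=|z|^2$ and $u=2-(1+r)^{-1}\in[1,2)$. Then $\sqrt{-1}\partial\bar\partial u=(1+r)^{-2}\alpha-2(1+r)^{-3}\sqrt{-1}\,\partial r\wedge\bar\partial r$. Its eigenvalues are $h_1=(1+r)^{-2}$ orthogonally to the radial direction and $h_2=(1-r)(1+r)^{-3}\in[-\frac{1}{27},1]$ along it. With $S=\sigma(1+r)^2$ and $\sigma=\frac{1+\sqrt{e}}{e-1}$, put
\[
s_1=\frac{h_1(S+h_2)}{(e^u-1)S-h_2}>0,\qquad \chi=s_1\,\alpha+\frac{S-s_1}{r}\,\sqrt{-1}\,\partial r\wedge\bar\partial r .
\]
The choice of $\sigma$ gives $s_1(0)=S(0)$, so $\chi$ is smooth. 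Moreover $\lambda[\alpha^{-1}\chi]=(s_1,S)\in\Gamma_2$, $\chi+\sqrt{-1}\partial\bar\partial u>0$, and $(s_1+h_1)(S+h_2)=e^u s_1 S$. Set $h:=\log(s_1S)$; it is bounded because $s_1S\to\sigma/(e^2-1)$, so \textbf{(a3)} holds. Then the equation is satisfied and $F(\alpha^{-1}\chi)-h\equiv0$, yet $u\ge1$.

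Here $s_1\to0$, so $\lambda[\alpha^{-1}\chi]$ runs into $\partial\Gamma_2$. Along any sequence with $u(x_k)\to\sup u$, your error term satisfies $f(\lambda+\eta_k\mathbf 1)-f(\lambda)\ge\log(1+h_1/s_1)\to2$. The analogous construction with $u=-2+(1+r)^{-1}$ breaks the lower bound.

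So your fallback is not optional. One needs a uniform hypothesis such as $\lambda[\alpha^{-1}\chi](M)\subset K$ for a fixed compact $K\subset\Gamma$. This is what Section 4 actually assumes ($\lambda[\alpha^{i\bar p}\chi_{j\bar p}]\subset\subset\Gamma_n$). With it, your argument closes in both directions:
\begin{enumerate}
\item Choose $\eta_0>0$ so that the closed $\sqrt{n}\,\eta_0$-neighbourhood $K'$ of $K$ lies in $\Gamma$, and write $B=\alpha^{-1}\chi$.
\item For $\eta_k<\eta_0$, Weyl's inequality (ordered eigenvalues) gives $\lambda(A)\le\lambda(B)+\eta_k\mathbf 1$ along $x_k$ and $\lambda(A)\ge\lambda(B)-\eta_k\mathbf 1$ along $y_k$.
\item Convexity of $\Gamma$ and $f_i>0$ turn these into $f(\lambda(A))\le f(\lambda(B)+\eta_k\mathbf 1)$ and $f(\lambda(A))\ge f(\lambda(B)-\eta_k\mathbf 1)$.
\item The mean value theorem on segments inside $K'$ bounds both errors by $\eta_k\sup_{K'}\sum_i f_i\to0$.
\end{enumerate}
This yields $\inf_M(F(B)-h)\le\epsilon u\le\sup_M(F(B)-h)$, and hence the claimed estimate.
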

\begin{proof}
First, we want to estimate $\sup_M u$, we may assume that $\sup_M u>0$. By generalized maximum principle (see Proposition \ref{pb4.2}), there exists a sequence of points $\{x_k\}$ in $M$ such that $\lim u(x_k)=\sup u$ and $\overline{\lim}(u_{p\bar{q}}(x_k))\leq 0$. Then, at each $x_k$, we obtain
\begin{equation}\label{e6.36}
F(\alpha^{i\bar{p}}(\chi_{j\bar{p}}+u_{j\bar{p}}))-h(x)=\epsilon u.
\end{equation}
Let $k\rightarrow \infty$, we see immediately that
\begin{equation}
\sup_M |u|\leq \sup_M\frac{|F(\alpha^{i\bar{p}}\chi_{j\bar{p}})-h(x)|}{\epsilon}.
\end{equation}
The estimate on $\inf_M u$ is carried out similarly and then we can prove the Theorem.
\end{proof}

\section{Complete K\"{a}hler-Einstein metrics}

In this section, we establish the solvability of \eqref{e1.1} on certain manifolds that do not need to have bounded geometry, the key idea is to exhaust a general manifold using a family of manifolds with bounded geometry (see Lemma \ref{l7.3}). As a geometric application, we can construct complete K\"{a}hler-Einstein metrics, this requires using Wan-Xiong Shi's result on the short-time existence of the K\"{a}hler-Ricci flow to construct a complete metric with certain bounded properties (see Lemma \ref{l7.9}).

Next, we set up the continuity method to solve the equation \eqref{e1.1}, we assume that
\begin{equation}
F(\alpha^{i\bar{p}}\chi_{j\bar{p}})=0\;\;\;{\rm and}\;\;\;\lambda[\alpha^{i\bar{p}}\chi_{j\bar{p}}]\subset\subset \Gamma= \Gamma_n.
\end{equation}

Define
\begin{equation}
\Psi:\tilde{C}^{k+\alpha}(M)\times \mathbb{R}\rightarrow \tilde{C}^{k-2+\alpha}(M)
\end{equation}
by
\begin{equation}
\Psi(u,t)=F(\alpha^{i\bar{p}}(\chi_{j\bar{p}}+u_{j\bar{p}}))-th(x)-\epsilon u.
\end{equation}
The fact that $\Psi$ maps $\tilde{C}^{k+\alpha}(M)\times \mathbb{R}$ into $\tilde{C}^{k-2+\alpha}(M)$ can be easily verified using coordinate charts.

We are actually interested in the following open subset $U$ of $\tilde{C}^{k+\alpha}(M)$:
\begin{equation}
\begin{aligned}
U=\{u\in \tilde{C}^{k+\alpha}(M):(1/c)(\alpha_{j\bar{p}})<(\chi_{j\bar{p}}+u_{j\bar{p}})<c (\alpha_{j\bar{p}}), \;
&{\rm for \;some\; positive \; constant} \;c\}.
\end{aligned}
\end{equation}
To solve \eqref{e1.1}, we set $\mathcal{L}$ be the set $\{t\in [0,1]: {\rm \;there \;exists} \;u\in U \;{\rm such \;that} \; \Psi(u,t)=0\}$. It suffices to show that $1\in \mathcal{L}$. Indeed, we shall show that $\mathcal{L}=[0,1]$. First of all, observe that $\Psi(0,0)=0$, we have $0\in \mathcal{L}$. Then we shall show that $\mathcal{L}$ is both open and closed.

\emph{Openness}. To show that $\mathcal{L}$ is an open set, we shall use the usual implicit function theorem for Banach spaces. This amounts to showing that the Fr\'{e}chet derivative of $\Psi$ with respect to $u$, denoted as $\Psi_u$, has a bounded inverse.
The Fr\'{e}chet derivative $\Psi_u$ at the point $(u,t)$ is then an operator from $\tilde{C}^{k+\alpha}(M)$ into $\tilde{C}^{k-2+\alpha}(M)$ defined for any $g\in \tilde{C}^{k+\alpha}(M)$,
\begin{equation}
(\Psi_u(u,t))(g)=F^{ij}(\alpha^{a\bar{p}}(\chi_{b\bar{p}}+u_{b\bar{p}}))\alpha^{i\bar{l}}g_{j\bar{l}}-\epsilon g.
\end{equation}
Therefore, we have to show that, for any $v\in \tilde{C}^{k-2+\alpha}(M)$,
\begin{equation}\label{e7.26}
F^{ij}(\alpha^{a\bar{p}}(\chi_{b\bar{p}}+u_{b\bar{p}}))\alpha^{i\bar{l}}g_{j\bar{l}}-\epsilon g=v
\end{equation}
can be solved for $g\in \tilde{C}^{k+\alpha}(M)$ and that $|g|_{k+\alpha}\leq c|v|_{k-2+\alpha}$ for some constant $c$ independent of $v$. By the $C^2$ estimates and condition \textbf{(a3)}, we have
\begin{equation}
\tilde{c}^{-1}\delta_{ij}<(F^{ij}(\alpha^{a\bar{p}}(\chi_{b\bar{p}}+u_{b\bar{p}})))<\tilde{c}\delta_{ij}
\end{equation}
for some constant $\tilde{c}>0$.

We first show that there is at most on function $g$ in $\tilde{C}^{k+\alpha}(M)$ that satisfies equation \eqref{e7.26}. It is enough to check that $(\Psi_u(u,t))(g)=0$ and $g\in \tilde{C}^{k+\alpha}(M)$ imply $g\equiv 0$. Assume $g\in \tilde{C}^{k+\alpha}(M)$, $g$ is in particular bounded. We want to show that $g\leq 0$. The generalized maximum principle implies the existence of a sequence of points $\{x_i\}$ in $M$ such that $\lim g(x_i)=\sup g$ and $\overline{\lim}(g_{p\bar{q}}(x_i))\leq 0$. Applying this to the equation $(\Psi_u(u,t))(g)=0$, we obtain $\sup g\leq 0$. Similarly, $\inf g\geq 0$, which implies that $g\equiv 0$.

Next, we prove the existence of $g$.

Let $\Omega_i$ be an exhaustion of $M$ by compact subdomains. Assume $v\in \tilde{C}^{k-2+\alpha}(M)$, and let $g^i$ be the unique solution to
\begin{equation}
\begin{aligned}
(\Psi_u(u,t))(g^i)=v \;\;\;{\rm on}\;\Omega_i,\\
g^i=0\;\;\;{\rm on}\;\partial\Omega_i.
\end{aligned}
\end{equation}
By applying the maximum principle to $\Omega_i$, we obtain that
\begin{equation}
\sup_{\Omega_i}|g^i|\leq \frac{\sup |v|}{\epsilon}.
\end{equation}
Interior Schauder estimates applied in coordinate charts as in Definition \ref{db2.1} immediately show that a subsequence of $g^i$ will converge to some $g\in \tilde{C}^{k+\alpha}(M)$ which solves \eqref{e7.26} and that the estimate $|g|_{k+\alpha}\leq c|v|_{k-2+\alpha}$ is valid for some constant independent of $v$. This completes the proof of the openness of $\mathcal{L}$.

\emph{Closedness}. By Theorem \ref{t4.4} and Theorem \ref{t4.1}, we obtain the closedness of $\mathcal{L}$.

Next, we prove the uniqueness of the solution $u\in \tilde{C}^{k+\beta}(M)$. Suppose that $u$ and $\tilde{u}$ are two solutions of \eqref{e7.33}, it is obvious that
\[F[\alpha^{i\bar{p}}(\chi_{j\bar{p}}+u_{j\bar{p}})+\alpha^{i\bar{p}}(\tilde{u}-u)_{j\bar{p}}]-F[\alpha^{i\bar{p}}(\chi_{j\bar{p}}+u_{j\bar{p}})]=\epsilon(\tilde{u}-u)\]
Applying the generalized maximum principle to $\tilde{u}-u$, we can prove that $\tilde{u}\equiv u$ on $M$(the proof is similar to that of Theorem \ref{t4.1}).

Then, by the above argument, we can prove the following Theorem.
\begin{theorem}\label{t7.5}
Let $(M,\alpha)$ be a complete Hermitian manifold with bounded geometry of order $k-1$, where $k\geq 4$.
Fix a real $(1,1)$-form $\chi$ with bounded geometry of order $k-1$, and $\chi$ is uniformly equivalent to $\alpha$ on $(M,\alpha)$, given $h\in \tilde{C}^{k-2+\beta}(M)$, where $\beta\in (0,1)$. Consider the following equation on $M$:
\begin{equation}\label{e7.33}
F(\alpha^{i\bar{p}}(\chi_{j\bar{p}}+u_{j\bar{p}}))=h(x)+\epsilon u,
\end{equation}
where $\epsilon$ is a positive constant. Suppose that the above equation satisfies conditions \emph{\textbf{(a1)}}-\emph{\textbf{(a4)}}, $\Gamma= \Gamma_n$ and $F(\alpha^{i\bar{p}}\chi_{j\bar{p}})=0$, suppose $0$ is a $\mathcal{C}$-subsolution of \eqref{e7.33}. Then the equation \eqref{e7.33} exists a unique solution $u\in \tilde{C}^{k+\beta}(M)$ such that $\chi+\sqrt{-1}\partial\bar{\partial}u$ is uniformly equivalent to $\alpha$.
\end{theorem}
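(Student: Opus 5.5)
The proof runs the continuity method described just before the statement, using the path
\[
\Psi(u,t)=F(\alpha^{i\bar{p}}(\chi_{j\bar{p}}+u_{j\bar{p}}))-th(x)-\epsilon u=0,\qquad t\in[0,1],
\]
in the open set $U\subset\tilde{C}^{k+\beta}(M)$ of functions $u$ for which $\chi+\sqrt{-1}\partial\bar\partial u$ is uniformly equivalent to $\alpha$. Let $\mathcal{L}$ be the set of $t$ for which a solution exists in $U$. Since $F(\alpha^{i\bar p}\chi_{j\bar p})=0$, we have $\Psi(0,0)=0$, so $0\in\mathcal{L}$. It then suffices to show that $\mathcal{L}$ is open and closed, and then to prove uniqueness.

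\textbf{Subsolution along the path.} Before that, I check that $0$ remains a $\mathcal{C}$-subsolution for every $t\in[0,1]$. In the present setting the $\mathcal{C}$-subsolution condition asks that $\lim_{s\to\infty}f(\lambda[\alpha^{-1}\chi]+s\mathbf{e}_i)=\infty$. This condition does not involve the right-hand side $th+\epsilon u$ at all, so it holds uniformly in $t$.

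\textbf{Openness.} The linearization is
\[
L g=F^{ij}\alpha^{i\bar l}g_{j\bar l}-\epsilon g .
\]
For $u\in U$, the uniform equivalence of $\chi+\sqrt{-1}\partial\bar\partial u$ with $\alpha$ places the eigenvalues in a compact subset of $\Gamma_n$. Together with \textbf{(a2)}, this makes $(F^{ij})$ uniformly elliptic with bounded $C^{k-2+\beta}$ coefficients in the charts of Definition \ref{db2.1}.

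Injectivity of $L$ on $\tilde C^{k+\beta}$ follows from the generalized maximum principle, Corollary \ref{pbb4.3}, combined with the $-\epsilon g$ term. Surjectivity comes from solving Dirichlet problems on an exhaustion $\Omega_i$. The maximum principle gives the bound $|g^i|\le\sup|v|/\epsilon$, and interior Schauder estimates in the bounded-geometry charts give uniform $C^{k+\beta}$ bounds, so a subsequence converges to a solution. The Banach implicit function theorem then yields openness. Because a small $\tilde C^{k+\beta}$ perturbation of $u$ preserves uniform equivalence, the perturbed solutions stay in $U$.

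\textbf{Closedness.} Take $t_m\to t$ with solutions $u_m\in U$. Theorem \ref{t4.1}, applied with $h$ replaced by $t_mh$, bounds $\sup|u_m|$ by $\sup|t_mh|/\epsilon$ uniformly. Theorem \ref{t4.4} then gives $|u_m|_{k+\beta}\le C$, with $C$ independent of $m$ because the data $t_mh$ lie in a bounded set. Arzel\`a--Ascoli on each chart, plus a diagonal argument, extracts a limit $u$ solving the equation at time $t$. It remains to show $u\in U$, and this is the main obstacle: the $C^2$ bound controls $\chi+\sqrt{-1}\partial\bar\partial u$ only from above, so I need a uniform positive lower bound on the eigenvalues. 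The equation supplies it. The eigenvalues $\lambda$ are bounded above by $C$, and $f(\lambda)=t_mh+\epsilon u_m$ is bounded below uniformly. If some $\lambda_i\to0$, then with the others bounded $\lambda$ approaches $\partial\Gamma_n$, and \textbf{(a3)} forces $f$ to fall below $\inf h$ there. One must check this carefully, because \textbf{(a3)} is stated relative to $h$ and not to $th+\epsilon u$. I would argue via concavity and boundedness of $f$ on bounded subsets of $\Gamma_n$, or use that for the relevant $f$ (e.g.\ $\log\prod\lambda_i$) one has $f\to-\infty$ at $\partial\Gamma_n$. This gives $c^{-1}\alpha<\chi+\sqrt{-1}\partial\bar\partial u_m<c\alpha$ uniformly, and the bound passes to the limit.

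\textbf{Uniqueness.} For two solutions $u,\tilde u$, write the difference of the equations as
\[
\int_0^1F^{ij}\bigl(\alpha^{-1}(\chi+\sqrt{-1}\partial\bar\partial(u+s(\tilde u-u)))\bigr)\,ds\;\alpha^{i\bar l}(\tilde u-u)_{j\bar l}=\epsilon(\tilde u-u).
\]
By convexity of $\Gamma_n$ the segment between the two eigenvalue vectors stays in $\Gamma_n$, so the averaged coefficient is positive. Applying the generalized maximum principle to $\tilde u-u$ at near-maximum points gives $\sup(\tilde u-u)\le0$, and symmetrically $\inf(\tilde u-u)\ge0$, so $\tilde u=u$.
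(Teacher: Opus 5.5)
Your proof is the paper's proof. It uses the same path $\Psi(u,t)$ and open set $U$, gets injectivity of the linearization from the generalized maximum principle, and gets surjectivity from Dirichlet problems on an exhaustion with the bound $\sup|v|/\epsilon$ plus interior Schauder estimates. Closedness comes from Theorems \ref{t4.1} and \ref{t4.4}, and uniqueness from the generalized maximum principle applied to $\tilde u-u$.

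The step that does not close is exactly the one you flag: keeping the limit in $U$. A uniform lower bound on the eigenvalues requires $\inf_M(th+\epsilon u_t)$ to stay uniformly above $\sup_{\partial\Gamma}f$, and the uniform ellipticity used for Evans--Krylov in Theorem \ref{t4.4} needs the same thing. But \textbf{(a3)} only controls $h$. The paper does not address this either; it just invokes \textbf{(a3)} in the openness step and cites Theorems \ref{t4.4} and \ref{t4.1} for closedness.

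Your first suggested remedy, via concavity and boundedness of $f$ on bounded sets, cannot work, because for $f$ with finite boundary values the statement is false. Take $n=1$ and the flat torus $\mathbb{C}/(2\pi\mathbb{Z}+2\pi\sqrt{-1}\mathbb{Z})$ (or $\mathbb{C}$ itself), with $\alpha=\chi=\sqrt{-1}dz\wedge d\bar z$, $\Gamma=\Gamma_1$ and $f(\lambda)=\lambda-1$. Then \textbf{(a1)}--\textbf{(a4)}, $F(\alpha^{i\bar p}\chi_{j\bar p})=0$ and the subsolution condition all hold, with $\sup_{\partial\Gamma}f=-1$. The equation becomes linear, $u_{z\bar z}=h+\epsilon u$. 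Choose $h=b+2(1+4\epsilon)\cos x$ with $b>1+8\epsilon$, so that $\inf_M h>-1$. The unique bounded solution is $u=-b/\epsilon-8\cos x$, which gives $\lambda=1+u_{z\bar z}=1+2\cos x$, and this equals $-1\notin\Gamma_1$ at $x=\pi$. Along your path $\lambda_t=1+2t\cos x$, so $\mathcal{L}=[0,\frac12)$ is not closed, and it fails precisely at this step.

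Your second remedy is the right one. Suppose $f\to-\infty$ at $\partial\Gamma_n$, as for $f=\log\lambda_1\cdots\lambda_n$, which is the only case the paper uses afterwards. Then Theorem \ref{t4.1} gives $f(\lambda)=t_mh+\epsilon u_m\ge-2\sup_M|h|$. Together with $|\lambda|\le C$, this confines $\lambda$ to a fixed compact subset of $\Gamma_n$, so the uniform equivalence passes to the limit.

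More generally, apply the generalized maximum principle at near-minimum points of $u_t$, where $\sqrt{-1}\partial\bar\partial u_t\ge -o(1)\alpha$ and $F(\alpha^{i\bar p}\chi_{j\bar p})=0$. This gives $\epsilon\inf_M u_t\ge-t\sup_M h$, hence $th+\epsilon u_t\ge t(\inf_M h-\sup_M h)$. So your argument goes through if \textbf{(a3)} is strengthened to $\sup_{\partial\Gamma}f<\inf_M h-\sup_M h$, which the example above violates.
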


Next, we aim to weaken the conditions in Theorem \ref{t7.5} by replacing the bounded geometry condition with bounded curvatures and torsions. For the subsequent calculations, we need to construct certain holomorphic coordinate systems on $M$:
\begin{lemma}\label{l7.4}
Let $(M,g)$ be a Hermitian manifold of dimension $n$, assume that
\begin{equation}
|T|_g+|\nabla_g T|_g+|Rm|_g<\infty,
\end{equation}
then for any point $p\in M$, there exists a local holomorphic coordinate system $\{w^1,\cdots,w^n\}$ around $p$ such that $g_{i\bar{j}}(p)=\delta_{ij}$ and all derivatives of $g(p)$ up to order 2 are bounded, where the bounds depend only on $n$, $|T|_g$, $|\nabla_g T|_g$ and $|Rm|_g$. Moreover, if $T=0$(i.e. $g$ is a K\"{a}hler metric), then we have
\begin{equation}
\frac{\partial g_{i\bar{j}}}{\partial z^k }=0;\;\;\;\frac{\partial^2 g_{i\bar{j}}}{\partial z^k \partial z^l}=0
\end{equation}
for any $i,j,k,l$.
\end{lemma}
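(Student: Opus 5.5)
We will construct the coordinates by a polynomial change of variables starting from any holomorphic chart, in the spirit of normal coordinates in K\"{a}hler geometry. First choose holomorphic coordinates $z^i$ centered at $p$ with $g_{i\bar{j}}(p)=\delta_{ij}$; a complex-linear change achieves this. Then we make a quadratic and cubic holomorphic change
\[
z^i=w^i-\frac12 a^i_{kl}w^kw^l-\frac16 b^i_{klm}w^kw^lw^m ,
\]
with coefficients symmetric in the lower indices. We choose these coefficients so that the purely holomorphic derivatives $\partial_k g_{i\bar{j}}(p)$ and $\partial_k\partial_l g_{i\bar{j}}(p)$ are killed as far as the torsion allows. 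Since $w$ and $z$ agree to first order, $g_{i\bar{j}}(p)=\delta_{ij}$ persists.

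\emph{First derivatives.} At $p$ we have $\Gamma^i_{kj}=\partial_k g_{j\bar{i}}$. The symmetric part $\frac12(\Gamma^i_{kj}+\Gamma^i_{jk})$ can be removed by choosing $a^i_{kl}$ equal to it. The remaining part is $\frac12 T^i_{kj}$, which is bounded by $|T|_g$. Hence after the change, $|\partial_k g_{i\bar{j}}(p)|\le C|T|_g$. The conjugate derivatives are just complex conjugates, so they obey the same bound.

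\emph{Second derivatives.} The mixed derivatives are handled directly by the curvature formula recalled in Section~2:
\[
\partial_k\partial_{\bar{l}}g_{i\bar{j}}=-R_{i\bar{j}k\bar{l}}+g^{p\bar{q}}\partial_k g_{i\bar{q}}\,\partial_{\bar{l}}g_{p\bar{j}} .
\]
At $p$ this is bounded by $|Rm|_g+C|T|_g^2$. For the holomorphic second derivatives $\partial_k\partial_l g_{i\bar{j}}(p)$, we express $\partial_l\Gamma^i_{kj}$ in terms of $\nabla_l T$, quadratic expressions in $\Gamma$ (hence in $T$ after the first step), and a part fully symmetric in $(k,l,j)$. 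The symmetric part is removed by the cubic coefficients $b^i_{klm}$. The coordinate change introduces correction terms quadratic in $a$, which we absorb by choosing $b$ accordingly. What remains is bounded by $C(|\nabla T|_g+|T|_g^2)$. The $\bar{k}\bar{l}$-derivatives follow by conjugation.

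\emph{K\"{a}hler case.} If $T=0$, then $\Gamma^i_{kj}$ is symmetric. In K\"{a}hler geometry $\partial_l\Gamma^i_{kj}$ is symmetric in all of $k,l,j$ once the first derivatives vanish. Both steps above therefore remove the holomorphic derivatives completely, giving $\partial_k g_{i\bar{j}}(p)=0$ and $\partial_k\partial_l g_{i\bar{j}}(p)=0$, exactly as for K\"{a}hler normal coordinates.

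\emph{Main obstacle.} The delicate step is the bookkeeping for the holomorphic second derivatives: we must verify that the nonsymmetric part of $\partial_l\Gamma^i_{kj}$ at $p$ is genuinely controlled by $\nabla T$ and $T\ast T$ only. To do this, we would first work out the transformation rule of $\Gamma$ under holomorphic coordinate changes, namely that $\Gamma$ changes by the second derivatives of the change. We would then differentiate the identity $T^i_{kj}=\Gamma^i_{kj}-\Gamma^i_{jk}$, writing $\partial_l T=\nabla_l T+\Gamma\ast T$. This shows that every antisymmetric combination of $\partial_l\Gamma^i_{kj}$ in $(k,j)$ or in $(k,l)$ is bounded, and a totally symmetric remainder is all that the cubic term needs to kill.
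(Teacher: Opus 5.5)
Your proposal is correct and is essentially the paper's argument: a quadratic holomorphic change that removes the symmetric part of $\partial_k g_{i\bar j}(p)$ (leaving $\frac12 T$), a cubic change that removes the totally symmetric part of $\partial_k\partial_l g_{i\bar j}(p)$ (leaving terms bounded by $|\nabla T|_g+|T|_g^2$), and the curvature formula for the mixed derivatives. The only difference is that you combine the two changes into one, which is harmless: the cubic term does not affect first derivatives at $p$, so the bound on the non-symmetric part can be computed in the final chart, where $\Gamma^i_{kj}(p)=\frac12 T^i_{kj}(p)$. Your normalization is also the careful one, with cubic term $\frac16 b$ and $b$ fully symmetric so that $b$ enters $\partial_k\partial_l g_{i\bar j}(p)$ exactly as $-b^j_{kli}$; the paper's choice $B^\beta_{lm\alpha}=-\frac13\partial_l\partial_m(g_y)_{\alpha\bar\beta}$ overlooks that $\partial_l\partial_m(w^qw^r)$ produces two terms and should have coefficient $-\frac16$, though this does not affect the conclusion.
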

\begin{proof}
The result is local, denote the components of $g$ in the holomorphic coordinates $\{z^1,\cdots,z^n\}$ around $p$ by $(g_z)_{i\bar{j}}$. By a complex linear transformation, we may assume that $(g_z)_{i\bar{j}}(p)=\delta_{ij}$, define a new holomorphic coordinate system $\{y^1,\cdots,y^n\}$ by
\begin{equation}
z^i=y^i+A_{mn}^i y^m y^n,
\end{equation}
here $A_{mn}^i$ are constants to be chosen later, and repeated indices indicate a summation. Then using the new coordinates $\{y^1,\cdots,y^n\}$, we have
\begin{equation}
\begin{aligned}
(g_y)_{\alpha\bar{\beta}}&=\frac{\partial z_i}{\partial y^\alpha}\overline{\frac{\partial z_j}{\partial y^\beta}}(g_z)_{i\bar{j}}\\
&=(g_z)_{\alpha\bar{\beta}}+(A_{\alpha n}^i+A^i_{n \alpha})(g_z)_{i\bar{\beta}}y^n+(\overline{A_{\beta n}^j}+\overline{A^j_{n \beta}})(g_z)_{\alpha\bar{j}}\bar{y}^n+O(|y|^2).
\end{aligned}
\end{equation}
Now we choose $A_{\alpha l}^\beta=-\frac{1}{2}\partial_l (g_z)_{\alpha\bar{\beta}}(p)$, then $(g_y)_{\alpha\bar{\beta}}(p)=\delta_{\alpha\bar{\beta}}$ and
\begin{equation}
\partial_l (g_y)_{\alpha\bar{\beta}}(p)=\frac{1}{2}(\partial_l (g_z)_{\alpha\bar{\beta}}-\partial_\alpha (g_z)_{l\bar{\beta}})(p)=O(n,|T|_g),
\end{equation}
here we use the fact that
\begin{equation}
\frac{\partial}{\partial z^l}\Big|_p=\frac{\partial}{\partial y^l}\Big|_p.
\end{equation}

Next, we define another holomorphic coordinate system $\{w^1,\cdots,w^n\}$ by
\begin{equation}
y^i=w^i+B_{pqr}^iw^p w^q w^r,
\end{equation}
here $B_{pqr}^i$ are constants to be chosen later, it's obvious that
\begin{equation}\label{e7.19}
\begin{aligned}
&(g_w)_{i\bar{j}}(p)=(g_y)_{i\bar{j}}(p)=\delta_{ij};\\
\partial_l(g_w&)_{i\bar{j}}(p)=\partial_l(g_y)_{i\bar{j}}(p)=O(n,|T|_g)
\end{aligned}
\end{equation}
for all $l,i,j$. And
\begin{equation}
\begin{aligned}
(g_w)_{\alpha\bar{\beta}}&=\frac{\partial y_i}{\partial w^\alpha}\overline{\frac{\partial y_j}{\partial w^\beta}}(g_y)_{i\bar{j}}\\
&=(g_y)_{\alpha\bar{\beta}}+(B_{\alpha q r}^i+B_{q \alpha r}^i+B_{qr\alpha}^i)(g_y)_{i\bar{\beta}}w^q w^r\\
&\;\;\;\;\;+(\overline{B_{\beta q r}^j}+\overline{B_{q \beta r}^j}+\overline{B_{qr\beta}^j})(g_y)_{\alpha\bar{j}}\bar{w}^q \bar{w}^r+O(|w|^3).
\end{aligned}
\end{equation}
Choose $B_{lm\alpha}^{\beta}=-\frac{1}{3}\partial_l\partial_m(g_y)_{\alpha\bar{\beta}}$, then we have
\begin{equation}\label{e7.21}
\partial_l\partial_m(g_w)_{\alpha\bar{\beta}}(p)=\frac{2}{3}\partial_l(\partial_m (g_y)_{\alpha\bar{\beta}}-\partial_{\alpha}(g_y)_{m\bar{\beta}})(p)=O(n,|T|_g,|\nabla_g T|_g),
\end{equation}
here we use the fact that
\begin{equation}
\frac{\partial^2}{\partial w^l \partial w^m}\Big|_p=\frac{\partial^2}{\partial y^l \partial y^m}\Big|_p.
\end{equation}
Recall that
\begin{equation}\label{e7.23}
R_{i\bar{j}k\bar{l}}=-\frac{\partial^2g_{i\bar{j}}}{\partial w^k \partial \bar{w}^l}+g^{p\bar{q}}\frac{\partial g_{i\bar{q}}}{\partial w^k}\frac{\partial g_{p\bar{j}}}{\partial \bar{w}^l},
\end{equation}
then by \eqref{e7.19}, \eqref{e7.21} and \eqref{e7.23}, we can prove this lemma in the $w$-coordinates.
\end{proof}

Since it is in general not true that $(M,g)$ has bounded geometry, we proceed as in \cite{LT20}:

Let $\kappa\in (0,1)$, $f:[0,1)\rightarrow [0,\infty)$ be the function:
\begin{equation}
\label{e5.2}
f(s)=\left\{ \begin{aligned}
&0,\;\;\;\;\;\;\;\;\;\;\;\;\;\;\;\;\;\;\;\;\;\;\;\;\;\;\;\;\;\;\;\;\;\;\;\;\;\;\;s\in[0,1-\kappa];\\
&-\log\biggl[1-\biggl(\frac{s-1+\kappa}{\kappa}\biggl)^2\biggl],s\in(1-\kappa,1).
\end{aligned} \right.
\end{equation}
Let $\psi\geq 0$ be a smooth function on $\mathbb{R}^+$ such that 
\begin{equation}
\label{e5.3}
\psi(s)=\left\{ \begin{aligned}
&0,\;\;\;s\in[0,1-\kappa+\kappa^2];\\
&1,\;\;\;s\in(1-\kappa+2\kappa^2,1).
\end{aligned} \right.
\end{equation}
and $\frac{2}{\kappa^2}\geq \psi'\geq 0$. Define
\[\mathfrak{F}:=\int_0^s\psi(\tau)f'(\tau)d\tau\]
By \cite{LT20}, we have
\begin{lemma}\label{l77.3}Suppose $0<\kappa<\frac{1}{8}$. Then the function $\mathfrak{F}\geq 0$ defined above is smooth and satisfies the following:\\
\hspace*{0.5cm}\emph{(i)} $\mathfrak{F}=0$ for $s\in[0,1-\kappa+\kappa^2]$.\\
\hspace*{0.4cm}\emph{(ii)} $\mathfrak{F}'\geq 0$ and for any $k\geq 1$, $\exp(-k\mathfrak{F})\mathfrak{F}^{(k)}$ is uniformly bounded.\\
\end{lemma}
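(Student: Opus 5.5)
The plan is to work directly from the definition $\mathfrak{F}(s)=\int_0^s\psi(\tau)f'(\tau)\,d\tau$. I would split $[0,1)$ into three regions: $[0,1-\kappa+\kappa^2]$, where $\psi\equiv 0$; the compact transition interval $[1-\kappa+\kappa^2,\,1-\kappa+2\kappa^2]$; and $(1-\kappa+2\kappa^2,1)$, where $\psi\equiv1$. The constants are allowed to depend on $\kappa$ and $k$.

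\emph{Smoothness.} The only non-smooth points of $f$ in $[0,1)$ occur at $s=1-\kappa$. There $f$ switches from $0$ to $-\log(1-x^2)$ with $x=(s-1+\kappa)/\kappa$, so $f$ is only $C^1$: $f'$ vanishes at $x=0$ but $f''$ jumps to $2/\kappa^2$. Since $\psi$ vanishes on $[0,1-\kappa+\kappa^2]$, which is a neighborhood of $1-\kappa$ (this uses $\kappa^2>0$), the product $\psi f'$ is identically zero there. On $(1-\kappa,1)$ the product $\psi f'$ is a product of smooth functions. Hence $\psi f'$ is smooth on $[0,1)$, and so is its primitive $\mathfrak{F}$.

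\emph{Sign and (i).} Statement (i) is immediate because the integrand vanishes for $\tau\le 1-\kappa+\kappa^2$. For (ii) note that $\mathfrak{F}'=\psi f'$. On $(1-\kappa,1)$ we have
\[
f'(s)=\frac{1}{\kappa}\cdot\frac{2x}{1-x^2}\ge 0,\qquad x\in[0,1),
\]
and $f'=0$ elsewhere. Together with $\psi\ge0$ this gives $\mathfrak{F}'\ge0$, and hence also $\mathfrak{F}\ge0$.

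\emph{The bound on $e^{-k\mathfrak{F}}\mathfrak{F}^{(k)}$.} On the compact interval $[0,1-\kappa+2\kappa^2]$, $\mathfrak{F}$ is smooth and $\mathfrak{F}\ge0$. So $e^{-k\mathfrak{F}}\le1$ and $|\mathfrak{F}^{(k)}|$ is bounded, with a bound depending on $\kappa$, $k$ and the derivatives of $\psi$; this uses $\kappa<1/8$ only to ensure $1-\kappa+2\kappa^2<1$. On $(1-\kappa+2\kappa^2,1)$ we have $\psi\equiv1$, so $\mathfrak{F}=f+c_\kappa$ with $c_\kappa=\mathfrak{F}(1-\kappa+2\kappa^2)-f(1-\kappa+2\kappa^2)$ a constant. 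Consequently
\[
e^{-k\mathfrak{F}}=e^{-kc_\kappa}(1-x^2)^k .
\]
Writing $f=-\log(1-x)-\log(1+x)$, one gets for $k\ge1$
\[
\mathfrak{F}^{(k)}=f^{(k)}=\kappa^{-k}(k-1)!\,\bigl[(1-x)^{-k}+(-1)^{k-1}(1+x)^{-k}\bigr].
\]
Since $x\in(0,1)$, this is bounded by $C_k\kappa^{-k}(1-x)^{-k}$. Multiplying,
\[
\bigl|e^{-k\mathfrak{F}}\mathfrak{F}^{(k)}\bigr|\le C_k\kappa^{-k}e^{-kc_\kappa}(1+x)^k\le C(k,\kappa),
\]
uniformly up to $s=1$. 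This is the only step with real content: it requires the exact cancellation of the blow-up rate $(1-x)^{-k}$ of $f^{(k)}$ by the decay rate $(1-x)^k$ of $e^{-kf}$. The explicit formula above shows the two powers match exactly, which completes the argument.
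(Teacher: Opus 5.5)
Your proof is correct. The paper gives no proof of this lemma of its own: it quotes it from Lee--Tam \cite{LT20}. Your argument therefore works as a self-contained verification rather than an alternative route.

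The steps are the natural ones:
\begin{itemize}
\item Smoothness holds because $\psi$ vanishes on a neighbourhood of the $C^1$-but-not-$C^2$ junction of $f$ at $s=1-\kappa$.
\item $\mathfrak{F}'=\psi f'\ge 0$.
\item Compactness handles $[0,1-\kappa+2\kappa^2]$.
\item On the region where $\psi\equiv 1$, the factor $e^{-k\mathfrak{F}}=e^{-kc_\kappa}(1-x^2)^k$ exactly cancels the blow-up $|f^{(k)}|\lesssim\kappa^{-k}(1-x)^{-k}$.
\end{itemize}
The resulting constant depends on $k$, $\kappa$ and the fixed cutoff $\psi$, but not on $s$. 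That is the sense of ``uniformly bounded'' needed later in Lemma \ref{l7.5}, where $\kappa$ is fixed and only $\rho_0$ varies.

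There is one small slip. We have $\frac{d^k}{dx^k}\bigl[-\log(1+x)\bigr]=(-1)^k(k-1)!\,(1+x)^{-k}$, so the bracket in your formula for $f^{(k)}$ should be $(1-x)^{-k}+(-1)^k(1+x)^{-k}$. For $k=1$ this recovers your earlier $f'=\kappa^{-1}\,2x/(1-x^2)$, whereas your sign $(-1)^{k-1}$ does not. You only use the absolute bound $|f^{(k)}|\le 2(k-1)!\,\kappa^{-k}(1-x)^{-k}$ for $x\in(0,1)$, so the conclusion is unaffected.
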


Let us recall a well-known result about the existence of exhaustion functions on complete Riemannian manifolds, see Shi \cite{shi972}.
\begin{lemma}\label{l7.2}\emph{(Shi)}Suppose $(M,g_{ij}(x))$ is an n-dimensional complete Riemannian manifold with sectional curvature bounded by $K$, i.e. $|Sect(g)|\leq K$ for some $K\geq 0$. Then there exists a smooth function $\tilde{\rho}$ defined on $M$ such that
\[\begin{aligned}
&1+\rho(x)\leq \tilde{\rho}(x)\leq C+\rho(x)\\
&|\nabla \tilde{\rho}(x)|\leq C,\;\;\;\;\forall x\in M\\
&|\nabla^2 \tilde{\rho}(x)|\leq C,\;\;\;\;\forall x\in M
\end{aligned}\]
where $\rho$ is the distance function with some fixed point and $C$ is a constant depending on $n$ and $K$.
\end{lemma}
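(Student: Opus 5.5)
The statement to prove is Shi's lemma (Lemma \ref{l7.2}): on a complete manifold with $|Sect(g)|\leq K$ there is a smooth exhaustion $\tilde\rho$ with $1+\rho\leq\tilde\rho\leq C+\rho$ and $|\nabla\tilde\rho|,|\nabla^2\tilde\rho|\leq C$. I would smooth the distance function $\rho$ by running a short-time heat flow and using the curvature bounds to control derivatives. The distance function itself is only Lipschitz, with $|\nabla\rho|=1$ almost everywhere, and by Hessian comparison it satisfies $\nabla^2\rho\leq C(1+\rho^{-1})g$ in the barrier sense. There is no lower Hessian bound (cut locus), which is why smoothing is needed.

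\textbf{Step 1: heat flow of $\rho$.} Solve
\[
\partial_t F=\Delta F,\qquad F(\cdot,0)=\rho.
\]
A solution exists on complete manifolds with $Ric\geq -(n-1)K$, because $\rho$ has linear growth and the heat kernel satisfies Li--Yau Gaussian bounds. Set
\[
\tilde\rho(x)=F(x,1)+C_0 .
\]
The function $F-\rho$ is controlled by the Laplacian comparison $\Delta\rho\leq C(1+\rho^{-1})$ together with the Lipschitz bound: $|F(x,t)-\rho(x)|\leq \int H(x,y,t)|\rho(y)-\rho(x)|\,dy\leq C\sqrt t+Ct$ by Gaussian moment bounds. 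This gives $|\tilde\rho-\rho|\leq C$, and choosing $C_0$ yields the two-sided bound $1+\rho\leq\tilde\rho\leq C+\rho$.

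\textbf{Step 2: first derivatives.} Bochner's formula gives
\[
(\partial_t-\Delta)|\nabla F|^2\leq 2(n-1)K|\nabla F|^2 .
\]
Since $|\nabla F|^2\leq 1$ at $t=0$, the maximum principle on complete manifolds (valid for functions of controlled growth, e.g. Karp--Li / Ecker--Huisken type) yields $|\nabla F|^2\leq e^{2(n-1)Kt}$. This bounds $|\nabla\tilde\rho|$.

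\textbf{Step 3: second derivatives (main obstacle).} The evolution of $|\nabla^2F|^2$ contains terms involving $\nabla Rm$, which is not assumed bounded, so the standard Shi-type estimate $t|\nabla^2F|^2+A|\nabla F|^2$ does not close directly. My first attempt would instead bound $\nabla^2F$ locally. Rescale so that $K\leq1$, and on each unit ball pass to the local universal cover via the exponential map on $B_{\pi/2}\subset T_pM$. There the injectivity radius is uniformly bounded below, and by Jost--Karcher harmonic coordinates the metric is $C^{1,\beta}$ with uniform bounds. In these coordinates the heat equation has $C^{\beta}$ coefficients after writing $\Delta=g^{ij}\partial_i\partial_j+(\text{first order})$, where the first-order coefficients are $\Gamma\in C^{\beta}$. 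Parabolic Schauder estimates then give $|\nabla^2F(\cdot,1)|\leq C\sup_{B\times[1/2,1]}|F-F(p,1)|\leq C$, using the gradient bound from Step 2. The pulled-back $F$ solves the heat equation on the cover because the equation is local, and estimates descend since the covering map is a local isometry.

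I expect Step 3 to be the main obstacle. It would need checking that harmonic coordinates give enough regularity with only $|Sect|\leq K$; they give $C^{1,\beta}$, which is sufficient for $C^{2,\beta}$ Schauder estimates. It would also need checking that the barrier-sense Hessian comparison is not needed there. If the second-order Schauder bound fails, the fallback is Shi's original approach: use the Ricci flow $g(t)$ for short time, for which $|\nabla Rm|\leq C/\sqrt t$ and $g(t)$ is uniformly equivalent to $g$. Run the above construction with respect to $g(t_0)$, then convert $\nabla^2_{g(t_0)}$ to $\nabla^2_g$ via $|\nabla_g-\nabla_{g(t_0)}|\leq C$, which follows from integrating $|\nabla Rm|\leq C/\sqrt t$.
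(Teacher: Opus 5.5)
The paper gives no proof of this lemma. It simply quotes it from Shi \cite{shi972}, whose argument goes through a short-time Ricci flow, much like your fallback: pass to $g(t_0)$, which is uniformly equivalent to $g$ and has bounded $\nabla Rm$; smooth there; and return using $|\Gamma_{g(t_0)}-\Gamma_g|\le\int_0^{t_0}Ct^{-1/2}\,dt$.

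Your primary argument is a correct and genuinely different proof that avoids Ricci flow altogether.
\begin{itemize}
\item Heat-flow smoothing of $\rho$ gives the additive bound $|F(\cdot,t)-\rho|\le C\sqrt t$, via stochastic completeness and Gaussian moment bounds.
\item The gradient bound is Bochner/Bakry--\'Emery.
\item The Hessian bound is a local consequence of interior parabolic Schauder estimates in Jost--Karcher harmonic coordinates on the local covers $\exp_p\colon B_{\pi/(2\sqrt K)}(0)\to M$.
\end{itemize}
The regularity needed there is even less than you feared. In harmonic coordinates $\Delta=g^{ij}\partial_i\partial_j$ exactly, so you only use $C^{\beta}$ control of $g^{ij}$, plus boundedness of $\Gamma$ to pass from $\partial^2F$ to $\nabla^2F$. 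Both follow from $|Sect|\le K$ alone, uniformly in $p$. The Ricci-flow route instead gives a global maximum-principle proof: the Bernstein-type estimate for $|\nabla^2F|^2$ closes once $\nabla Rm$ is bounded. That comes at the price of Shi's existence and derivative estimates; your route trades it for standard harmonic-coordinate and Schauder theory.

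Three small points to tighten.
\begin{itemize}
\item In Step 2, the maximum principle for $|\nabla F|^2$ does not apply directly with merely Lipschitz initial data, since $|\nabla F|^2$ need not extend continuously to $t=0$. Use $|\nabla P_t\rho|\le e^{(n-1)Kt}P_t|\nabla\rho|$, or first replace $\rho$ by a Greene--Wu smoothing.
\item In Step 3, the bound on $\sup_{B\times[1/2,1]}|F-F(p,1)|$ needs the time-uniform estimate $|F(\cdot,t)-\rho|\le C$ from Step 1, not only the gradient bound.
\item In the fallback, you must flow $\rho=d_g(o,\cdot)$ itself with respect to $g(t_0)$. Starting from the $g(t_0)$-distance only gives $C^{-1}\rho\le\tilde\rho\le C\rho$, not the required $\tilde\rho\le\rho+C$.
\end{itemize}
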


If we assume that $|T|_g$, $|\nabla T|_g$ and $|Rm|_g$ are bounded, then by Lemma \ref{l2.1} and Lemma \ref{l7.2},
there exists a smooth real function $\tilde{\rho}$ which is uniformly equivalent to the distance function from a fixed point. For any $\rho_0>0$, let $U_{\rho_0}$ be the component of $\{x|\tilde{\rho}(x)<\rho_0\}$ that contains the fixed point as above. Hence $U_{\rho_0}$ will exhaust $M$ as $\rho_0\rightarrow \infty$. For $\rho_0>>1$, we define $\tilde{F}(x)=\mathfrak{F}(\tilde{\rho}(x)/\rho_0)$. Let $\tilde{g}=e^{2\tilde{F}}g$. Then $(U_{\rho_0},\tilde{g})$ is a complete Hermitian metric, see \cite{LT20}, and $\tilde{g}=g$ on $\{\tilde{\rho}(x)<(1-\kappa+\kappa^2)\rho_0\}$. By Lee-Tam \cite{LT20}, we have the following lemma:
\begin{lemma}\label{l7.3}\emph{(Lee-Tam)}
$(U_{\rho_0},\tilde{g})$ has bounded geometry of infinity order.
\end{lemma}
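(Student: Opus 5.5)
The plan is to verify the two conditions of Definition \ref{db2.1} for $(U_{\rho_0},\tilde g)$ with $\tilde g=e^{2\tilde F}g$. We use the holomorphic coordinates of Lemma \ref{l7.4}, rescaled by the conformal factor. For every order of derivatives, the constants will be uniform in the base point $p\in U_{\rho_0}$, though they may depend on $\rho_0$ and $\kappa$.

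\textbf{Step 1: completeness and uniform derivative bounds on the factor.} Completeness of $\tilde g$ is already granted (see \cite{LT20}). Since $\tilde F=\mathfrak F(\tilde\rho/\rho_0)$, the chain rule gives $\nabla^k\tilde F$ as a sum of terms $\rho_0^{-j}\mathfrak F^{(j)}(\tilde\rho/\rho_0)$ multiplied by products of covariant derivatives of $\tilde\rho$. Lemma \ref{l7.2} bounds $\nabla\tilde\rho$ and $\nabla^2\tilde\rho$. For higher derivatives we would first smooth $\tilde\rho$ by a short-time Ricci or heat flow as in Shi \cite{shi972}, which gives bounds on all $\nabla^k\tilde\rho$; we also need the curvature of $g$ bounded, and Lemma \ref{l2.1} supplies this. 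Lemma \ref{l77.3}(ii) gives $|\mathfrak F^{(k)}|\le Ce^{k\mathfrak F}$. Measured with $\tilde g$, each derivative costs a factor $e^{-\tilde F}$. Therefore $|\tilde\nabla^k\tilde F|_{\tilde g}\le C_k$ for all $k$.

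\textbf{Step 2: charts of uniform size.} Fix $p$. If $\tilde F(p)$ is bounded, say $p\in\{\tilde\rho<(1-\kappa+2\kappa^2)\rho_0\}$ or near it, we take the coordinates of Lemma \ref{l7.4} for $g$ directly. Otherwise $e^{\tilde F(p)}$ is large. In that case we take the Lemma \ref{l7.4} coordinates $w$ for $g$ at $p$ and set $z=e^{\tilde F(p)}w$. In the $z$ coordinates,
\[
\tilde g_{i\bar j}(z)=e^{2(\tilde F-\tilde F(p))}g_{i\bar j}(e^{-\tilde F(p)}z).
\]
On a $g$-ball of radius $re^{-\tilde F(p)}$ around $p$, Step 1 shows that $\tilde F-\tilde F(p)$ is uniformly bounded, because its $g$-gradient is $O(e^{\tilde F})$. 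The rescaled metric $g_{i\bar j}(e^{-\tilde F(p)}z)$ is close to $\delta_{ij}$ with small derivatives. So both conditions (i) and (ii) of Definition \ref{db2.1} hold on the disc $D(r)$ with $r$ uniform. Higher $z$-derivatives of $\tilde g_{i\bar j}$ then follow from Step 1 together with the rescaled derivatives of $g$.

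\textbf{Step 3 and main obstacle.} The hard point is that Lemma \ref{l7.4} controls $g$ only up to second order, and only at the center $p$. To get charts on a disc of fixed radius with $C^{k}$ bounds for every $k$, we first run Shi's flow, or the Hermitian analogue used in \cite{LT20}, for a short time. This replaces $g$ by a uniformly equivalent metric $\hat g$ whose torsion and curvature have all covariant derivatives bounded.

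For $\hat g$, the injectivity radius may still be zero. So we build the holomorphic charts on the tangent space through the exponential map of $\hat g$, which gives quasi-bounded geometry charts with all derivatives controlled. Since $\hat g$ and $g$ are uniformly equivalent, conformally changing $\hat g$ instead of $g$ does not affect the assertion.

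Injectivity on the scaled chart is then recovered near the boundary region, where the conformal factor blows up. There the scaled $\tilde g$-balls have $g$-radius $\to0$, while the injectivity of the local exponential coordinates is controlled on small scales. The interior region $\{\tilde\rho\le(1-\kappa+2\kappa^2)\rho_0\}$ is compact, so a uniform radius exists there trivially. Combining the two regions gives bounded geometry of infinite order.
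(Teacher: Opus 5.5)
Your Step 2 rescaling is the right mechanism; the paper only cites \cite{LT20} for this lemma, and the region where the conformal factor blows up is handled by rescaling of this kind. However, the way you supply the charts that Step 2 needs, in Steps 1 and 3, has a genuine gap.

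The assertion that ``since $\hat g$ and $g$ are uniformly equivalent, conformally changing $\hat g$ instead of $g$ does not affect the assertion'' is false. Condition (ii) of Definition \ref{db2.1} requires $C^{k+\alpha}$ bounds on the components of $e^{2\tilde F}g$ itself. The bound $C^{-1}g\le \hat g\le Cg$ gives no control on the derivatives of $g_{i\bar j}$ in charts adapted to $\hat g$; only condition (i) transfers.

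The flow is also unsupported. For non-K\"ahler $g$, Shi's Ricci flow does not preserve $J$-invariance. You give no reason why a Chern--Ricci flow would produce a metric with all derivatives of torsion and curvature bounded, assuming only $|T|+|\nabla T|+|Rm|<\infty$. Moreover, the exhaustion $(U_{\rho_0},\tilde g)$ is the device Lee--Tam use to build their flow, so appealing to that flow here risks circularity. Exponential-map charts are not holomorphic, so extracting holomorphic coordinates with uniform estimates from them needs an argument you do not give. Finally, smoothing $\tilde\rho$ in Step 1 changes $U_{\rho_0}$ and $\tilde F$, and it gives derivative bounds relative to the flowed metric rather than $g$.

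The missing observation is that none of this is needed. Since $\tilde\rho\ge 1+\rho$, we have $U_{\rho_0}\subset B_{\rho_0-1}(o)$, so $\overline{U_{\rho_0}}$ is compact in $M$. This covers the whole closure, including the collar near $\partial U_{\rho_0}$, not only the interior region you treat as compact; the paper uses this compactness in Lemma \ref{l7.5}. You already allow constants depending on $\rho_0$. So finitely many holomorphic charts of a fixed radius, covering a neighborhood of $\overline{U_{\rho_0}}$, give $C^k$ bounds for $g_{i\bar j}$ and $\tilde\rho$ for every $k$, and injectivity is automatic.

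Feeding these charts into Step 2 finishes the proof:
\begin{itemize}
\item By the chain rule and Lemma \ref{l77.3}(ii), $|\partial_w^k\tilde F|\le C_k e^{k\tilde F}$.
\item In particular $e^{-\tilde F}$ is Lipschitz in $w$. Hence $\tilde F-\tilde F(p)$ stays bounded on $|w|<re^{-\tilde F(p)}$ for a small uniform $r$.
\item Because $\tilde F\to\infty$ at $\partial U_{\rho_0}$, this bound forces that disc to lie in $U_{\rho_0}$.
\item In the coordinates $z=e^{\tilde F(p)}w$, the components $e^{2(\tilde F-\tilde F(p))}g_{i\bar j}(e^{-\tilde F(p)}z)$ are uniformly equivalent to $\delta_{ij}$ on $D(r)$. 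They also have uniformly bounded $z$-derivatives of all orders.
\end{itemize}
Neither Lemma \ref{l7.4} nor any flow is needed.
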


\begin{lemma}\label{l7.5}
Suppose that
\begin{equation}
\max\Big\{|T|_g, |\nabla_g T|_g, |Rm|_g, \sum_{k=0}^2|\nabla_g^k \chi|_g, \sum_{k=0}^2|\nabla_g^k h|_g\Big\}\leq K,
\end{equation}
where $\chi$ and $h$ are as in \eqref{e1.1}. Then we have
\begin{equation}\label{e7.25}
\max\Big\{|\tilde{T}|_{\tilde{g}}, |\nabla_{\tilde{g}} \tilde{T}|_{\tilde{g}}, |Rm(\tilde{g})|_{\tilde{g}}, \sum_{k=0}^2|\nabla_{\tilde{g}}^k (e^{2\tilde{F}}\chi)|_{\tilde{g}}, \sum_{k=0}^2|\nabla_{\tilde{g}}^k h|_{\tilde{g}}\Big\}\leq K'
\end{equation}
provided $\rho_0$ is large enough, where $K'$ is a constant independent of $\rho_0$. Moreover
\begin{equation}\label{e7.26}
|\nabla_{\tilde{g}} Rm({\tilde{g}})|_{\tilde{g}}+|\nabla_{\tilde{g}}^3 (e^{2\tilde{F}}\chi)|_{\tilde{g}}+|\nabla_{\tilde{g}}^3 f|_{\tilde{g}}<\infty.
\end{equation}
\end{lemma}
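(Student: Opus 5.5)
We compute everything under the conformal change $\tilde g=e^{2\tilde F}g$, where $\tilde F=\mathfrak F(\tilde\rho/\rho_0)$. The plan is to use standard transformation formulas for the Chern connection. The key quantitative input is this: derivatives of $\tilde F$ pick up powers of $\rho_0^{-1}$ times derivatives of $\mathfrak F$. Lemma \ref{l77.3}(ii) then guarantees that $e^{-k\mathfrak F}\mathfrak F^{(k)}$ is bounded, and every $\tilde g$-norm of a $k$-tensor carries a factor $e^{-k\tilde F}$. So each potentially unbounded $\mathfrak F^{(k)}$ is compensated exactly by the conformal weight.

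\textbf{Step 1: derivatives of $\tilde F$.} By Lemma \ref{l2.1} and Lemma \ref{l7.2}, $|\nabla_g\tilde\rho|_g+|\nabla^2_g\tilde\rho|_g\le C$. Hence
\[
|\partial\tilde F|_g\le C\rho_0^{-1}\mathfrak F',\qquad |\nabla_g^2\tilde F|_g\le C(\rho_0^{-2}\mathfrak F''+\rho_0^{-1}\mathfrak F'),
\]
and after weighting we get $|\nabla^k\tilde F|_{\tilde g}\le C$ for $k=1,2$.

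For the third-order statement \eqref{e7.26} we would need $\nabla^3\tilde\rho$. Lemma \ref{l7.2} does not control this uniformly, so we only claim finiteness there. That finiteness comes from smoothness and from the fact that on the region near $\partial U_{\rho_0}$ all quantities are comparable to those of the model, as in Lee--Tam.

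\textbf{Step 2: torsion and its derivative.} The Chern connection of $\tilde g$ is
\[
\tilde\Gamma^k_{ij}=\Gamma^k_{ij}+2\delta^k_j\partial_i\tilde F,\qquad\text{so}\qquad \tilde T^k_{ij}=T^k_{ij}+2(\delta^k_j\partial_i\tilde F-\delta^k_i\partial_j\tilde F).
\]
It follows that $|\tilde T|_{\tilde g}\le e^{-\tilde F}(|T|_g+C|\partial\tilde F|_g)\le C$. Differentiating once more gives $|\tilde\nabla\tilde T|_{\tilde g}\le C$, using Step 1 together with $e^{-2\tilde F}|\nabla^2\tilde F|_g\le C$.

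\textbf{Step 3: curvature.} We have
\[
\tilde R_{i\bar jk\bar l}=e^{2\tilde F}\bigl(R_{i\bar jk\bar l}-2\tilde F_{k\bar l}\,g_{i\bar j}\bigr),
\]
so $|Rm(\tilde g)|_{\tilde g}\le e^{-2\tilde F}|Rm|_g+Ce^{-2\tilde F}|\partial\bar\partial\tilde F|_g\le C$.

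\textbf{Step 4: $\chi$ and $h$.} For $e^{2\tilde F}\chi$ and $h$, we expand $\tilde\nabla^k$ as $\nabla^k$ plus terms polynomial in $\partial\tilde F$ and its derivatives, using the $\tilde\Gamma$ formula above. Each term contributes a bounded quantity after the conformal weights are applied. The hypotheses $|\nabla^k_g\chi|_g,|\nabla^k_g h|_g\le K$ for $k\le2$ feed in directly.

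Throughout, constants depend only on $K$, $n$ and $\kappa$, provided $\rho_0\ge1$. This gives \eqref{e7.25} with $K'$ independent of $\rho_0$.

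The main obstacle is the bookkeeping in Steps 1 and 4. One must check that every appearance of $\mathfrak F^{(j)}$ is matched by enough factors $e^{-\tilde F}$ from the norms; the cross terms such as $(\mathfrak F')^2$ in Step 1 are covered by $e^{-2\mathfrak F}(\mathfrak F')^2\le C$. A second, smaller obstacle is the third-order claim \eqref{e7.26}: since it asks only for finiteness, we would rely on the fact that $g$ is smooth and that $\nabla Rm$ and $\nabla^3\chi$ of $g$ are locally bounded. This is a weaker statement that is not uniform in $\rho_0$.
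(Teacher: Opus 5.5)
Your proposal is correct and follows essentially the paper's route. You bound $\nabla\tilde F$ and $\nabla^2\tilde F$ in $\tilde g$-norm via Lemma \ref{l7.2} and the weighted bounds of Lemma \ref{l77.3}(ii), then feed these into the conformal-change formulas for torsion, curvature and covariant derivatives; the paper computes in the coordinates of Lemma \ref{l7.4} instead of using your invariant formulas, and for the third-order finiteness it cites Lemma \ref{l7.3} for $\nabla_{\tilde g}Rm(\tilde g)$ and compactness of $\overline{U_{\rho_0}}$ with Lemma \ref{l77.3} for the rest, which is just your bookkeeping at order three (boundedness of $e^{-3\mathfrak{F}}\mathfrak{F}'''$ together with boundedness of $\nabla^3\tilde\rho$, $\nabla Rm$, $\nabla^3\chi$ on the compact set $\overline{U_{\rho_0}}$), with constants depending on $\rho_0$.
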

\begin{proof}
By Lemma \ref{l77.3} and \ref{l7.2}, we have
\begin{equation}\label{e7.27}
\begin{aligned}
|\nabla \tilde{F}|_{\tilde{g}}&=e^{-\tilde{F}}|\nabla \tilde{F}|_g\\
&=e^{-\tilde{F}}\rho_0^{-1}\mathfrak{F}'|\nabla \tilde{\rho}|\\
&\leq C_1 \rho_0^{-1},
\end{aligned}
\end{equation}
where $C_1$ is a constant independent of $\rho_0$.

Similarly,
\begin{equation}\label{e7.28}
|\nabla^2 \tilde{F}|_{\tilde{g}}\leq C_1 \rho_0^{-1}
\end{equation}
for a possible large $C_1$. For convenience, we can calculate in the local holomorphic coordinate system as in Lemma \ref{l7.4}, by \eqref{e7.27} and \eqref{e7.28}, we can prove \eqref{e7.25}.

Since $(U_{\rho_0},\tilde{g})$ has bounded geometry of infinity order , we can use the coordinate system as in Definition \ref{db2.1} to prove that $|\nabla_{\tilde{g}} Rm({\tilde{g}})|_{\tilde{g}}<\infty$. Note that $\overline{U_{\rho_0}}$ is compact, by Lemma \ref{l77.3}, we have $|\nabla_{\tilde{g}}^3 (e^{2\tilde{F}}\chi)|_{\tilde{g}}+|\nabla_{\tilde{g}}^3 f|_{\tilde{g}}<\infty$, then we can prove \eqref{e7.26}.
\end{proof}

Finally, we can prove the existence theorem for the solutions of \eqref{e1.1}:

\begin{proof}[Proof of Theorem \ref{t7.6}]
Let $\{\rho_k\}$ be a sequence such that $\rho_k\rightarrow \infty$, on $(U_{\rho_k},e^{2\tilde{F}}\alpha)$, we consider the following equation:
\begin{equation}\label{e7.30}
F((e^{2\tilde{F}}\alpha)^{i\bar{p}}((e^{2\tilde{F}}\chi)_{j\bar{p}}+(u_k)_{j\bar{p}}))=h(x)+\epsilon u_k.
\end{equation}
It is easy to verify that the above equation satisfies the conditions in Theorem \ref{t7.5}, hence we obtain a solution $u_k\in C^5(M)$ for \eqref{e7.30} such that
\begin{equation}\label{e7.31}
C^{-1}e^{2\tilde{F}}\alpha\leq e^{2\tilde{F}}\chi+\sqrt{-1}\partial\bar{\partial}u_k\leq Ce^{2\tilde{F}}\alpha,
\end{equation}
where $C$ is a positive constant. Provided $\rho_k$ is large enough, by Lemma \ref{l7.5}, the sequence $\{u_k\}$ has uniform priori estimates up to order 2 on $(U_{\rho_k},e^{2\tilde{F}}\alpha)$, hence the constant $C$ in \eqref{e7.31} is independent of $\rho_k$. Since $\tilde{F}=0$ on $\{\tilde{\rho}(x)<(1-\kappa+\kappa^2)\rho_k\}$, for each compact submanifold $\Omega$ of $M$, applying the Evans-Krylov Theorem and Schauder interior estimates, we obtain that the sequence $\{u_k\}$ has uniform priori estimates up to order $5$. By the Arzel\`{a}-Ascoli Theorem, we can take a subsequence $\{u_{k_m}\}$ of $\{u_k\}$ such that $\{u_{k_m}\}$ converges in $C^4$ on $\Omega$. Let $\{\Omega_i\}$ be an exhaustion of $M$ by compact subdomains, continuing in this way, we obtain a global solution $u\in C^4(M)$ for \eqref{e1.1} such that
\begin{equation}
C^{-1}\alpha\leq \chi+\sqrt{-1}\partial\bar{\partial}u\leq C\alpha.
\end{equation}

By differentiating equation \eqref{e1.1} and using the interior regularity theorem for linear elliptic PDEs(see \cite{GT98}, Theorem 6.17), we can prove that $u\in C^\infty(M)$.

The uniqueness of the solution can be proved by the the generalized maximum principle(see Lemma \ref{l2.5}), which is similar to the proof of Theorem \ref{t7.5}.
\end{proof}

As a geometric application of \eqref{e1.1}, we can construct complete K\"{a}hler-Einstein metrics on certain K\"{a}hler manifolds.
\begin{lemma}\label{l7.8}
Let $(M,\alpha)$ be a complete K\"{a}hler manifold. Suppose that
\begin{equation}
\sum_{k=0}^2|\nabla_{\alpha}^k Rm|_{\alpha}<\infty\;\;{\rm and}\;\;{\rm Ric}(\alpha)\leq c\alpha
\end{equation}
for some constant $c<0$. Then $M$ admits a complete K\"{a}hler-Einstein metric.
\end{lemma}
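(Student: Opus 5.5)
We reduce Lemma \ref{l7.8} to the complex Monge-Amp\`{e}re case of Theorem \ref{t7.6}. Set $\chi:=-\mathrm{Ric}(\alpha)$. The hypothesis $\mathrm{Ric}(\alpha)\leq c\alpha$ with $c<0$ gives $\chi\geq |c|\alpha$. Since $|Rm|_\alpha$ is bounded, $\mathrm{Ric}(\alpha)$ is a trace of $Rm$, so $\chi\leq C\alpha$. Hence $\chi$ is a K\"{a}hler form uniformly equivalent to $\alpha$. We look for $\omega=\chi+\sqrt{-1}\partial\bar{\partial}u$ with $\mathrm{Ric}(\omega)=-\omega$. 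Writing $\omega^n=e^{u+h}\alpha^n$, we get
\[
\mathrm{Ric}(\omega)=\mathrm{Ric}(\alpha)-\sqrt{-1}\partial\bar{\partial}(u+h)=-\chi-\sqrt{-1}\partial\bar{\partial}u-\sqrt{-1}\partial\bar{\partial}h ,
\]
so the Einstein condition holds if we take $h:=\log(\chi^n/\alpha^n)$. Thus we must solve
\[
\log\frac{(\chi+\sqrt{-1}\partial\bar{\partial}u)^n}{\alpha^n}-h=u.
\]
With $f=\log\lambda_1\cdots\lambda_n$ on $\Gamma_n$, this is \eqref{e1.1} with $\epsilon=1$.

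\textbf{Normalisation.} To match the hypothesis $F(\alpha^{i\bar p}\chi_{j\bar p})=0$ of Theorem \ref{t7.6}, we replace $\alpha$ by the metric $\alpha':=\chi$ itself, which is complete since it is uniformly equivalent to $\alpha$. The equation becomes $\log\big((\chi+\sqrt{-1}\partial\bar\partial u)^n/\chi^n\big)=u$, so the right-hand side function is $0$, and $F(\chi^{i\bar p}\chi_{j\bar p})=\log 1=0$.

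\textbf{Structure conditions.} Conditions \textbf{(a1)}-\textbf{(a4)} are standard for $\log\det$ on $\Gamma_n$; in particular $\sup_{\partial\Gamma_n}f=-\infty$ gives \textbf{(a3)}. The function $0$ is a $\mathcal{C}$-subsolution, because $\lambda(\chi)$ lies in $\Gamma_n$ and $\log\prod(\lambda_j+t\delta_{ij})\to\infty$ as $t\to\infty$.

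\textbf{Bounds for the new background.} Theorem \ref{t7.6} needs bounds on torsion, curvature and derivatives of the data with respect to $\alpha'=\chi$. Since $\chi$ is K\"{a}hler, its torsion vanishes. Its Chern curvature and $\nabla_\chi^k\chi=0$ must be controlled through $\alpha$. Write $\nabla_\chi-\nabla_\alpha=\chi^{-1}\nabla_\alpha\chi$. Using $\chi=-\mathrm{Ric}(\alpha)$, we have $\nabla_\alpha\chi=-\nabla_\alpha\mathrm{Ric}$ and $\nabla_\alpha^2\chi=-\nabla^2_\alpha\mathrm{Ric}$, and these are bounded by the hypothesis $\sum_{k\le 2}|\nabla^k_\alpha Rm|<\infty$. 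It follows that $|Rm(\chi)|_\chi$ is bounded, since the curvature difference involves $\nabla_\alpha^2\chi$ and $(\nabla_\alpha\chi)^2$.

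\textbf{Applying the theorem.} Theorem \ref{t7.6} then gives a unique smooth $u$ with $\omega=\chi+\sqrt{-1}\partial\bar\partial u$ uniformly equivalent to $\chi$, hence to $\alpha$. Therefore $\omega$ is complete, K\"{a}hler, and satisfies $\mathrm{Ric}(\omega)=-\omega$.

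\textbf{Alternative normalisation.} If we prefer to keep $\alpha$ as the background metric, the data $h=\log(\chi^n/\alpha^n)$ are bounded with two derivatives, by the same bounds on $\nabla^k_\alpha\mathrm{Ric}$ for $k\le 2$. Then $F(\alpha^{-1}\chi)=h$, and after shifting $u\mapsto u-h$ the function $h$ is absorbed into the right-hand side. The only remaining issue is that $\chi$ itself changes to $\chi-\sqrt{-1}\partial\bar\partial h$. This requires $\nabla^2 h$ small relative to $\chi$, which is not automatic. For that reason we prefer the normalisation $\alpha'=\chi$.

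\textbf{Main obstacle.} The step we expect to be hardest is the check of the derivative hypotheses of Theorem \ref{t7.6} for the new background, in particular that $|Rm(\chi)|_\chi<\infty$ follows from $\sum_{k\le2}|\nabla^k Rm|_\alpha<\infty$. This is exactly why the lemma assumes two covariant derivatives of $Rm$.
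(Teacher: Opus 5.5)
Your reduction has the right architecture: switch the background metric to $\chi=-\mathrm{Ric}(\alpha)$ and apply Theorem \ref{t7.6} with $\log\det$. That is also what the paper does with $h_0=-\mathrm{Ric}(\alpha)$. However, the equation you write down is wrong, and the proof fails as stated. Your own computation gives $\mathrm{Ric}(\omega)=-\omega-\sqrt{-1}\partial\bar\partial h$ when $\omega^n=e^{u+h}\alpha^n$. The Einstein condition therefore forces $\sqrt{-1}\partial\bar\partial h=0$, i.e.\ $h=0$; it does not allow $h=\log(\chi^n/\alpha^n)$. With your choice, the normalised equation is $(\chi+\sqrt{-1}\partial\bar\partial u)^n=e^{u}\chi^n$. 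This is solved by $u\equiv 0$, since $F(\chi^{-1}\chi)=\log 1=0=0+\epsilon\cdot 0$, and by the uniqueness in Theorem \ref{t7.6} it is the only solution. Your output metric is therefore $\omega=\chi=-\mathrm{Ric}(\alpha)$, with $\mathrm{Ric}(\omega)=\mathrm{Ric}(\chi)$. This equals $-\chi$ only if $\chi$ was already K\"ahler--Einstein, so your final claim $\mathrm{Ric}(\omega)=-\omega$ is false in general. The ``Alternative normalisation'' paragraph rests on the same incorrect equation.

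The correct equation is $(\chi+\sqrt{-1}\partial\bar\partial u)^n=e^{u}\alpha^n$, for which $\mathrm{Ric}(\omega)=\mathrm{Ric}(\alpha)-\sqrt{-1}\partial\bar\partial u=-\omega$. With background $\chi$ it reads
\[
\log\frac{(\chi+\sqrt{-1}\partial\bar\partial u)^n}{\chi^n}=u+\log\frac{\alpha^n}{\chi^n}.
\]
So the right-hand-side datum is $h=\log(\alpha^n/\chi^n)$, not $0$. This is exactly the paper's equation $\det((h_0)_{i\bar j}+u_{i\bar j})=e^{u}e^{h}\det((h_0)_{i\bar j})$ with $h=\log\frac{\det\alpha}{\det(-\mathrm{Ric}(\alpha))}$.

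Your list of hypotheses to check is consequently incomplete. Besides $T(\chi)=0$ and $|Rm(\chi)|_\chi<\infty$, you must verify $\sum_{k\le 2}|\nabla^k_\chi h|_\chi<\infty$, and $\inf_M h>-\infty$ for \textbf{(a3)}. Both follow from the uniform equivalence of $\chi$ and $\alpha$ together with the bounds on $\nabla_\alpha^k\mathrm{Ric}(\alpha)$ for $k\le 2$. So the two-derivative hypothesis on $Rm$ is used for the datum $h$ as well, not only for $Rm(\chi)$. Once you make this correction, your argument coincides with the paper's.
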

\begin{proof}
When $M$ is compact, the theorem was proved by Aubin \cite{Aubin76} and Yau \cite{Yau78}. Therefore we only need to consider the case when $M$ is noncompact.

Define $h_0=-{\rm Ric}(\alpha)$ as a K\"{a}hler metric on $M$, consider the following equation on $M$:
\begin{equation}\label{e7.34}
\det((h_0)_{i\bar{j}}+u_{i\bar{j}})=e^{u}e^h\det((h_0)_{i\bar{j}}),
\end{equation}
where
\begin{equation}\label{e77.10}
h:=\log \frac{\det \alpha}{\det (-{\rm Ric}(\alpha))}.
\end{equation}
For convenience, we can calculate in the local holomorphic coordinate system as in Lemma \ref{l7.4}, then $h_0$ is uniformly equivalent to $\alpha$ and
\begin{equation}
|Rm(h_0)|_{h_0}+\sum_{k=0}^2|\nabla_{h_0}^k h|_{h_0}<\infty.
\end{equation}
By Theorem \ref{t7.6}, the equation \eqref{e7.34} has a solution $u\in C^\infty(M)$ such that $((h_0)_{i\bar{j}}+u_{i\bar{j}})$ is uniformly equivalent to $(\alpha_{i\bar{j}})$, and
\begin{equation}
\begin{aligned}
{\rm Ric}((h_0)_{p\bar{q}}+u_{p\bar{q}})_{i\bar{j}}&=-\frac{\partial^2}{\partial z^i\partial\bar{z}^j}[\log \det((h_0)_{p\bar{q}}+u_{p\bar{q}})]\\
&=-\frac{\partial^2}{\partial z^i\partial\bar{z}^j}[u+\log \frac{\det \alpha}{\det (-{\rm Ric}(\alpha))}+\log \det((h_0)_{p\bar{q}})]\\
&=-((h_0)_{i\bar{j}}+u_{i\bar{j}}).
\end{aligned}
\end{equation}
Finally, $((h_0)_{i\bar{j}}+u_{i\bar{j}})$ is a complete K\"{a}hler-Einstein metric on $M$.
\end{proof}

Next, we use a method similar to that of Wu-Yau \cite{WY20} to remove the condition $\sum_{k=1}^2|\nabla_{\alpha}^k Rm|_{\alpha}<\infty$ in Lemma \ref{l7.8}.
\begin{lemma}\label{l7.9}
Let $(M,\omega)$ be an $n$-dimensional complete noncompact K\"{a}hler manifold such that
\begin{equation}\label{e7.39}
|Rm|\leq \kappa_1,\;\;\;\;{\rm Ric}(\omega)\leq -\kappa_2\omega
\end{equation}
for two constants $\kappa_1,\kappa_2>0$. Then, there exists another K\"{a}hler metric $\tilde{\omega}$ such that satisfying
\begin{equation}\label{e7.40}
C^{-1}\omega\leq \tilde{\omega}\leq C\omega,
\end{equation}
\begin{equation}\label{e7.41}
{\rm Ric}(\tilde{\omega})\leq -\tilde{\kappa}_2\tilde{\omega},
\end{equation}
\begin{equation}\label{e7.42}
|\tilde{\nabla}^q \tilde{R}m|_{\tilde{\omega}}\leq C_q,\;\;\;q=0,1,2,\ldots
\end{equation}
where $\tilde{\nabla}^q \tilde{R}m$ denotes the qth covariant derivatives of the curvature tensor $\tilde{R}m$ of $\tilde{\omega}$ with respect to $\tilde{\omega}$, and the positive constants $C=C(n)$, $\tilde{\kappa}_2=\tilde{\kappa}_2(n,\kappa_1,\kappa_2)$, $C_q=C_q(n,q,\kappa_1)$ depend only on the parameters in their parentheses.
\end{lemma}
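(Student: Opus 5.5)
The plan is to run Shi's Kähler--Ricci flow for a short, uniform time and take $\tilde\omega=\omega(t_0)$, following Wu--Yau \cite{WY20}. By Shi's short-time existence theorem for complete Kähler manifolds with bounded curvature, the flow
\[
\partial_t\omega(t)=-{\rm Ric}(\omega(t)),\qquad \omega(0)=\omega,
\]
has a complete solution on $M\times[0,T]$ with $T=T(n,\kappa_1)>0$. Shi's derivative estimates give
\[
|Rm(\omega(t))|\le C(n)\kappa_1,\qquad |\nabla^q Rm(\omega(t))|_{\omega(t)}\le \frac{C(n,q,\kappa_1)}{t^{q/2}}
\]
for $t\in(0,T]$. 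We fix $t_0=\min\{T,1/(C(n)\kappa_1)\}$, which depends only on $n$ and $\kappa_1$, and set $\tilde\omega=\omega(t_0)$. Then \eqref{e7.42} holds with $C_q=C(n,q,\kappa_1)t_0^{-q/2}$.

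Uniform equivalence \eqref{e7.40} comes from integrating the flow. Since $|{\rm Ric}(\omega(t))|_{\omega(t)}\le C(n)\kappa_1$, we have $|\partial_t\omega|\le C(n)\kappa_1\,\omega(t)$. Hence $e^{-C\kappa_1 t}\omega\le\omega(t)\le e^{C\kappa_1 t}\omega$, and by the choice of $t_0$ the exponent is bounded by a dimensional constant. The fact that the flow starts from a Kähler metric, and Kählerness is preserved, keeps $\tilde\omega$ Kähler.

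The main obstacle is \eqref{e7.41}, which requires preserving a negative upper Ricci bound. Curvature bounds at $t_0$ alone give no sign information, so we study the evolution of the Ricci form. Along the Kähler--Ricci flow,
\[
\partial_t R_{i\bar j}=\Delta R_{i\bar j}+R_{i\bar j k\bar l}R_{l\bar k}-R_{i\bar p}R_{p\bar j}.
\]
Consider the largest eigenvalue $\mu(x,t)$ of ${\rm Ric}(\omega(t))$ with respect to $\omega(t)$, or, to avoid nonsmoothness, the tensor $S=\mathrm{Ric}+\kappa_2 e^{-Ct}\omega(t)$ with a suitable $C$. Using $|Rm|\le C(n)\kappa_1$, the reaction terms are bounded by $C(n)\kappa_1$ times $S$ plus terms controlled in terms of $S$. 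We then apply a maximum principle on the complete noncompact manifold; on a bounded-curvature flow this is Ecker--Huisken type, or it follows from Proposition \ref{pb4.2} applied with Shi's exhaustion function Lemma \ref{l7.2} at each time, together with a barrier in time. From this we get $\mathrm{Ric}(\omega(t))\le -\kappa_2e^{-C\kappa_1 t}\omega(t)$ for $t\le t_0$. The term $\partial_t\omega=-{\rm Ric}$ contributes a favorable sign when Ricci is negative, which helps here.

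If the tensor maximum principle proves awkward, a fallback is a direct perturbation bound: $|\partial_t {\rm Ric}|\le |\Delta {\rm Ric}|+C\kappa_1^2\le C(n,\kappa_1)t^{-1/2}$ by Shi's estimate for $\nabla^2 Rm$. Integrating in time gives $|{\rm Ric}(\omega(t_0))-{\rm Ric}(\omega)|\le C t_0^{1/2}$, measured after comparing the metrics via \eqref{e7.40}. Shrinking $t_0$ further, depending also on $\kappa_2$, then keeps ${\rm Ric}(\tilde\omega)\le-\tfrac{\kappa_2}{2}C^{-1}\tilde\omega$. This yields \eqref{e7.41} with $\tilde\kappa_2=\tilde\kappa_2(n,\kappa_1,\kappa_2)$, at the cost of letting $C_q$ also depend on $\kappa_2$ through $t_0$. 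Since the stated dependence $C_q(n,q,\kappa_1)$ does not allow this, we try the maximum-principle route first.
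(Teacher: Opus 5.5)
Your overall plan (Shi's K\"{a}hler--Ricci flow for a short time, Shi's derivative bounds for \eqref{e7.42}, integrating the flow for \eqref{e7.40}, and a noncompact maximum principle for the largest Ricci eigenvalue) is the paper's. However, the step you single out as the main obstacle does not work as claimed. For $\partial_t\omega=-{\rm Ric}$ and $S={\rm Ric}+\lambda(t)\omega(t)$ one computes, in an orthonormal frame,
\begin{equation*}
\partial_t S_{i\bar j}=\Delta S_{i\bar j}+R_{i\bar j k\bar l}S_{l\bar k}-S_{i\bar p}S_{p\bar j}+(\lambda'+\lambda^2)\,g_{i\bar j}.
\end{equation*}
So the terms linear in $\lambda$ cancel, and $\partial_t\omega=-{\rm Ric}$ leaves a net unfavorable $+\lambda^2$.

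At a point where $S\le 0$ and $S(v,\bar v)=0$, the quadratic term vanishes. But $R_{v\bar v k\bar l}S_{l\bar k}=\sum_k R_{v\bar v k\bar k}\,s_k$ involves the \emph{other} eigenvalues $s_k\le 0$ of $S$. These can be of size $C(n)\kappa_1$, and the bisectional curvatures have no sign. So this term is an additive error of size up to $C(n)\kappa_1^2$, not a multiple of $S(v,\bar v)=0$, and Hamilton's null-eigenvector condition fails. Closing any barrier of your form then requires $\lambda'+\lambda^2+C(n)\kappa_1^2\le 0$. This forces $\lambda$ to reach $0$ before time $\kappa_2/(C(n)\kappa_1^2)$. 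Since $\kappa_2$ may be much smaller than $\kappa_1$, you cannot get ${\rm Ric}\le-\kappa_2e^{-C\kappa_1 t}\omega(t)$ up to a time depending only on $n,\kappa_1$. What the maximum principle actually yields is the paper's linear bound $r(x,t)\le -\kappa_2+C(n)\kappa_1^2\,t$ (via Lemma \ref{lb1}). After that one takes $t_0=\min\{\kappa_2/(2C(n)\kappa_1^2),T\}$.

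Your fallback fails for a separate reason. Shi's estimate for second derivatives is $|\nabla^2 Rm|\le C(n,\kappa_1)\,t^{-1}$, not $t^{-1/2}$, so $\int_0^{t_0}|\Delta{\rm Ric}|\,dt$ diverges. Starting from a metric with merely bounded curvature, you cannot control ${\rm Ric}(\omega(t))-{\rm Ric}(\omega)$ pointwise, and the maximum principle is genuinely needed.

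The correct argument is therefore the paper's, with $t_0$ depending on $\kappa_2$. Then $C_q=C(n,q,\kappa_1)\,t_0^{-q/2}$ depends on $\kappa_2$ too. Your concern about the stated dependence $C_q(n,q,\kappa_1)$ is legitimate. The paper's proof has the same feature: its assertion that \eqref{e7.42} holds uniformly for all $0<t\le T$ overlooks the $t^{-q/2}$ blow-up in Shi's bounds. This is harmless for Theorem \ref{t1.1}, but the maximum-principle route does not remove it.
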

\begin{proof}
In Lemma 13 of \cite{WY20}, the authors use some results from Shi \cite{shi97} to obtain that there exists a K\"{a}hler metric $g(x,t)$ satisfying the following K\"{a}hler-Ricci flow:
\begin{equation}
\left\{ \begin{aligned}
   &\frac{\partial }{\partial t}{g_{\alpha\overline{\beta}}}(x,t)=-4R_{\alpha\overline{\beta}} (x,t), \\
      &g_{\alpha\overline{\beta}}(x,0)= g_{\alpha\overline{\beta}}(x)
\end{aligned} \right.
\end{equation}
for all $0\leq t\leq T$, and the form $\omega(x,t)=\sqrt{-1}{g_{\alpha\overline{\beta}}}(x,t)dz^\alpha\wedge d\bar{z}^\beta$ satisfies \eqref{e7.40} and \eqref{e7.42} when $0<t\leq T$. Where $T:=C_1(n)/\kappa_1$ and $g(x)$ is the K\"{a}hler metric with respect to $\omega$. Here and below, we denote some positive constants depending only on $n$ by $C_j(n)$.

Next we will show that there exists a small $0<t_0\leq T$ such that $\omega(x,t)$ also satisfies \eqref{e7.41} whenever $0<t\leq t_0$. Recall that the curvature tensor satisfies the evolution equation (see, for example, Shi \cite{shi97}, p. 143, (122))
\[
\begin{aligned}
\frac{\partial}{\partial t}R_{\alpha\bar{\beta}\gamma\bar{\sigma}}=&4\Delta R_{\alpha\bar{\beta}\gamma\bar{\sigma}}+4g^{\mu\bar{\nu}}g^{\rho\bar{\tau}}(R_{\alpha\bar{\beta}\mu\bar{\tau}}R_{\gamma\bar{\sigma}\rho\bar{\nu}}+R_{\alpha\bar{\sigma}\mu\bar{\tau}}R_{\gamma\bar{\beta}\rho\bar{\nu}}-R_{\alpha\bar{\nu}\gamma\bar{\tau}}R_{\mu\bar{\beta}\rho\bar{\sigma}})\\
&-2g^{\mu\bar{\nu}}(R_{\alpha\bar{\nu}}R_{\mu\bar{\beta}\rho\bar{\tau}}+R_{\mu\bar{\beta}}R_{\alpha\bar{\nu}\rho\bar{\tau}}+R_{\gamma\bar{\nu}}R_{\alpha\bar{\beta}\mu\bar{\sigma}}+R_{\mu\bar{\sigma}}R_{\alpha\bar{\beta}\rho\bar{\nu}}),
\end{aligned}
\]
where $\Delta=\Delta_{\omega(x,t)}=\frac{1}{2}g^{\alpha\bar{\beta}}(x,t)(\nabla_{\bar{\beta}}\nabla_\alpha+\nabla_\alpha\nabla_{\bar{\beta}})$. Then we obtain the evolution equation for Ricci curvature:
\begin{equation}
\begin{aligned}
\frac{\partial}{\partial t}R_{\alpha\bar{\beta}}&=\frac{\partial}{\partial t}(g^{\gamma\bar{\sigma}}R_{\alpha\bar{\beta}\gamma\bar{\sigma}})\\
&=4\Delta R_{\alpha\bar{\beta}}+4g^{\gamma\bar{\sigma}}g^{\mu\bar{\nu}}g^{\rho\bar{\tau}}(R_{\alpha\bar{\beta}\mu\bar{\tau}}R_{\gamma\bar{\sigma}\rho\bar{\nu}}+R_{\alpha\bar{\sigma}\mu\bar{\tau}}R_{\gamma\bar{\beta}\rho\bar{\nu}}-R_{\alpha\bar{\nu}\gamma\bar{\tau}}R_{\mu\bar{\beta}\rho\bar{\sigma}})\\
&\;\;\;\;-2g^{\gamma\bar{\sigma}}g^{\mu\bar{\nu}}(R_{\alpha\bar{\nu}}R_{\mu\bar{\beta}\rho\bar{\tau}}+R_{\mu\bar{\beta}}R_{\alpha\bar{\nu}\rho\bar{\tau}}+R_{\gamma\bar{\nu}}R_{\alpha\bar{\beta}\mu\bar{\sigma}}+R_{\mu\bar{\sigma}}R_{\alpha\bar{\beta}\rho\bar{\nu}})\\
&\;\;\;\;-4g^{\gamma\bar{\rho}}g^{\nu\bar{\sigma}}R_{\nu\bar{\rho}}R_{\alpha\bar{\beta}\gamma\bar{\sigma}}.
\end{aligned}
\end{equation}
Then for any $\eta\in T^{1,0}_xM$, we have
\begin{equation}\label{7.45}
\begin{aligned}
(\frac{\partial}{\partial t}R_{\alpha\bar{\beta}})\eta^{\alpha}\bar{\eta}^{\beta}&\leq 4(\Delta R_{\alpha\bar{\beta}})\eta^{\alpha}\bar{\eta}^{\beta}+C_2(n)|\eta|^2_{\omega(x,t)}|Rm(x,t)|_{\omega(x,t)}^2\\
&\leq 4(\Delta R_{\alpha\bar{\beta}})\eta^{\alpha}\bar{\eta}^{\beta}+C_2(n)C_0^2|\eta|^2_{\omega(x,t)},
\end{aligned}
\end{equation}
where $C_0$ is the constant in \eqref{e7.42}. Let
\begin{equation}
{\rm Ric}(x,\eta,t)=\frac{R_{\alpha\bar{\beta}}\eta^{\alpha}\bar{\eta}^{\beta}}{|\eta|_{\omega(x,t)}^2}.
\end{equation}
Then by \eqref{e7.39} and \eqref{e7.42},
\[{\rm Ric}(x,\eta,0)\leq -\kappa_2,\;\;\;\;|{\rm Ric}(x,\eta,t)|\leq C_3(n).\]
Define
\begin{equation}
r(x,t)=\max\{{\rm Ric}(x,\eta,t);|\eta|_{\omega(x,t)}=1\}
\end{equation}
for all $x\in M$ and $0\leq t\leq T$. Then, $r$ with \eqref{7.45} satisfy all conditions in Appendix B. Lemma \ref{lb1} (the maximum principle). It follows that
\begin{equation}
r(x,t)\leq C_4(n)C_0^2t-\kappa_2.
\end{equation}
Denote
\begin{equation}
t_0=\min\Big\{\frac{\kappa_2}{2C_4(n)C_0^2},T\Big\}.
\end{equation}
Then, for all $0<t\leq t_0$,
\begin{equation}
{\rm Ric}(x,\eta,t)\leq r(x,t)\leq -\frac{\kappa_2}{2}<0.
\end{equation}
Finally, for any $0<t\leq t_0$, the form $\omega(x,t)=\sqrt{-1}{g_{\alpha\overline{\beta}}}(x,t)dz^\alpha\wedge d\bar{z}^\beta$ satisfies \eqref{e7.40}, \eqref{e7.41} and \eqref{e7.42}.
\end{proof}

Finally, we can prove Theorem \ref{t1.1}.
\begin{proof}[Proof of Theorem \ref{t1.1}]
By Lemma \ref{l7.8} and Lemma \ref{l7.9}, we obtain the existence of such a K\"{a}hler-Einstein metric up to scaling. The uniqueness of a complete K\"{a}hler
Einstein metric of negative scalar curvature follows immediately from the Schwarz Lemma, see Proposition 5.5 of \cite{CY80}.
\end{proof}
The above theorem removes two conditions in Theorem 8.2 of \cite{CY80}: 1.$(M,\alpha)$ has positive injectivity radius; 2.the covariant derivatives of $Rm$ have bounded lengths.

\section{Gauduchon conjecture on complete complex surfaces}

Tian-Yau \cite{TY90,TY91} obtained some existence results for the solutions of Monge-Amp\`{e}re equations on noncompact K\"{a}hler manifolds. In this section, we generalize their results to Hermitian manifolds $(M,\omega)$ with $\partial\overline{\partial}\omega=0$, $\partial\overline{\partial}\omega^2=0$. If $\dim M=2$, our results can be regarded as noncompact versions of the Gauduchon conjecture on certain complete complex surfaces, see the Introduction.
\begin{theorem}\label{t8.1}
Let $(M^n,\omega)$ be a complete noncompact Hermitian manifold with $\partial\overline{\partial}\omega=0$, $\partial\overline{\partial}\omega^2=0$ and $|T|_{\omega}+|\nabla_{\omega} T|_{\omega}+|Rm|_{\omega}<\infty$. Let $f$ be a smooth function on $M$. Assume the following:

\emph{(a)} the following Sobolev inequality is true
\begin{equation}\label{Sobolev}
\Big(\int_M|\phi|^{\frac{2n}{n-1}}dV\Big)^{\frac{n-1}{n}}\leq C_1\int_M |\nabla\phi|^2dV
\end{equation}
for some $C_1>0$ and all $\phi\in C_0^\infty(M)$.

\emph{(b)}
\begin{equation}
|f|(x)\leq \frac{C_2}{1+\rho_0^{2+\epsilon}(x)};\;\;\;\sum_{k=1}^2|\nabla_{\omega}^k f|_{\omega}<\infty
\end{equation}
for some $C_2,\epsilon>0$, and all $x\in M$. Where $\rho_0$ is the distance function from a fixed $o\in M$.

\emph{(c)} There exists a constant $C_3>0$ such that
\begin{equation}
Vol(B_r(x))\leq C_3 r^{2n}
\end{equation}
for some $C_3>0$ and all $r$ where $Vol(B_r(x))$ is the volume of the geodesic ball with radius $r$ centered at some $x\in M$.

Then there is a solution $\varphi\in C^{\infty}(M)$ of the following complex Monge-Amp\`{e}re equation
\begin{equation}\label{e8.1}
\left\{ \begin{aligned}
(&\omega+\sqrt{-1}\partial\overline{\partial}\varphi)^n=e^f\omega^n,\\
&\omega+\sqrt{-1}\partial\overline{\partial}\varphi>0.
\end{aligned} \right.
\end{equation}
such that $\omega+\sqrt{-1}\partial\overline{\partial}\varphi$ is uniformly equivalent to $\omega$. In addition, if $Vol(B_r(x))>c(r)>0$ for any $x\in M$ and $0<r<1$, where $c(r)$ is independent of $x$, then $\varphi(x)$ converges uniformly to zero as $x$ goes to infinity and $\varphi(x)$ is a unique solution of \eqref{e8.1}.
\end{theorem}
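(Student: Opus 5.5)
The plan is to solve \eqref{e8.1} by the $\epsilon$-perturbation used for Theorem \ref{t7.6}, following Tian-Yau \cite{TY91}, Proposition 4.1.

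\textbf{Step 1: the perturbed problems.} For each small $\epsilon>0$ consider
\[
(\omega+\sqrt{-1}\partial\overline{\partial}\varphi_\epsilon)^n=e^{f+\epsilon\varphi_\epsilon}\omega^n .
\]
This is \eqref{e1.1} with $f(\lambda)=\log\lambda_1\cdots\lambda_n$, $\Gamma=\Gamma_n$, $\chi=\omega$ and $h=f$. Since $F(\omega^{i\bar p}\omega_{j\bar p})=0$ and $0$ is trivially a $\mathcal{C}$-subsolution, Theorem \ref{t7.6} gives a unique smooth solution $\varphi_\epsilon$ with $\omega_\epsilon:=\omega+\sqrt{-1}\partial\overline{\partial}\varphi_\epsilon$ uniformly equivalent to $\omega$. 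Theorem \ref{t4.1} only gives $\sup|\varphi_\epsilon|\le\sup|f|/\epsilon$, which is useless as $\epsilon\to0$. So the core of the proof is a uniform $C^0$ bound.

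\textbf{Step 2: uniform $C^0$ bound (main obstacle).} The approach is Moser iteration using the Sobolev inequality (a). First write
\[
(e^{\epsilon\varphi_\epsilon}e^{f}-1)\,\omega^n=\omega_\epsilon^n-\omega^n=\sqrt{-1}\partial\overline{\partial}\varphi_\epsilon\wedge\sum_{k=0}^{n-1}\omega_\epsilon^k\wedge\omega^{n-1-k}.
\]
Multiply by $\varphi_+^{p}$ (and separately by $\varphi_-^{p}$) times a cutoff, then integrate by parts. The conditions $\partial\overline{\partial}\omega=\partial\overline{\partial}\omega^2=0$, via Fino-Tomassini, give $\partial\overline{\partial}\omega^k=0$ for all $k$. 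This is exactly what kills the torsion terms: as in Guan-Li \cite{GL10}, terms like $\int\varphi^{p+1}\partial\overline{\partial}(\omega_\epsilon^k\wedge\omega^{n-1-k})$ vanish because $\omega_\epsilon=\omega+\sqrt{-1}\partial\overline{\partial}\varphi$. One is left with
\[
\int|\nabla\varphi^{(p+1)/2}|^2\omega^n\le Cp\int|\varphi|^{p}\,|e^{f}-1|\,\omega^n .
\]
Here the term $e^{\epsilon\varphi}$ has a favorable sign on the set $\{\varphi>0\}$ (respectively $\{\varphi<0\}$). The cutoff errors vanish as the cutoff radius tends to infinity, using the volume growth (c) together with the bound $|\varphi_\epsilon|\le C/\epsilon$, which holds for each fixed $\epsilon$.

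The decay $|f|\le C/(1+\rho_0^{2+\epsilon})$ combined with (c) puts $|e^f-1|$ in $L^{n/2+\delta}$ and $L^1$. With the Sobolev inequality this gives an $L^{2n/(n-2)}$-type starting bound, and Moser iteration then yields $\sup|\varphi_\epsilon|\le C$ independent of $\epsilon$. To start the iteration, I would first bound $\|\varphi\|_{L^{2n/(n-1)}}$ by taking $p=1$ and absorbing via Hölder. I expect this step to be the hardest, especially the choice of exponents needed to make the decay rate $2+\epsilon$ sufficient.

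\textbf{Step 3: limit, decay and uniqueness.} With the uniform $C^0$ bound, Theorem \ref{t4.4}, applied on the exhaustion used in the proof of Theorem \ref{t7.6} or via Lemma \ref{l7.5}, gives uniform $C^{k}$ bounds and uniform equivalence of $\omega_\epsilon$ and $\omega$, because $\epsilon\varphi_\epsilon$ is bounded in $C^2$. A diagonal subsequence then converges in $C^\infty_{loc}$ to a solution $\varphi$ of \eqref{e8.1} with $\omega+\sqrt{-1}\partial\overline{\partial}\varphi$ uniformly equivalent to $\omega$.

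Under the lower bound $Vol(B_r(x))\ge c(r)$, Fatou gives $\varphi\in L^{q}$ for some finite $q$. The local Moser estimate on unit balls, with uniform constants and a uniformly elliptic operator, then bounds $\sup_{B_{1/2}(x)}|\varphi|$ by $\|\varphi\|_{L^q(B_1(x))}$ plus the tail of $f$, so $\varphi\to0$ at infinity.

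For uniqueness, take two solutions $\varphi,\psi$ tending to $0$. Their difference satisfies a linear uniformly elliptic equation $a^{i\bar j}(\varphi-\psi)_{i\bar j}=0$, obtained from the mixed-form identity above. Applying the maximum principle on large balls, whose boundary values go to $0$, gives $\varphi=\psi$.
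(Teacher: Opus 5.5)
There is a genuine gap in your Step 2, at the claim that the cutoff errors vanish because of (c) together with $|\varphi_\epsilon|\le C/\epsilon$. Below, $\delta$ denotes your perturbation parameter and $\epsilon$ the decay rate in (b), as in the paper.

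Testing against $\eta_R^2\varphi_\delta|\varphi_\delta|^{p-1}$, with $\eta_R=\eta(\rho/R)$, produces errors of size $R^{-1}\int_{B_{2R}\setminus B_R}|\varphi_\delta|^{p+1}\omega^n$. They come from $\partial\overline{\partial}\eta_R^2\wedge\tilde{\omega}$ and from $\overline{\partial}\eta_R^2\wedge\partial\tilde{\omega}$; the latter is present because only $\partial\overline{\partial}\tilde{\omega}$ vanishes, not $\partial\tilde{\omega}$. With only a sup bound and $Vol(B_{2R})\le CR^{2n}$, the best available bound is $C\delta^{-p-1}R^{2n-1}$, which does not tend to zero. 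The same problem affects your H\"older absorption, where you divide by $\|\varphi_\delta\|_{L^q}$ and so must know it is finite.

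What is missing is an a priori integrability statement for each fixed $\delta$. This is the paper's Lemma \ref{l8.2}: $\int_M|\varphi_\delta|^p\omega^n<\infty$ for all $p\ge n$. Its proof does \emph{not} discard the perturbation term; it uses its coercivity, $(e^{\delta\varphi_\delta}-1)\varphi_\delta\ge\frac{\delta}{2}e^{-\delta\sup|\varphi_\delta|}\varphi_\delta^2$. One tests against $\eta^2(\rho/j)(1+\rho)^q\varphi_\delta|\varphi_\delta|^{p-2}$. Derivatives of the weight cost a factor $(1+\rho)^{-1}$, and the $f$-term, after Young's inequality, costs $(1+\rho)^{-2-\epsilon}$. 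This gives $\int_M(1+\rho)^q|\varphi_\delta|^p\omega^n\le C_{\delta,p,q}\bigl(\int_M(1+\rho)^{q-\min\{1,\epsilon\}}|\varphi_\delta|^p\omega^n+1\bigr)$. One then climbs from very negative $q$, where finiteness follows from boundedness and (c), up to $q=0$.

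Second, your proposed start of the iteration fails. With $p=1$, H\"older pairs $\varphi_\delta\in L^{2n/(n-1)}$ with $|e^f-1|\in L^{2n/(n+1)}$. Under (b) and (c), that is guaranteed only when $(2+\epsilon)\frac{2n}{n+1}>2n$, i.e.\ $\epsilon>n-1$. Likewise, $|e^f-1|\in L^1$ would need $2+\epsilon>2n$. Note also that (a) is the Sobolev inequality in real dimension $2n$, with gain $\frac{n}{n-1}$. So the exponents $\frac{2n}{n-2}$ and $\frac n2$ in your sketch should read $\frac{2n}{n-1}$ and $n$.

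A decay rate of $2+\epsilon$ suffices only for a \emph{large} starting exponent. The paper fixes $p_0>n$ with $\frac{p_0+1}{n+p_0}(2+\epsilon)>2$, so that $|e^f-1|\in L^{\frac{n(p_0+1)}{n+p_0}}$, the H\"older dual of $L^{\frac{(p_0+1)n}{(n-1)p_0}}$. Then \eqref{e8.22} with $p=p_0$ gives $\|\varphi_\delta\|_{L^{(p_0+1)n/(n-1)}}\le C$ independent of $\delta$; this again uses Lemma \ref{l8.2} to know the norm is finite. The subsequent iteration $p_{k+1}=(p_k+1)\frac{n}{n-1}$ needs only $\sup_M|f|<\infty$.

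Your Steps 1 and 3 otherwise agree with the paper. For the decay at infinity, the paper combines the uniform $L^{(p_0+1)n/(n-1)}$ bound with the gradient bound and the noncollapsing assumption. That is simpler than your local Moser estimate but reaches the same conclusion.
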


We shall prove this theorem in the remainder of this section. First, by Theorem \ref{t7.6}, we observe that for any $\delta>0$, the following perturbed equation is always solvable.
\begin{equation}\label{e8.2}
\left\{ \begin{aligned}
(&\omega+\sqrt{-1}\partial\overline{\partial}\varphi)^n=e^{f+\delta \varphi}\omega^n,\\
&\omega+\sqrt{-1}\partial\overline{\partial}\varphi>0.
\end{aligned} \right.
\end{equation}
Denote the unique solution of \eqref{e8.2} by $\varphi_\delta$. Then, as $\delta$ tends to zero, we want to show that $\varphi_\delta$ converges to the required solution $\varphi$ of \eqref{e8.1}. The following lemma guarantees that we can do integration by parts on $M$ for the equation \eqref{e8.2}.
\begin{lemma}\label{l8.2}
For any constants $\delta>0$, $p\geq n$, we have
\begin{equation}
\int_M |\varphi_\delta|^p \omega^n<\infty.
\end{equation}
\end{lemma}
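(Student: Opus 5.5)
The plan is to show that $\varphi_\delta$ decays at infinity at a rate matching the decay of $f$ in (b), and then combine this with the polynomial volume growth (c). Since $|f|\le C_2(1+\rho_0^{2+\epsilon})^{-1}$, the integral $\int_M |f|^{p}\omega^n$ converges whenever $p(2+\epsilon)>2n$. For $p\ge n$ this holds because $n(2+\epsilon)>2n$. This is why the threshold $p\ge n$ appears. So it suffices to prove a pointwise bound of the form
\[
|\varphi_\delta(x)|\le C_\delta\,\bigl(1+\rho_0(x)\bigr)^{-(2+\epsilon')}
\]
with some $\epsilon'>0$ chosen so that $n(2+\epsilon')>2n$; any $0<\epsilon'\le\epsilon$ works. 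Granting such a bound, $\int_M|\varphi_\delta|^p\omega^n<\infty$ follows by summing over the dyadic annuli $\{2^k\le\rho_0<2^{k+1}\}$. By (c) the annulus at scale $2^k$ has volume at most $C_3 2^{2n(k+1)}$, so the $k$-th term is at most $C2^{k(2n-p(2+\epsilon'))}$ and the series converges.

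To get the pointwise bound I will use barrier functions together with the generalized maximum principle (Lemma \ref{l2.5}), as in the proof of Theorem \ref{t4.1}. Let $\tilde\rho$ be Shi's exhaustion function from Lemma \ref{l7.2}; it applies by Lemma \ref{l2.1}. Set $w=A(1+\tilde\rho)^{-(2+\epsilon')}$. Since $|\nabla\tilde\rho|$ and $|\nabla^2\tilde\rho|$ are bounded, $|\sqrt{-1}\partial\overline\partial w|_\omega\le C A(1+\tilde\rho)^{-(3+\epsilon')}$. This is of lower order than $\delta w$ once $\tilde\rho$ is large, and on the compact region where $\tilde\rho$ is bounded we simply take $A$ large. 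Now suppose $\varphi_\delta-w$ has positive supremum. At an approximate maximizing sequence, $\sqrt{-1}\partial\overline\partial\varphi_\delta\lesssim\sqrt{-1}\partial\overline\partial w$, so the equation gives
\[
f+\delta\varphi_\delta=\log\frac{(\omega+\sqrt{-1}\partial\overline\partial\varphi_\delta)^n}{\omega^n}\le \log\frac{(\omega+\sqrt{-1}\partial\overline\partial w)^n}{\omega^n}+o(1)\le C|\sqrt{-1}\partial\overline\partial w|+o(1).
\]
Hence $\delta(\varphi_\delta-w)\le |f|+C|\partial\overline\partial w|-\delta w+o(1)$. Choosing $A\gg C_2/\delta$ and using $\epsilon'\le\epsilon$ makes the right-hand side nonpositive, a contradiction. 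The lower bound comes from the same argument applied to $\varphi_\delta+w$.

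One technical point is that $\varphi_\delta$ is bounded by Theorem \ref{t4.1}, but $w$ tends to zero. So the supremum of $\varphi_\delta-w$ is a genuine supremum of a bounded function, and the generalized maximum principle applies directly. The $C^2$ bound on $\varphi_\delta$ from Theorem \ref{t7.6} lets us linearize $\log\det$ with uniform constants. If the sequence escapes to infinity, the $o(1)$ errors and $w\to0$ still leave $\delta\sup(\varphi_\delta-w)\le\limsup|f|=0$.

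The main obstacle is the barrier computation. Because the torsion is nonzero, $\partial\overline\partial$ of a radial function must be controlled using only the Hessian bound on $\tilde\rho$, and one has to check that the constant $C_\delta\sim C_2/\delta$ absorbs everything. If the pure power barrier fails near infinity because the Hessian term is not small enough, I would instead use $w=A e^{-\mu\tilde\rho}$-type comparison combined with $(\delta-C\mu^2)w$ positivity. That gives exponential decay with a rate depending on $\delta$, which is even stronger and suffices for integrability of every power.
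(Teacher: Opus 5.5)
Your argument is correct, but it takes a genuinely different route from the paper's.

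\emph{The paper's route.} It proves the lemma by a weighted energy estimate.
\begin{itemize}
\item It multiplies the equation by $\eta^2(\rho/j)(1+\rho)^q\varphi_\delta|\varphi_\delta|^{p-2}$ and writes $(\omega+\sqrt{-1}\partial\overline{\partial}\varphi_\delta)^n-\omega^n=\sqrt{-1}\partial\overline{\partial}\varphi_\delta\wedge\tilde\omega$.
\item It integrates by parts using $\partial\overline\partial\tilde\omega=0$. This is where $\partial\overline\partial\omega=\partial\overline\partial\omega^2=0$ enters.
\item It discards the gradient term, which has a favorable sign. It then combines the coercivity $(e^{\delta\varphi_\delta}-1)\varphi_\delta\geq c_\delta|\varphi_\delta|^2$ with Young's inequality, the decay of $f$ and the volume growth (c). This gives $\int_M(1+\rho)^q|\varphi_\delta|^p\omega^n\leq C_{\delta,p,q}\bigl(\int_M(1+\rho)^{q-\epsilon}|\varphi_\delta|^p\omega^n+1\bigr)$.
\item It raises $q$ in steps of $\epsilon$, up to $q=0$, starting from a very negative value where finiteness is trivial because $\varphi_\delta$ is bounded.
\end{itemize}

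\emph{Your route.} You prove the pointwise decay $|\varphi_\delta|\leq C_\delta(1+\rho_0)^{-(2+\epsilon)}$ with a power barrier and the generalized maximum principle (Lemma \ref{l2.5}). You use (c) only to sum over annuli.

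\emph{What each buys.} Your route gives strictly more information. It uses neither $\partial\overline\partial\omega^k=0$ nor any integration by parts, so there are no cut-off terms to control. The paper's route needs no barrier. It also sets up exactly the integration-by-parts machinery reused right afterwards in the Moser iteration \eqref{e8.21}, where $\partial\overline\partial\omega^k=0$ is genuinely needed. Both give $\delta$-dependent constants, which is all the lemma is used for.

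\emph{Details to state more carefully.}
\begin{itemize}
\item \emph{Role of $A$.} Both $|\sqrt{-1}\partial\overline\partial w|_\omega$ and $\delta w$ are linear in $A$, so enlarging $A$ does not improve the differential inequality on the compact part. Its role is to make $\varphi_\delta-w\leq 0$ on $\{\tilde\rho\leq R_0\}$: take $A\geq \sup_M|\varphi_\delta|\,(1+R_0)^{2+\epsilon'}$ as well as $A\gtrsim C_2/\delta$.
\item \emph{Where the sequence lands.} With this choice, any approximate maximizing sequence for a positive supremum is forced into $\{\tilde\rho>R_0\}$. Choose $R_0=R_0(\delta)$ so that $C|\sqrt{-1}\partial\overline\partial w|_\omega\leq\frac{\delta}{4}w$ there. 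Also $|f|\leq\frac{\delta}{2}w$ there, since $\tilde\rho\leq C+\rho_0$.
\item \emph{Lower bound.} Monotonicity of the determinant needs $\omega-\sqrt{-1}\partial\overline\partial w-o(1)\omega$ to be positive. Either enlarge $R_0$ until $|\sqrt{-1}\partial\overline\partial w|_\omega\leq\frac14$ there, or linearize by concavity. In the latter case, with $\omega_\delta=\omega+\sqrt{-1}\partial\overline\partial\varphi_\delta$, use $\log\frac{\omega_\delta^n}{\omega^n}\geq\mathrm{tr}_{\omega_\delta}(\sqrt{-1}\partial\overline\partial\varphi_\delta)$ together with the uniform equivalence of $\omega_\delta$ and $\omega$, as you suggest.
\item \emph{Exponential fallback.} It would not work: an exponentially decaying barrier cannot dominate $|f|/\delta$ when $f$ decays only polynomially. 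It is not needed, because the power barrier works.
\end{itemize}
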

\begin{proof}
Let $\eta$ be a cut-off function defined on $\mathbb{R}$ such that $\eta(t)\equiv 1$ for $t\leq 1$; $\eta(t)\equiv 0$ for $t\geq 2$ and $|\eta'(t)|,|\eta''(t)|\leq 2$ for all $t$.

Let $\rho(x)$ be the distance function from some fixed point $x_0$ in $M$ with respect to the metric $g$, multiplying $\eta^2\Big(\frac{\rho}{j}\Big)(1+\rho)^q\varphi_\delta |\varphi_\delta|^{p-2}$ to \eqref{e8.2} and then integrating, we obtain
\begin{equation}
\begin{aligned}
&\int_M \eta^2(1+\rho)^q\varphi_\delta |\varphi_\delta|^{p-2}((\omega+\sqrt{-1}\partial\overline{\partial}\varphi_\delta)^n-\omega^n)\\
=&\int_M (e^{f+\delta \varphi_\delta}-1)\eta^2(1+\rho)^q\varphi_\delta |\varphi_\delta|^{p-2}\omega^n.
\end{aligned}
\end{equation}
We write
\begin{equation}
\tilde{\omega}=\sum_{k=1}^n \omega^{k-1}\wedge (\omega+\sqrt{-1}\partial\overline{\partial}\varphi_\delta)^{n-k}.
\end{equation}
Note that $\partial\overline{\partial}\tilde{\omega}=0$ and
\begin{equation}
(\omega+\sqrt{-1}\partial\overline{\partial}\varphi_\delta)^n-\omega^n=\sqrt{-1}\partial\overline{\partial}\varphi_\delta\wedge\tilde{\omega},
\end{equation}
we derive by integration by parts:
\begin{equation}\label{e8.7}
\begin{aligned}
&\int_M (e^{f+\delta \varphi_\delta}-1)\eta^2(1+\rho)^q\varphi_\delta |\varphi_\delta|^{p-2}\omega^n\\
=&\sqrt{-1}\int_M \eta^2(1+\rho)^q\varphi_\delta |\varphi_\delta|^{p-2}\partial\overline{\partial}\varphi_\delta\wedge\tilde{\omega}\\
=&\sqrt{-1}\int_M \frac{1}{p}\eta^2(1+\rho)^q\overline{\partial}|\varphi_\delta|^{p}\wedge \partial \tilde{\omega}-\partial(\eta^2(1+\rho)^q\varphi_\delta |\varphi_\delta|^{p-2})\wedge \overline{\partial}\varphi_\delta\wedge \tilde{\omega}\\
=&\sqrt{-1}\int_M-\frac{1}{p}|\varphi_\delta|^{p}\overline{\partial}(\eta^2(1+\rho)^q)\wedge \partial \tilde{\omega}-\frac{1}{p}\partial(\eta^2(1+\rho)^q)\wedge \overline{\partial}|\varphi_\delta|^{p}\wedge \tilde{\omega}\\
&\;\;\;\;\;\;\;\;\;\;\;\;\;-(p-1)\eta^2(1+\rho)^q|\varphi_\delta|^{p-2}\partial \varphi_\delta\wedge\overline{\partial} \varphi_\delta \wedge \tilde{\omega}\\
=&\sqrt{-1}\int_M-\frac{2}{p}|\varphi_\delta|^{p}\overline{\partial}(\eta^2(1+\rho)^q)\wedge \partial \tilde{\omega}+\frac{1}{p}|\varphi_\delta|^{p}\partial\overline{\partial}(\eta^2(1+\rho)^q)\wedge \tilde{\omega}\\
&\;\;\;\;\;\;\;\;\;\;\;\;\;-(p-1)\eta^2(1+\rho)^q|\varphi_\delta|^{p-2}\partial \varphi_\delta\wedge\overline{\partial} \varphi_\delta \wedge \tilde{\omega}.
\end{aligned}
\end{equation}
Next, we estimate each term on the right-hand side of equation \eqref{e8.7}. By Theorem \ref{t7.6}, the solution $\varphi_\delta$ and its derivatives up to order 2 are bounded, the metric $\omega+\sqrt{-1}\partial\overline{\partial}\varphi_\delta$ is uniformly equivalent to $\omega$ on $M$. The bound and equivalence may depend on $\delta$. In the following $C_{\delta,p,q}$ will denotes a positive constant depending on $\delta,p,q$. By straightforward calculations, we get
\begin{equation}\label{e8.8}
\begin{aligned}
&\sqrt{-1}\int_M-\frac{2}{p}|\varphi_\delta|^{p}\overline{\partial}(\eta^2(1+\rho)^q)\wedge\partial \tilde{\omega}\\
=&\sqrt{-1}\int_M-\frac{2}{p}|\varphi_\delta|^{p}q\eta^2(1+\rho)^{q-1}\overline{\partial}\rho\wedge\partial \tilde{\omega}+2(1+\rho)^q\frac{\eta\eta'}{j}\overline{\partial}\rho\wedge\partial \tilde{\omega}\\
\leq &C_{\delta,p,q}\int_M(1+\rho)^{q-1}|\varphi_\delta|^{p}\omega^n,
\end{aligned}
\end{equation}
here we use the fact that $\partial \tilde{\omega}$ depends only on $\omega$ and the derivatives of $\varphi_\delta$ up to order 2. Similarly, we obtain
\begin{equation}\label{e8.9}
\sqrt{-1}\int_M \frac{1}{p}|\varphi_\delta|^{p}\partial\overline{\partial}(\eta^2(1+\rho)^q)\wedge \tilde{\omega}\leq C_{\delta,p,q}\int_M(1+\rho)^{q-1}|\varphi_\delta|^{p}\omega^n,
\end{equation}
\begin{equation}\label{e8.10}
\partial \varphi_\delta\wedge\overline{\partial} \varphi_\delta \wedge \tilde{\omega}\geq C_{\delta,p,q}|\partial \varphi_\delta|_\omega^2 \omega^n\geq 0.
\end{equation}
By \eqref{e8.7}-\eqref{e8.10}, we have
\begin{equation}\label{e8.11}
\int_M (e^{f+\delta \varphi_\delta}-1)\eta^2(1+\rho)^q\varphi_\delta |\varphi_\delta|^{p-2}\omega^n\leq C_{\delta,p,q}\int_M(1+\rho)^{q-1}|\varphi_\delta|^{p}\omega^n.
\end{equation}

On the other hand, one can easily verify that
\begin{equation}
(e^{\delta \varphi_\delta}-1)\varphi_\delta\geq \frac{\delta}{2}e^{-\delta\sup_M|\varphi_\delta|}|\varphi_\delta|^2\;\;\;\;{\rm on} \;M;
\end{equation}
therefore,
\begin{equation}\label{e8.13}
\begin{aligned}
&\int_M (e^{f+\delta \varphi_\delta}-1)\eta^2(1+\rho)^q\varphi_\delta |\varphi_\delta|^{p-2}\omega^n\\
\geq &\frac{\delta}{2}e^{-\delta\sup_M|\varphi_\delta|+\inf_M f}\int_M \eta^2(1+\rho)^q|\varphi_\delta|^{p}\omega^n
-\int_M |e^f-1|\eta^2(1+\rho)^q|\varphi_\delta|^{p-1}\omega^n.
\end{aligned}
\end{equation}
Since $f=O(\rho^{-2-\varepsilon})$ and $p\geq n$, by Young's inequality, we have
\begin{equation}\label{e8.14}
\begin{aligned}
\int_M |e^f-1|\eta^2(1+\rho)^q|\varphi_\delta|^{p-1}\omega^n\leq C_{\delta,p,q}\int_M \eta^2(1+\rho)^{q-2-\varepsilon}|\varphi_\delta|^{p-1}\omega^n\\
\leq C_{\delta,p,q}\int_M \eta^2(1+\rho)^{q-\varepsilon}\Big(\frac{p-1}{p}|\varphi_\delta|^{p}+\frac{1}{p}(1+\rho)^{-2p}\Big)\omega^n\\
\leq C_{\delta,p,q}\Big(\int_M \eta^2(1+\rho)^{q-\varepsilon}|\varphi_\delta|^{p}\omega^n+\int_M \eta^2(1+\rho)^{q-\varepsilon-2p}\omega^n\Big).
\end{aligned}
\end{equation}
Note that $Vol(B_\rho(x))\leq C\rho^{2n}$, for $p\geq n, q\leq 0$, we have
\begin{equation}\label{e8.15}
\begin{aligned}
\int_M \eta^2(1+\rho)^{q-\varepsilon-2p}\omega^n&\leq C\int_0^\infty (1+\rho)^{-2n-\varepsilon}dVol(B_\rho(x))\\
&=C(1+\rho)^{-2n-\varepsilon}Vol(B_\rho(x))|_0^\infty-C\int_0^\infty Vol(B_\rho(x))d(1+\rho)^{-2n-\varepsilon}\\
&< \infty.
\end{aligned}
\end{equation}
Here and below, we denote a positive constant that is independent of $\delta,p,q$ by $C$, which may change from line to line. By \eqref{e8.11}, \eqref{e8.13}, \eqref{e8.14} and \eqref{e8.15}, we have
\begin{equation}
\int_M \eta^2(1+\rho)^q|\varphi_\delta|^{p}\omega^n\leq C_{\delta,p,q}\Big(\int_M (1+\rho)^{q-\varepsilon}|\varphi_\delta|^{p}\omega^n+1\Big),
\end{equation}
let $j\rightarrow \infty$, we get
\begin{equation}\label{e8.17}
\int_M (1+\rho)^q|\varphi_\delta|^{p}\omega^n\leq C_{\delta,p,q}\Big(\int_M (1+\rho)^{q-\varepsilon}|\varphi_\delta|^{p}\omega^n+1\Big).
\end{equation}
Since $\varphi_\delta$ is bounded, the integral $\int_M (1+\rho)^q|\varphi_\delta|^{p}\omega^n$ will be finite if $q$ is sufficiently negative(see \eqref{e8.15}). Thus by using \eqref{e8.17} inductively, our lemma can be proved.
\end{proof}

Choose a $p_0>n$ such that
\begin{equation}
\frac{p_0+1}{n+p_0}(2+\varepsilon)>2.
\end{equation}
Then by $Vol(B_\rho(x))\leq C\rho^{2n}$ and $f=O(\rho^{-2-\varepsilon})$, we obtain
\begin{equation}\label{e8.19}
\Big(\int_M |e^f-1|^{\frac{n(p_0+1)}{n+p_0}}\omega^n\Big)^{\frac{n+p_0}{n(p_0+1)}}<\infty.
\end{equation}
Rewrite \eqref{e8.2} as
\begin{equation}\label{e8.20}
-\sqrt{-1}\partial\overline{\partial}\varphi_\delta\wedge\tilde{\omega}=(1-e^{f+\delta \varphi_\delta})\omega^n.
\end{equation}
Let $\eta$ be the cut-off function as in the proof of Lemma \ref{l8.2}. Multiplying $\eta^2\Big(\frac{\rho(x)}{R}\Big)\varphi_\delta |\varphi_\delta|^{p-1}$ with $p\geq p_0$ to both sides of \eqref{e8.20} and integrating by parts(similar to the proof of Lemma \ref{l8.2}), we have
\begin{equation}\label{e8.21}
\begin{aligned}
\int_M\Big|\nabla\Big(\eta\Big(\frac{\rho}{R}\Big)\varphi_\delta^{\frac{p+1}{2}}\Big)\Big|^2\omega^n\leq n\int_M\partial\Big(\eta\Big(\frac{\rho}{R}\Big)\varphi_\delta^{\frac{p+1}{2}}\Big)\wedge\overline{\partial}\Big(\eta\Big(\frac{\rho}{R}\Big)\varphi_\delta^{\frac{p+1}{2}}\Big)\wedge\tilde{\omega}\\
\leq np\Big(\int_M \eta^2 |\varphi_\delta|^{p}|e^f-1|\omega^n+\int_M \eta^2 |\varphi_\delta|^{p-1}\varphi_\delta (1-e^{\delta \varphi_\delta})e^f \omega^n\Big)\\
+C_{p,\delta}\Big(\frac{1}{R^2}\int_M |\varphi_\delta|^{p+1} \omega^n+\frac{1}{R}\int_M |\varphi_\delta|^{p} \omega^n\Big)
\end{aligned}
\end{equation}
Here we use the fact that $\tilde{\omega}\geq \omega^{n-1}$. By Lemma \ref{l8.2}, the last term in the above inequality tends to zero as $R\rightarrow\infty$. Note that $\varphi_\delta (1-e^{\delta \varphi_\delta})\leq 0$, applying Sobolev inequality \eqref{Sobolev} to the left-hand side of \eqref{e8.21} and letting $R\rightarrow\infty$, we obtain
\begin{equation}\label{e8.22}
\Big(\int_M |\varphi_\delta|^{(p+1)\frac{n}{n-1}}\omega^n\Big)^{\frac{n}{n-1}}\leq Cp\int_M |\varphi_\delta|^{p}|e^f-1|\omega^n.
\end{equation}
Let $p=p_0$, applying H$\ddot{\rm o}$lder inequality to the right-hand side of \eqref{e8.22}, then by \eqref{e8.19} and \eqref{e8.22}, we have
\begin{equation}\label{e8.23}
\int_M |\varphi_\delta|^{(p_0+1)\frac{n}{n-1}}\omega^n\leq C.
\end{equation}
Let $p_{k+1}=(p_k+1)\frac{n}{n-1}$ for $k\in \mathbb{N}$. Noting that $f$ is bounded, and using inequality \eqref{e8.22}, we obtain
\begin{equation}\label{e8.24}
||\varphi_\delta||_{L^{p_{k+1}}(M)}+1\leq (Cp)^{\frac{1}{p}}(||\varphi_\delta||_{L^{p_{k}}(M)}+1).
\end{equation}
Letting $k$ go to infinity, and using \eqref{e8.23}, the $L^\infty$ estimates for $\varphi_\delta$ follow from an iteration of the inequality \eqref{e8.24},
\begin{equation}\label{e8.25}
\sup_M|\varphi_\delta|\leq C,
\end{equation}
where the constant $C$ in \eqref{e8.25} is independent of $\delta$.

\begin{proof}[Proof of Theorem \ref{t8.1}]
Let $\{\delta_m\}$ be a sequence such that $\delta_m\in (0,1)$ and $\delta_m\rightarrow 0$, we obtain a sequence of the following equations,
\begin{equation}\label{e8.29}
\left\{ \begin{aligned}
(&\omega+\sqrt{-1}\partial\overline{\partial}\varphi)^n=e^{f+\delta_m \varphi}\omega^n,\\
&\omega+\sqrt{-1}\partial\overline{\partial}\varphi>0.
\end{aligned} \right.
\end{equation}
By \eqref{e8.25}, Theorem \ref{t4.4} and Theorem \ref{t7.6}, for each $m$, the equation \eqref{e8.29} exists a solution $\varphi_m\in C^{\infty}(M)$ and
\[|\varphi_m|+|\partial \varphi_m|_\omega+|\partial\overline{\partial}\varphi_m|_\omega<c,\]
where $c$ is a constant independent of $m$. The rest of the proof is similar to that of Theorem \ref{t7.6}, one can approximate the solution $\varphi$ of the original equation via the sequence $\{\varphi_m\}$.

In the remainder of the proof, we assume that $Vol(B_r(x))>c(r)>0$ for any $x\in M$ and $0<r<1$. By \eqref{e8.23} we have
\[\lim_{x\rightarrow\infty}\int_{B_r(x)} |\varphi|^{(p_0+1)\frac{n}{n-1}}\omega^n=0\]
for any $r>0$, note that $|\partial \varphi|_\omega<\infty$, we conclude that $\varphi(x)$ converges uniformly to zero as $x$ goes to infinity. The uniqueness of such a $\varphi$ follows directly from maximum principle.
\end{proof}

\begin{proof}[Proof of Theorem \ref{t1.4}]
The theorem follows immediately by applying the operator $-\sqrt{-1}\partial\overline{\partial}\log$ to \eqref{e8.1}.
\end{proof}

Tian-Yau solved \eqref{t8.1} on certain K\"{a}hler manifolds that satisfy the conditions in Theorem \ref{t1.4} (see Proposition 4.1 of \cite{TY91}), we generalize their results to Hermitian manifolds $(M,\omega)$ with $\partial\overline{\partial}\omega=0$, $\partial\overline{\partial}\omega^2=0$. Next, we give several examples of manifolds that satisfy the conditions in Theorem \ref{t1.4}.
\begin{example}\label{ex5.3} (Tian-Yau \cite{TY91})
Let $M=\bar{M}\setminus D$, where $\bar{M}$ is a K\"{a}hler manifold of complex dimension $n\geq 2$ and $D$ is a neat, almost ample, admissible divisor in $\bar{M}$.
\end{example}
The manifold $M$ as in the above example admits a complete K\"{a}hler metric such that $|Rm|<\infty$, and satisfies conditions (a) and (c) in Theorem \ref{t1.4}, see \cite{TY91} for details. There are also examples where $M$ is not necessarily a K\"{a}hler manifold:
\begin{example}\label{ex5.4}
Let $(M^n,\omega)$ be a complete noncompact Hermitian manifold with $\partial\overline{\partial}\omega=0$, $\partial\overline{\partial}\omega^2=0$, ${\rm Ric}\geq 0$, $|T|_{\omega}+|\nabla_{\omega} T|_{\omega}+|Rm|_{\omega}<\infty$ and Euclidean volume growth.
\end{example}

By Theorem 1.2 of Balogh-Krist\'aly \cite{BK23}, the manifold $M$ as in the above example satisfies the Sobolev inequality \eqref{Sobolev}.

Next, we generalize Theorem 1.1 of Tian-Yau \cite{TY90} to Hermitian manifolds $(M,\omega)$ with $\partial\overline{\partial}\omega=0$,  $\partial\overline{\partial}\omega^2=0$.
\begin{theorem}\label{t8.5}
Let $(M^n,\omega)$ be a complete noncompact Hermitian manifold with $\partial\overline{\partial}\omega=0$, $\partial\overline{\partial}\omega^2=0$ and $|T|_{\omega}+|\nabla_{\omega} T|_{\omega}+|Rm|_{\omega}<\infty$. Assume that $Vol(B_r(x_0))\leq C_2 r^{2}$ for all $r>0$, and $Vol(B_1(x))\geq C_2^{-1}(1+\rho(x_0,x))^{-\beta}$ for some fixed point $x_0$ in $M$ and some positive constants $C_2,\beta$ independent of $x$, where $\rho$ is the distance function on $M$, $B_r(x_0)$ is the geodesic ball with radius $r$ centered at $x_0\in M$. Let $f$ be a smooth function on $M$ satisfying:
\begin{equation}
\int_M(e^f-1)\omega^n=0
\end{equation}
\begin{equation}
|f|(x)\leq \frac{C_2}{(1+\rho(x_0,x))^{4+2\beta}};\;\;\;\sum_{k=1}^2|\nabla_{\omega}^k f|_{\omega}<\infty,
\end{equation}
for some constant $C_2$. Then equation \eqref{e8.1} has a smooth solution $\varphi$ such that $\omega+\sqrt{-1}\partial\overline{\partial}\varphi$ is uniformly equivalent to $\omega$.
\end{theorem}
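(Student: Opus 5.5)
We follow the scheme of Theorem \ref{t8.1}. By Theorem \ref{t7.6}, for each $\delta>0$ the perturbed equation \eqref{e8.2} has a unique smooth solution $\varphi_\delta$ with $\omega+\sqrt{-1}\partial\overline{\partial}\varphi_\delta$ uniformly equivalent to $\omega$. The task is a $C^0$ bound on $\varphi_\delta$ independent of $\delta$. Once it is available, Theorem \ref{t4.4}, applied on the bounded-geometry exhaustions of Lemma \ref{l7.3} and Lemma \ref{l7.5}, gives uniform $C^{2}$ and higher interior estimates. A diagonal subsequence as $\delta\to 0$ then converges to a solution $\varphi$ of \eqref{e8.1} with $\omega+\sqrt{-1}\partial\overline{\partial}\varphi$ uniformly equivalent to $\omega$, exactly as in the proof of Theorem \ref{t8.1}.

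Since the Sobolev inequality is no longer assumed, the $C^0$ estimate must replace the Moser iteration with an argument in the style of Tian--Yau \cite{TY90}. Two ingredients are needed.

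\emph{First, the normalization.} Integrating \eqref{e8.2} against $\eta^2(\rho/R)$ and using $(\omega+\sqrt{-1}\partial\overline{\partial}\varphi_\delta)^n-\omega^n=\sqrt{-1}\partial\overline{\partial}\varphi_\delta\wedge\tilde{\omega}$ with $\partial\overline{\partial}\tilde{\omega}=0$, the left side integrates by parts to boundary terms of size $O(R^{-2}\,Vol(B_{2R}))$. These are bounded because $Vol(B_r(x_0))\le C_2r^2$, and they can be made to vanish in the limit with a logarithmic cutoff, which is standard in quadratic volume growth. This yields $\int_M(e^{f+\delta\varphi_\delta}-1)\omega^n=0$. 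Combined with $\int_M(e^f-1)\omega^n=0$, we get $\int_M e^f(e^{\delta\varphi_\delta}-1)\omega^n=0$, so $\varphi_\delta$ changes sign, or vanishes, in an integrated sense.

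\emph{Second, a weighted energy estimate.} We multiply \eqref{e8.20} by $\varphi_\delta$ times a cutoff and integrate by parts as in Lemma \ref{l8.2}, using $\tilde{\omega}\ge\omega^{n-1}$ and $\varphi_\delta(1-e^{\delta\varphi_\delta})\le 0$. This gives
\[
\int_M|\nabla\varphi_\delta|^2\omega^n\le C\int_M|e^f-1|\,|\varphi_\delta|\,\omega^n .
\]
To control the right side we need a growth bound on $\varphi_\delta$. Using the gradient bound from the $C^2$ estimate (which is independent of $\delta$ once $\sup|\varphi_\delta|$ is, so we argue by contradiction or by a continuity argument on $\sup|\varphi_\delta|$), one has $|\varphi_\delta(x)|\le|\varphi_\delta(x_0)|+C\rho(x)$. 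The decay $|f|\le C(1+\rho)^{-4-2\beta}$ together with quadratic volume growth makes $\int_M|e^f-1|(1+\rho)\,\omega^n$ finite.

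The main obstacle is turning the $L^2$-gradient bound into a pointwise bound without a Sobolev inequality. We would follow Tian--Yau: on each unit ball $B_1(x)$, use a local Moser iteration, valid under bounded geometry, to bound $\sup_{B_{1/2}(x)}|\varphi_\delta-\bar\varphi_x|$ in terms of the local $L^2$ oscillation, with constants involving $Vol(B_1(x))^{-1}\le C(1+\rho)^{\beta}$. Next, use a Poincar\'e-type inequality along chains of balls from $x_0$ to $x$, together with the weight $(1+\rho)^{4+2\beta}$ in the decay of $f$, to show that the oscillation of $\varphi_\delta$ is bounded uniformly; the exponent $4+2\beta$ is precisely what should absorb two powers of $\rho$ from the volume lower bound and the length of the chain. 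The normalization from the first step then fixes the additive constant, giving $\sup_M|\varphi_\delta|\le C$ independent of $\delta$. If the chain argument fails, a fallback is a Green's function construction for $\Delta_\omega$ in the parabolic (quadratic-growth) setting, writing $\varphi_\delta$ via the Green's representation of \eqref{e8.20}. Either way, this $C^0$ step is where the real work lies; the rest is the compactness argument already carried out for Theorem \ref{t8.1}.
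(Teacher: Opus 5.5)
Your outer scheme is the paper's: solve \eqref{e8.2} by Theorem \ref{t7.6}, prove a $\delta$-independent bound on $\sup_M|\varphi_\delta|$, then pass to the limit using Theorem \ref{t4.4}. The paper takes the $\delta$-uniform $L^\infty$ estimate from Tian--Yau's Theorem 1.1 in \cite{TY90} and only changes how the integrations by parts are justified. Your own replacement for that estimate does not give a uniform bound, for two reasons.

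First, it is circular. The gradient bound behind $|\varphi_\delta(x)|\le|\varphi_\delta(x_0)|+C\rho(x)$ comes from Theorem \ref{t4.4}, whose constants depend on $\sup_M|\varphi_\delta|$. A priori you only have $\sup_M|\varphi_\delta|\le\sup_M|f|/\delta$ from Theorem \ref{t4.1}. Likewise, a local Moser iteration for the operator $\sqrt{-1}\partial\overline{\partial}(\cdot)\wedge\tilde{\omega}$ needs an upper bound on $\tilde{\omega}$, i.e.\ the $C^2$ estimate. Nothing in your chain of inequalities turns an assumed bound $\sup_M|\varphi_\delta|\le A$ into one independent of $A$, so ``contradiction or continuity'' has no mechanism behind it.

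Second, even granting a uniform bound on $\|\nabla\varphi_\delta\|_{L^2}$, the chain argument only gives growth. Chaining local Poincar\'e inequalities along about $\rho(x)$ unit balls of volume at least $C_2^{-1}(1+\rho)^{-\beta}$ gives, by Cauchy--Schwarz, only $|\bar{\varphi}_x-\bar{\varphi}_{x_0}|\le C\big(\rho(x)(1+\rho(x))^{\beta}\big)^{1/2}\|\nabla\varphi_\delta\|_{L^2}$. Under quadratic volume growth, finite Dirichlet energy cannot control oscillation at all (compare $\log\log\rho$ on a planar end). So boundedness must come from the nonlinear equation together with $\int_M(e^f-1)\omega^n=0$, which your sketch does not supply. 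Your sign-change normalization only helps once the oscillation is bounded. The Green's function fallback is not available either: $Vol(B_r(x_0))\le C_2r^2$ makes $M$ parabolic, so there is no positive Green's function, and the operator in \eqref{e8.20} has $\varphi_\delta$-dependent coefficients.

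There is also a concrete error in your integration by parts. Since $\tilde{\omega}$ is only $\partial\overline{\partial}$-closed, one has
\[
\int_M\eta^2\sqrt{-1}\partial\overline{\partial}\varphi_\delta\wedge\tilde{\omega}=\int_M\varphi_\delta\sqrt{-1}\partial\overline{\partial}(\eta^2)\wedge\tilde{\omega}+\sqrt{-1}\int_M\varphi_\delta\big(\partial(\eta^2)\wedge\overline{\partial}\tilde{\omega}-\overline{\partial}(\eta^2)\wedge\partial\tilde{\omega}\big).
\]
The torsion terms are of size $R^{-1}\int_{B_{2R}\setminus B_R}|\varphi_\delta|\,\omega^n$. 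For a merely bounded $\varphi_\delta$ with $Vol(B_r)\le C_2r^2$, this can only be estimated by $CR^{-1}Vol(B_{2R})=O(R)$, and a logarithmic cutoff does not help. Your $O(R^{-2}Vol(B_{2R}))$ bound is the K\"ahler one, so $\int_M(e^{f+\delta\varphi_\delta}-1)\omega^n=0$ does not follow as written. This is exactly the modification the paper points to. For each fixed $\delta$, one first proves integrability of $\varphi_\delta$ by the weighted $(1+\rho)^q$ induction of Lemma \ref{l8.2}, using the decay of $f$. After that, these terms tend to zero as $R\to\infty$, and both the normalization and the energy inequality are justified.
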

\begin{proof}
The proof is similar to that of Theorem 1.1 in \cite{TY90}. We briefly note some of the differences:

$\bullet$ In \cite{TY90}, the authors use the second order estimates and existence results for perturbed equations such as \eqref{e8.2}(see Lemma 3.6 and Lemma 3.2 of \cite{TY90}). In our case, we instead use Theorem \ref{t7.6}.

$\bullet$ In \cite{TY90}, when the authors perform $L^\infty$ estimates on the perturbation equation, integration by parts requires the use of $d\omega=0$. In our case, we instead use $\partial\overline{\partial}\omega^k=0$ for $k=1,\ldots,n$, the calculation is slightly more complicated, but one can refer to the proof of Theorem \ref{t8.1}.
\end{proof}
\begin{proof}[Proof of Theorem \ref{t1.5}]
The theorem follows immediately by applying the operator $-\sqrt{-1}\partial\overline{\partial}\log$ to \eqref{e8.1}.
\end{proof}

We give examples of manifolds that satisfy the conditions in Theorem \ref{t1.5}.
\begin{example}\label{ex5.6}
Let $M=\bar{M}\times \mathbb{C}$, where $(\bar{M},\omega)$ is a compact Hermitian manifold such that $\partial\overline{\partial}\omega=0$, $\partial\overline{\partial}\omega^2=0$. Define a K\"{a}hler form $\hat{\omega}$ on $M$ by $\hat{\omega}(u,v):=\omega(d\pi_1 u,d\pi_1 v)+\omega_e(d\pi_2 u,d\pi_2 v)$, where $\omega_e$ is the K\"{a}hler form with respect to the standard Euclidean metric on $\mathbb{C}$, $\pi_1$ and $\pi_2$ are projection maps from $M$ to $\bar{M}$ and $\mathbb{C}$, respectively. Then one can verify that $(M,\hat{\omega})$ satisfies the conditions in Theorem \ref{t8.5}. In particular, when $\dim_{\mathbb{C}} \bar{M}=1$, $\bar{M}$ is a K\"{a}hler manifold; when $\dim_{\mathbb{C}} \bar{M}=2$, by Gauduchon Theorem \cite{Gau77}, $\bar{M}$ admits a Gauduchon metric. Hence $\bar{M}$ always admits a K\"{a}hler form $\omega$ such that $\partial\overline{\partial}\omega=0$ and
$\partial\overline{\partial}\omega^2=0$ when $\dim_{\mathbb{C}} \bar{M}=1,2$.
\end{example}

\section{Hesse-Einstein metrics}

In this section, we solve a class of fully non-linear equations and construct the Hesse-Einstein metrics on certain noncompact affine manifolds. The key technology of the section is to lift the equations on an affine manifold to it's tangent bundle (which can be viewed as a Hermitian manifold), and use the a priori estimates for \eqref{e1.1} to derive the a priori estimates for some equations on affine manifolds.

Some basic results and preliminaries about affine manifolds and Hessian manifolds can be found in Appendix A. In this section, $D$ denotes the flat connection. Now we introduce the relationship between affine structures and complex structures.
Let $(M,D)$ be a flat manifold and $TM$ be the tangent bundle of $M$ with projection $\pi:TM\rightarrow M$. For an affine coordinate system $\{x^1,\ldots,x^n\}$ on $M$, we define
\begin{equation}\label{e99.2}
z^j=\xi^j+\sqrt{-1}\xi^{n+j}
\end{equation}
where $\xi^i=x^i\circ \pi$ and $\xi^{n+i}=dx^i$. Then $n$-tuples of functions given by $\{z^1,\ldots,z^n\}$ yield holomorphic coordinate systems on $TM$. For any smooth function $u$ on $M$, for any point $p\in TM$, we have
\begin{equation}
\frac{\partial (u\circ\pi)}{\partial z_i}(p)=\frac{\partial (u\circ\pi)}{\partial \bar{z_i}}(p)=\frac{1}{2}\frac{\partial u}{\partial x_i}(\pi(p)).
\end{equation}

We denote by $J_D$ the complex structure tensor of the complex manifold $TM$. For a Riemannian metric $g$ on $M$ we put
\begin{equation}
g^T=\sum_{i,j=1}^{n}(g_{ij}\circ \pi)dz^id\overline{z}^j.
\end{equation}
Then $g^T$ is a Hermitian metric on the complex manifold $(TM,J_D)$, and for any $(0,2)$-tensor $\chi$, $\chi^T$ can be defined similarly. Let $\beta (g)$ is a form associated to the Riemannian metric $g$, which in local affine coordinates is given by
\[
{{\beta }_{ij}}(g)=-{{\partial }_{i}}{{\partial }_{j}}\log \det [g(t)]=-2\kappa_{ij}.
\]
If $\beta(g)=\lambda g$ for some constant $\lambda$, then $g$ is called the Hesse-Einstein metric, which is also called Einstein K\"{a}hler affine metric by Cheng-Yau \cite{CY82}.

The following two properties describe the relationship between Hessian manifolds and K{\"a}hler manifolds.
\begin{proposition}\label{p9.1} \emph{(\cite{r11})}
Let $(M,D)$ be a flat manifold and $g$ a Riemannian metric on $M$. Then the following conditions are equivalent.

\noindent\emph{(1)} $g$ is a Hessian metric on $(M,D)$.

\noindent\emph{(2)} $g^T$ is a K{\"a}hler metric on $(TM,J_D)$.

\end{proposition}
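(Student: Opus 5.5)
The plan is a direct local computation in the holomorphic coordinates $z^j=\xi^j+\sqrt{-1}\xi^{n+j}$ on $TM$ introduced in \eqref{e99.2}. Both conditions in Proposition \ref{p9.1} are local, so it suffices to compare them on a single affine chart $\{x^1,\ldots,x^n\}$ of $M$ and the induced chart $\{z^1,\ldots,z^n\}$ of $TM$. Recall that $g$ is Hessian on $(M,D)$ when locally $g_{ij}=\partial_i\partial_j\varphi$ for some potential $\varphi$ in affine coordinates. This is equivalent to the Codazzi-type symmetry $\partial_k g_{ij}=\partial_i g_{kj}$, since $D$ is flat and its Christoffel symbols vanish in affine coordinates.

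First I will compute the derivatives of $g^T_{i\bar j}=g_{ij}\circ\pi$. The functions $g_{ij}\circ\pi$ depend only on $\xi^1,\ldots,\xi^n$. By the identity stated after \eqref{e99.2}, $\partial_{z^k}(g_{ij}\circ\pi)=\partial_{\bar z^k}(g_{ij}\circ\pi)=\tfrac12(\partial_k g_{ij})\circ\pi$. The Kähler condition $d\omega_{g^T}=0$ for $\omega_{g^T}=\sqrt{-1}\,g^T_{i\bar j}dz^i\wedge d\bar z^j$ is equivalent to $\partial_{z^k}g^T_{i\bar j}=\partial_{z^i}g^T_{k\bar j}$, together with its conjugate $\partial_{\bar z^k}g^T_{i\bar j}=\partial_{\bar z^j}g^T_{i\bar k}$. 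By the formula above, and because $g_{ij}$ is symmetric, both conditions reduce to $\partial_k g_{ij}=\partial_i g_{kj}$ on $M$. This is exactly the Hessian condition.

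For (1)$\Rightarrow$(2), I can alternatively write $g_{ij}=\partial_i\partial_j\varphi$ and observe that $\sqrt{-1}\partial\bar\partial(4\varphi\circ\pi)$ has components $\partial_i\partial_j\varphi\circ\pi$. This exhibits $g^T$ as locally $\partial\bar\partial$-exact, hence Kähler. For (2)$\Rightarrow$(1), the symmetry $\partial_k g_{ij}=\partial_i g_{kj}$ obtained above says that for each $j$ the $1$-form $g_{ij}dx^i$ is closed. By the Poincaré lemma on a convex affine chart, $g_{ij}=\partial_i\varphi_j$. Symmetry of $g$ then makes $\varphi_jdx^j$ closed, so $\varphi_j=\partial_j\varphi$, and therefore $g=D d\varphi$ locally.

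There is no real obstacle here. The only care needed is bookkeeping: keeping track of the factor $\tfrac12$, and checking that positivity of $g$ gives positivity of $g^T$, which is immediate since $g^T_{i\bar j}$ is a real symmetric positive matrix. I also need to confirm that the holomorphic coordinates are compatible across affine chart changes; this holds because affine transition maps are linear in $\xi$ and act on $dx^i$ by the same linear part, so the $z^j$ transform holomorphically and the local computation globalizes.
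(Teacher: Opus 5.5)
Your proposal is correct and is essentially the standard argument. The paper gives no proof of its own and cites \cite{r11}, where one likewise differentiates the K\"ahler form $\sqrt{-1}\sum (g_{ij}\circ\pi)\,dz^i\wedge d\bar z^j$ using $\partial_{z^k}(g_{ij}\circ\pi)=\partial_{\bar z^k}(g_{ij}\circ\pi)=\tfrac12(\partial_k g_{ij})\circ\pi$, shows it is closed exactly when $\partial_k g_{ij}=\partial_i g_{kj}$, and concludes via the equivalence (1)$\Leftrightarrow$(3) of Proposition~\ref{p2.3}, which your Poincar\'e-lemma step simply reproves.
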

Furthermore, we have
\begin{proposition}\label{p9.2} \emph{(\cite{r11})}
Let $(M, D, g)$ be a Hessian manifold and $R^T$ be the Riemannian curvature tensor of the K{\"a}hler manifold $(TM,J,g^T)$. Then we have
\begin{equation}
\label{sec}
R_{i\bar{j}k\bar{l}}^T=-\frac{1}{2}Q_{ijkl}\circ \pi.
\end{equation}
Let $R_{i\bar{j}}^T$ be the Ricci tensor of the K{\"a}hler manifold $(TM,J,g^T)$. Then we have
\[R_{i\bar{j}}^T=\frac{1}{4}\beta_{ij}\circ \pi.\]
\end{proposition}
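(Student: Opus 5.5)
This is a direct coordinate computation on $TM$, working in the holomorphic coordinates $z^j=\xi^j+\sqrt{-1}\xi^{n+j}$ of \eqref{e99.2}. The key observation is that every coefficient $g_{ij}\circ\pi$ of $g^T$ depends only on $\xi=x\circ\pi$. For any function $u$ on $M$ we have
\[
\frac{\partial (u\circ\pi)}{\partial z^k}=\frac{\partial (u\circ\pi)}{\partial \bar z^k}=\frac12\,\frac{\partial u}{\partial x^k}\circ\pi .
\]
Applying this twice gives
\[
\frac{\partial^2 (u\circ\pi)}{\partial z^k\partial\bar z^l}=\frac14\,(\partial_k\partial_l u)\circ\pi .
\]

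\textbf{Curvature tensor.} Since $g^T$ is K\"ahler by Proposition \ref{p9.1}, I would use the local formula for the Chern curvature recorded in Section 2:
\[
R^T_{i\bar jk\bar l}=-\frac{\partial^2 g_{i\bar j}}{\partial z^k\partial\bar z^l}+g^{p\bar q}\frac{\partial g_{i\bar q}}{\partial z^k}\frac{\partial g_{p\bar j}}{\partial\bar z^l}.
\]
Substituting the two identities above, this becomes
\[
R^T_{i\bar jk\bar l}=\Bigl(-\tfrac14\,\partial_k\partial_l g_{ij}+\tfrac14\, g^{pq}\,\partial_k g_{iq}\,\partial_l g_{pj}\Bigr)\circ\pi .
\]
Next I would recall from Appendix A the definition of the Hessian curvature $Q$ in affine coordinates. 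With $\gamma=\nabla-D$ and $Q=D\gamma$, one has
\[
Q_{ijkl}=\tfrac12\,\partial_k\partial_l g_{ij}-\tfrac12\, g^{pq}\,\partial_k g_{iq}\,\partial_l g_{pj}.
\]
This uses the Hessian condition, namely that $\partial_k g_{ij}$ is totally symmetric, so that $\gamma^i_{jk}=\tfrac12 g^{il}\partial_j g_{kl}$. Comparing the two expressions gives $R^T_{i\bar jk\bar l}=-\tfrac12\,Q_{ijkl}\circ\pi$.

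\textbf{Ricci form.} For the Ricci tensor I would avoid contracting and use instead the identity
\[
R^T_{i\bar j}=-\partial_i\partial_{\bar j}\log\det g^T .
\]
The determinant $\det g^T$ equals $(\det g)\circ\pi$ in these coordinates, so the second-derivative rule gives
\[
R^T_{i\bar j}=-\tfrac14\,\bigl(\partial_i\partial_j\log\det g\bigr)\circ\pi=\tfrac14\,\beta_{ij}\circ\pi .
\]

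\textbf{Expected obstacle.} The only delicate point is matching normalizations. The sign and the factor $\tfrac12$ in $Q$ depend on exactly how Appendix A defines $Q$, and the relevant identity is the symmetry $Q_{ijkl}=Q_{klij}$ from Appendix A. The factor-of-$4$ bookkeeping from $\partial_z=\tfrac12(\partial_\xi-\sqrt{-1}\partial_{\xi^{n+}})$ must also be tracked carefully. I would fix these by checking the formula for $Q$ in terms of a potential, $g_{ij}=\partial_i\partial_j\phi$, against the definition in Appendix A.
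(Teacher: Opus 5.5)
Your proposal is correct. The paper gives no proof of its own and simply quotes the result from the cited reference, and your argument is the standard direct computation behind it: use $\partial_{z^k}\partial_{\bar z^l}(u\circ\pi)=\frac14(\partial_k\partial_l u)\circ\pi$ in both the K\"ahler (Chern $=$ Levi-Civita) curvature formula and $-\partial_i\partial_{\bar j}\log\det g^T$, then compare with the potential formula for $Q$ in Appendix A and with $\beta_{ij}=-\partial_i\partial_j\log\det g$. One small point: you do not need the symmetry $Q_{ijkl}=Q_{klij}$ you flag, and Appendix A does not state it, because after renaming the dummy indices $p,q$ your expression for $R^T_{i\bar jk\bar l}$ already equals $-\frac12 Q_{ijkl}\circ\pi$ with the indices in the same positions.
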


Let $(M, D, g)$ be a complete affine manifold of dimension $n$. Fix a $(0,2)$-tensor $\chi$ on $(M,g)$, for any $C^2$ function $u:M\rightarrow \mathbb{R}$ we have a new $(0,2)$-tensor $\hat{g}=\chi+D du$, and we can define $A_j^i=g^{ip}\hat{g}_{jp}$. We consider the equation for $u$ as follows:
\begin{equation}\label{e99.3}
F(A)=h(x,u),
\end{equation}
for a given function $h$ on $M$, where
\begin{equation}
F(A)=f(\lambda_1,\ldots,\lambda_n)
\end{equation}
is a smooth symmetric function of the eigenvalues of $A$ and satisfies the structure conditions \textbf{(a1)}-\textbf{(a4)}. Then if $u$ is a solution of
\eqref{e99.3}, by \eqref{e99.2}, we obtain that $u\circ\pi$ satisfies the following equation:
\begin{equation}\label{e99.5}
F((g^T)^{i\bar{p}}(\chi^T_{j\bar{p}}+4(u\circ\pi)_{j\bar{p}}))=h(\pi(x),u\circ\pi)
\end{equation}
on $(TM,g^T)$. This means that we can use the a priori estimates for \eqref{e99.5} to derive the a priori estimates for \eqref{e99.3}.

Note that on affine/Hessian manifolds, we can similarly define bounded geometry, $|\cdot|_{k+\alpha}$, $\tilde{C}^{k+\alpha}(M)$, etc. analogous to Definition \ref{db2.1}. Consequently, the existence results for solutions of \eqref{e99.3} analogous to Theorems \ref{t7.5} can be obtained.

\begin{lemma}\label{l9.1}
Let $(M,g)$ be a complete affine manifold with bounded geometry of order $k-1$, where $k\geq 4$.
Fix a $(0,2)$-tensor $\chi$ with bounded geometry of order $k-1$, and $\chi$ is uniformly equivalent to $g$ on $(M,g)$, given $h\in \tilde{C}^{k-2+\beta}(M)$, where $\beta\in (0,1)$. Consider the following equation on $M$:
\begin{equation}\label{e9.7}
F(g^{ip}(\chi_{jp}+u_{jp}))=h(x)+\epsilon u,
\end{equation}
where $\epsilon$ is a positive constant. Suppose that the above equation satisfies conditions \emph{\textbf{(a1)}}-\emph{\textbf{(a4)}}, $\Gamma= \Gamma_n$ and $F(\alpha^{i\bar{p}}\chi_{j\bar{p}})=0$, suppose $0$ is a $\mathcal{C}$-subsolution of \eqref{e9.7}. Then the equation \eqref{e9.7} exists a unique solution $u\in \tilde{C}^{k+\beta}(M)$ such that $\chi+D du$ is uniformly equivalent to $g$.
\end{lemma}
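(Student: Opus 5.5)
The plan is to transfer the whole continuity-method argument behind Theorem \ref{t7.5} to the complex manifold $(TM,J_D)$ with the Hermitian metric $g^T$. The link is the identity \eqref{e99.5}: if $u$ solves \eqref{e9.7}, then $u\circ\pi$ solves the complex equation
\[
F\bigl((g^T)^{i\bar p}(\chi^T_{j\bar p}+4(u\circ\pi)_{j\bar p})\bigr)=h\circ\pi+\epsilon\, u\circ\pi .
\]
Since the factor $4$ can be absorbed by rescaling $\chi$ and $F$, the structure conditions \textbf{(a1)}--\textbf{(a4)} and the $\mathcal{C}$-subsolution property of $0$ carry over unchanged.

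First I would check that bounded geometry of order $k-1$ for $(M,g)$ and $\chi$ gives bounded geometry of the same order for $(TM,g^T)$ and $\chi^T$. In the holomorphic coordinates $z^j=\xi^j+\sqrt{-1}\xi^{n+j}$ built from an affine chart, the components $g_{ij}\circ\pi$ do not depend on the fibre variables $\xi^{n+j}$. An affine chart of uniform size $r$ therefore gives a polydisc-type chart on $TM$ (affine ball times the fibre ball of radius $r$) on which the $C^{k-1+\alpha}$ norms of $g^T$ and $\chi^T$ equal those of $g,\chi$. The same holds for $h\circ\pi$. The group of fibre translations $\xi^{n+j}\mapsto \xi^{n+j}+c^j$ acts holomorphically and isometrically, so these charts cover all of $TM$ uniformly.

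Next I would establish the a priori estimates. For the $C^0$ bound, Theorem \ref{t4.1} applies directly on $M$ with the real Hessian, or equivalently on $TM$; the generalized maximum principle holds on $M$ by the affine analogue of Corollary \ref{pbb4.3}. For the $C^{k+\beta}$ bound, I apply Theorem \ref{t4.4} to $u\circ\pi$ on $(TM,g^T)$. The only subtlety is that Theorem \ref{t4.4} assumes $u\in\tilde C^{k+\beta}$, which holds because $u\circ\pi$ is fibre-independent. Since complex derivatives of $u\circ\pi$ are real derivatives of $u$ up to the factor $1/2$, the bound descends to $|u|_{k+\beta}\le C$ on $M$.

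Then I would run the continuity method in $\tilde C^{k+\beta}(M)$ exactly as in the proof of Theorem \ref{t7.5}. Openness comes from the linearization $F^{ij}g^{il}v_{jl}-\epsilon v$: it is uniformly elliptic by the $C^2$ bound, injectivity follows from the generalized maximum principle, and existence comes from Dirichlet problems on an exhaustion, the $\sup|v|/\epsilon$ bound, and interior Schauder estimates in the affine charts. Closedness follows from the estimates above, and uniqueness again from the generalized maximum principle applied to the difference of two solutions.

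The step I expect to be the main obstacle is the transfer of Theorem \ref{t4.4}. Its $C^2$ argument uses the generalized maximum principle and the blow-up/Liouville step on $TM$, which is noncompact even in the fibre directions. I would handle this by noting that every quantity involved, such as $H$ and $|\nabla u|$, is fibre-invariant. Sequences of almost-maximum points can therefore be translated back into a fixed fibre neighbourhood, and the uniform charts constructed above supply the cutoff functions required in Proposition \ref{pb4.2}.
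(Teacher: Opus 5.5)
Your proposal is correct and follows the paper's own route. You lift $u$ to $u\circ\pi$ on $(TM,J_D,g^T)$ via \eqref{e99.5}, get the $C^{k+\beta}$ a priori estimates there from the Hermitian theory (Theorems \ref{t4.1} and \ref{t4.4}) and push them down to $u$, and then run the continuity method on $M$ exactly as in Theorem \ref{t7.5}.

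Your check that $(TM,g^T,\chi^T)$ inherits bounded geometry supplies a step the paper only asserts; the fibre translations are only needed locally, inside $\pi^{-1}$ of each affine chart. That check also makes your anticipated obstacle moot, because Theorem \ref{t4.4} is stated for arbitrary complete Hermitian manifolds of bounded geometry and therefore applies to $u\circ\pi$ as a black box.
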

\begin{proof}
First, we consider the a priori estimates for \eqref{e9.7}, if $u\in \tilde{C}^{k+\beta}(M)$ satisfies \eqref{e9.7}, then we have
\begin{equation}\label{e9.8}
F((g^T)^{i\bar{p}}(\chi^T_{j\bar{p}}+4(u\circ\pi)_{j\bar{p}}))=h(\pi(x))+u\circ\pi
\end{equation}
on $TM$. We can easily verify that \eqref{e9.8} satisfies the conditions in Theorem \ref{t7.5}, and thus obtain the a priori estimates for $u\circ\pi$ up to $C^{k+\beta}$. Consequently, we also obtain the a priori estimates for $u$ up to $C^{k+\beta}$. We may then set up the continuity method to solve equation \eqref{e9.7}; the procedure is similar to the proof of Theorem \ref{t7.5}, so we omit the proof here.
\end{proof}

The following lemma is obvious from the definition of a flat connection.
\begin{lemma}\label{l9.2}
Let $(M,D,g)$ be an affine manifold of dimension $n$, assume that $\sum_{k=0}^n|D^k g|_g<\infty$, then for any point $p\in M$, there exists a local affine coordinate system $\{w^1,\cdots,w^n\}$ around $p$ such that $g_{ij}(p)=\delta_{ij}$ and all derivatives of $g(p)$ up to order $n$ are bounded, where the bounds depend only on $n$, $\sum_{k=0}^n|D^k g|_g$.
\end{lemma}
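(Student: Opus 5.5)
The plan is to take affine coordinates adapted to $g$ at $p$, then check what the hypothesis $\sum_{k=0}^n|D^k g|_g<\infty$ says in those coordinates.

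\emph{Choice of coordinates.} Since $(M,D)$ is flat, around any $p$ there is an affine chart $\{x^1,\ldots,x^n\}$: the Christoffel symbols of $D$ vanish identically, and any two such charts differ by an affine map $x\mapsto Px+b$. First translate so that $p$ corresponds to $0$. Then choose $P\in GL(n,\mathbb{R})$ with $P^{T}(g_{ij}(p))P=I$, for example by Gram--Schmidt relative to $g(p)$. Set $w=P^{-1}x$. This is again an affine coordinate system, and in it $g_{ij}(p)=\delta_{ij}$.

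\emph{Derivatives equal covariant derivatives.} The key observation is that in any affine coordinate system $D_{\partial_i}\partial_j=0$. Hence the components of the tensor $D^kg$ are exactly the ordinary partial derivatives:
\[
(D^kg)_{i_1\cdots i_k\,ab}=\frac{\partial^k g_{ab}}{\partial w^{i_1}\cdots\partial w^{i_k}}.
\]
No lower-order correction terms appear, because all Christoffel symbols vanish. This identity holds on the whole chart, not just at $p$.

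\emph{Bound at $p$.} At $p$ the frame $\partial/\partial w^i$ is $g$-orthonormal. So the tensor norm $|D^kg|_g(p)$ is the Euclidean norm of the array of components. Consequently
\[
\Big|\frac{\partial^k g_{ab}}{\partial w^{i_1}\cdots\partial w^{i_k}}(p)\Big|\le |D^kg|_g(p)\le \sum_{m=0}^n\sup_M|D^mg|_g
\]
for every $k\le n$. This is the claimed bound, and it depends only on $n$ and $\sum_{k=0}^n|D^kg|_g$.

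\emph{Where the difficulty lies.} The only step needing care is that the bound is pointwise at $p$. If a uniform bound on a ball of fixed radius were wanted, one would integrate the $(n+1)$-st derivative bound along lines. That would need the derivatives of $g_{ij}$ and of $g^{ij}$ (which enters the norms away from $p$) to stay controlled, via an ODE/Gronwall comparison on $g$ along straight lines in $w$. The statement as given does not require this, so I do not expect a real obstacle.
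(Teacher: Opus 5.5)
Your proof is correct, and it is exactly the argument the paper has in mind when it says the lemma is obvious from the definition of a flat connection. In affine coordinates the Christoffel symbols of $D$ vanish, so the components of $D^kg$ are the ordinary partial derivatives of $g_{ij}$; after an affine change normalizing $g_{ij}(p)=\delta_{ij}$, each component at $p$ is bounded by $|D^kg|_g(p)$, with no higher-order polynomial coordinate change needed as in the complex analogue Lemma \ref{l7.4}.
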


Suppose that $\sum_{k=0}^2|D^k g|_g<\infty$, there exists a exhaustion function on $(M,D,g)$, then we can also define $(U_{\rho_0},\tilde{g})$ as in section 4, by \cite{JY25}, we have the following:
\begin{lemma}\label{l9.3}\emph{(Jiao-Yin)}
$(U_{\rho_0},\tilde{g})$ has bounded geometry of infinity order.
\end{lemma}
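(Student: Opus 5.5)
The plan is to follow Lee-Tam's argument for Lemma \ref{l7.3}, written in the affine (real) setting, and to keep track of how the flat coordinates interact with the conformal change $\tilde g=e^{2\tilde F}g$, where $\tilde F=\mathfrak F(\tilde\rho/\rho_0)$.

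\textbf{Step 1: completeness of $\tilde g$ on $U_{\rho_0}$.} As $\tilde\rho/\rho_0\to1$, the function $\mathfrak F$ behaves like $-\log(1-s)$ up to a bounded error. Hence $e^{\tilde F}\sim \rho_0/(\rho_0-\tilde\rho)$ near $\partial U_{\rho_0}$. Since $|D\tilde\rho|_g$ is bounded, any $\tilde g$-path leaving every compact subset of $U_{\rho_0}$ has length bounded below by a multiple of $\int \frac{ds}{\rho_0-s}=\infty$. So $(U_{\rho_0},\tilde g)$ is complete.

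\textbf{Step 2: bounded derivatives of the conformal factor.} Lemma \ref{l77.3}(ii) says that $e^{-k\mathfrak F}\mathfrak F^{(k)}$ is bounded for every $k$. Lemma \ref{l7.2} (its affine analogue) gives bounds on $D\tilde\rho$ and $D^2\tilde\rho$. Combining these, I will obtain
\[
|D^k\tilde F|_{\tilde g}=e^{-k\tilde F}|D^k\tilde F|_g\le C_k
\]
for all $k$. This bound is uniform in $\rho_0$, in the same way as \eqref{e7.27}–\eqref{e7.28}.

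This is the step I expect to be the \textbf{main obstacle}. Bounds on higher derivatives of $\tilde\rho$ are not given directly by Lemma \ref{l7.2}. My first attempt is to smooth $\tilde\rho$ with a Shi-type heat flow, or by convolution in affine charts, so that $|D^k\tilde\rho|_g\le C_k$ holds for all $k$. This should be available from the hypothesis $\sum_{k\le 2}|D^kg|_g<\infty$ together with the flatness of $D$.

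\textbf{Step 3: uniform affine charts.} Around each point $x$, I will take affine coordinates in which $g(x)=\delta$, as in Lemma \ref{l9.2}. I then rescale them by the factor $e^{\tilde F(x)}$, i.e. set $y=e^{\tilde F(x)}w$. This is still an affine map, so $D$ stays flat in the new coordinates and $\tilde g(x)=\delta$. In the $y$-coordinates,
\[
\tilde g_{ij}=e^{2(\tilde F-\tilde F(x))}g_{ij}.
\]
By Step 2, $\tilde F-\tilde F(x)$ and all its $y$-derivatives are bounded on a $y$-ball of fixed radius $r$. The $y$-derivatives of $g_{ij}$ pick up factors $e^{-\tilde F(x)}\le 1$, so they are controlled by the $w$-derivative bounds. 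Altogether this gives uniform $C^k$ bounds for $\tilde g_{ij}$ on $D(r)$, together with the two-sided equivalence to the Euclidean metric.

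\textbf{Step 4: handling the injectivity issue.} The injectivity of the chart map is delicate, and I will handle it as Lee-Tam do. The affine development is a local diffeomorphism. On the other hand, the exponential map may fail to be injective, and this is why Lee-Tam only obtain quasi-bounded geometry before passing to bounded geometry. To make the charts injective, I will shrink $r$ using the fact that $e^{\tilde F(x)}$ blows up near $\partial U_{\rho_0}$. Then $D(r)$ corresponds to a region of $g$-size $re^{-\tilde F(x)}$, which lies inside a single affine chart.
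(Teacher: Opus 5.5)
Your scheme (completeness from $\mathfrak F(s)=-\log(1-s)+O(1)$ as $s\to 1$, control of $\tilde F$ through the bounds on $e^{-k\mathfrak F}\mathfrak F^{(k)}$, and affine charts rescaled by $e^{\tilde F(x)}$) is the Lee--Tam argument that the paper imports from \cite{JY25}. Step 1 and the low-order part of Steps 2--3 are fine.

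The gap is the \emph{infinity} order. In Step 3 you need $\partial_y^k\tilde g_{ij}$ bounded for every $k$, and for the $w$-components the chain rule gives $\partial_y^k g_{ij}=e^{-k\tilde F(x)}\partial_w^k g_{ij}$. Near $\partial U_{\rho_0}$ the factor $e^{-k\tilde F(x)}$ helps. On $\{\tilde\rho<(1-\kappa+\kappa^2)\rho_0\}$, however, $\tilde F\equiv 0$ and $\tilde g=g$, so you need bounds on $\partial_w^k g_{ij}$ for all $k$. The standing hypothesis $\sum_{k\le 2}|D^kg|_g<\infty$ (via Lemma \ref{l9.2}) controls only finitely many of these. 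No smoothing of $\tilde\rho$ can repair this, because it is $g$ itself whose higher derivatives are missing. Moreover, the smoothing you propose is only asserted: a heat flow or a mollifier adapted to $g$ yields only as many derivatives as the regularity of $g$ permits. As written, your argument gives bounded geometry of some finite order. In particular, the claim in Step 2 that $|D^k\tilde F|_{\tilde g}\le C_k$ for all $k$, uniformly in $\rho_0$, is not available.

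The missing observation is that nothing here has to be uniform in $\rho_0$. Since $\tilde\rho\ge 1+\rho$ and $g$ is complete, $\overline{U_{\rho_0}}$ is compact. Cover it by finitely many affine charts with a Lebesgue number $r_1=r_1(\rho_0)>0$. On these charts $g_{ij}$ and the smooth function $\tilde\rho$ have all derivatives bounded, with constants depending on $\rho_0$. Feeding these bounds into your Step 3 gives all-order bounds for $\tilde g_{ij}$ in the rescaled charts. The Lebesgue number also gives injectivity at interior points, where $e^{\tilde F(x)}$ does not blow up; your Step 4 only handles points near $\partial U_{\rho_0}$.

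This is how the paper uses the lemma. The $\rho_0$-dependent infinite-order bounded geometry is used only qualitatively, to run Lemma \ref{l9.1} on $U_{\rho_0}$, as in the proof of Lemma \ref{l7.5}, which explicitly invokes compactness of $\overline{U_{\rho_0}}$. The $\rho_0$-independent control needed to pass to the limit comes from Lemma \ref{l9.4}, which involves only $D\tilde\rho$, $D^2\tilde\rho$ and at most two derivatives of $g$.
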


Similar to Lemma \ref{l7.5}, we obtain:
\begin{lemma}\label{l9.4}
Suppose that
\begin{equation}
\max\Big\{\sum_{k=0}^2|D_g^k g|_g, \sum_{k=0}^2|D_g^k \chi|_g, \sum_{k=0}^2|D_g^k h|_g\Big\}\leq K,
\end{equation}
where $\chi$ and $h$ are as in \eqref{e9.7}. Then we have
\begin{equation}
\max\Big\{\sum_{k=0}^2|D_{\tilde{g}}^k {\tilde{g}}|_{\tilde{g}}, \sum_{k=0}^2|D_{\tilde{g}}^k (e^{2\tilde{F}}\chi)|_{\tilde{g}}, \sum_{k=0}^2|D_{\tilde{g}}^k h|_{\tilde{g}}\Big\}\leq K'
\end{equation}
provided $\rho_0$ is large enough, where $K'$ is a constant independent of $\rho_0$. Moreover
\begin{equation}
|D_{\tilde{g}}^3 {\tilde{g}}|_{\tilde{g}}+|D_{\tilde{g}}^3 (e^{2\tilde{F}}\chi)|_{\tilde{g}}+|D_{\tilde{g}}^3 f|_{\tilde{g}}<\infty.
\end{equation}
\end{lemma}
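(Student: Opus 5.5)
The plan is to mirror the proof of Lemma \ref{l7.5}, but in the flat affine setting, where the computation is simpler because $D$ is flat and affine coordinates are available. Write $\tilde{g}=e^{2\tilde{F}}g$ with $\tilde{F}=\mathfrak{F}(\tilde{\rho}/\rho_0)$. The first step is to record bounds on $\tilde{F}$. The exhaustion function $\tilde{\rho}$ (which exists since $\sum_{k\le 2}|D^kg|_g<\infty$) satisfies $|D\tilde{\rho}|_g+|D^2\tilde{\rho}|_g\le C$. Combined with Lemma \ref{l77.3}(ii), this gives, exactly as in \eqref{e7.27} and \eqref{e7.28},
\[
|D\tilde{F}|_{\tilde{g}}=e^{-\tilde{F}}\rho_0^{-1}\mathfrak{F}'|D\tilde{\rho}|_g\le C\rho_0^{-1},\qquad |D^2\tilde{F}|_{\tilde{g}}\le C\rho_0^{-1}.
\]
Here each derivative of $\tilde F$ produces a factor $\mathfrak{F}^{(k)}$. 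This factor is compensated by $e^{-k\tilde F}$, and each $\tilde g$-norm of a $k$-tensor carries exactly such a weight. The constant $C$ is independent of $\rho_0$.

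The second step is to expand the quantities in the statement at a fixed point $p$. I will use the affine coordinates of Lemma \ref{l9.2}, in which $g_{ij}(p)=\delta_{ij}$ and derivatives up to order two are controlled by $K$. Since $D$ is the flat connection, $D^k_{\tilde g}$ is just the coordinate derivative, and only the norms change under the conformal factor. Consider first $D^k\tilde g=D^k(e^{2\tilde F}g)$. By Leibniz it is a sum of terms $e^{2\tilde F}\,D^{a}(\text{polynomial in }D\tilde F,D^2\tilde F)\otimes D^{b}g$ with $a+b=k$. Measured in $\tilde g$, the factor $e^{2\tilde F}$ is absorbed by the norm weights. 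The $D\tilde F$ and $D^2\tilde F$ factors are $O(\rho_0^{-1})$, and the $D^bg$ factors are bounded by $K$. Hence $|D^k_{\tilde g}\tilde g|_{\tilde g}\le C(K)$ for $k\le 2$ once $\rho_0\ge 1$.

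The same expansion bounds $D^k(e^{2\tilde F}\chi)$. For $h$, no conformal factor appears, and $|D^kh|_{\tilde g}=e^{-k\tilde F}|D^kh|_g\le K$. This yields $K'$ independent of $\rho_0$.

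For the third-order finiteness, I use Lemma \ref{l9.3}, which says $(U_{\rho_0},\tilde g)$ has bounded geometry of infinity order. In those charts, $|D^3_{\tilde g}\tilde g|$ is finite. Moreover, $\tilde F$ vanishes on $\{\tilde\rho<(1-\kappa+\kappa^2)\rho_0\}$, and near $\partial U_{\rho_0}$ the quantities $e^{-3\tilde F}\mathfrak{F}^{(3)}$ etc.\ are bounded by Lemma \ref{l77.3}. The terms involving $D^3\chi$ and $D^3 f$ on the relatively compact set $\overline{U_{\rho_0}}$ are bounded by smoothness, with bounds allowed to depend on $\rho_0$.

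The main obstacle I expect is checking that every product term really carries enough factors of $e^{-\tilde F}$ to cancel the possibly unbounded $\mathfrak F^{(k)}$ near $\partial U_{\rho_0}$, including mixed products such as $(\mathfrak F')^2$. I plan to handle this by counting weights: a term with $m$ derivatives falling on $\tilde F$ in total has weight $e^{-m\tilde F}$, and $e^{-m\tilde F}\prod_j\mathfrak F^{(k_j)}$ with $\sum k_j= m$ is bounded by Lemma \ref{l77.3}(ii) applied to each factor.
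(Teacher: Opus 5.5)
Your proposal is correct and matches the paper's argument. The paper obtains this lemma ``similar to Lemma \ref{l7.5}'', namely from the $O(\rho_0^{-1})$ bounds on $|D\tilde F|_{\tilde g}$ and $|D^2\tilde F|_{\tilde g}$ (via Lemma \ref{l77.3} and the exhaustion function), a direct expansion in the affine coordinates of Lemma \ref{l9.2}, and then bounded geometry (Lemma \ref{l9.3}) plus compactness of $\overline{U_{\rho_0}}$ and Lemma \ref{l77.3} for the third-order finiteness, with your weight count closing the expansion since any surplus factor $e^{-(k-m)\tilde F}$ is at most $1$ because $\tilde F\ge 0$.
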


\begin{lemma}\label{l9.5}
Let $(M,D,g)$ be a complete affine manifold, $\chi$ be a $(0,2)$-tensor which is uniformly equivalent to $g$, and $h$ be a smooth function.
Assume that
\begin{equation}
\sum_{k=0}^2|D_g^k g|_g+\sum_{k=0}^2|D_g^k \chi|_g+\sum_{k=0}^2|D_g^k h|_g<\infty
\end{equation}
Consider the following equation on $M$:
\begin{equation}\label{e9.12}
F(g^{ip}(\chi_{jp}+u_{jp}))=h(x)+\epsilon u,
\end{equation}
where $\epsilon$ is a positive constant. Suppose that the above equation satisfies conditions \emph{\textbf{(a1)}}-\emph{\textbf{(a4)}}, $\Gamma= \Gamma_n$ and $F(\alpha^{i\bar{p}}\chi_{j\bar{p}})=0$, suppose $0$ is a $\mathcal{C}$-subsolution of \eqref{e9.12}. Then the equation \eqref{e9.12} exists a unique solution $u\in C^{\infty}(M)$ such that $\chi+D du$ is uniformly equivalent to $g$.
\end{lemma}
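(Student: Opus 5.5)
We will follow the proof of Theorem \ref{t7.6}, replacing Lemmas \ref{l7.3} and \ref{l7.5} by their affine counterparts Lemmas \ref{l9.3} and \ref{l9.4}, and Theorem \ref{t7.5} by Lemma \ref{l9.1}.

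\emph{Exhaustion and conformal modification.} Since $\sum_{k=0}^2|D^k_g g|_g<\infty$, there is an exhaustion function $\tilde{\rho}$ uniformly equivalent to the distance function. Let $U_{\rho_k}$ be the sublevel components, $\tilde F=\mathfrak{F}(\tilde\rho/\rho_k)$ and $\tilde g=e^{2\tilde F}g$, with $\rho_k\to\infty$. On $U_{\rho_k}$ we consider
\begin{equation*}
F\big(\tilde g^{ip}((e^{2\tilde F}\chi)_{jp}+(u_k)_{jp})\big)=h+\epsilon u_k .
\end{equation*}
Because $\tilde g^{ip}(e^{2\tilde F}\chi)_{jp}=g^{ip}\chi_{jp}$, the normalization $F(g^{-1}\chi)=0$ and the $\mathcal{C}$-subsolution property of $0$ are preserved. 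Uniform equivalence of $e^{2\tilde F}\chi$ with $\tilde g$ is also preserved. By Lemma \ref{l9.3} the pair $(U_{\rho_k},\tilde g)$ has bounded geometry of infinity order. By Lemma \ref{l9.4}, $e^{2\tilde F}\chi$ and $h$ have bounded geometry of the order needed by Lemma \ref{l9.1}, since $\overline{U_{\rho_k}}$ is compact. Lemma \ref{l9.1} therefore gives a unique solution $u_k$ with $e^{2\tilde F}\chi+Ddu_k$ uniformly equivalent to $\tilde g$.

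\emph{Uniform estimates.} Theorem \ref{t4.1} gives $\sup|u_k|\le \epsilon^{-1}\sup|h|$, independent of $k$. The main step is to show that the $C^2$ bound and the equivalence constant are independent of $\rho_k$. The plan is to lift to $(TU_{\rho_k},\tilde g^T)$, which is Kähler by Proposition \ref{p9.1}, and apply the estimates of Section 3 (Theorem \ref{t4.4}) to $u_k\circ\pi$. The constants there depend only on the quantities bounded uniformly by Lemma \ref{l9.4}: $\sum_{k\le 2}|D^k_{\tilde g}\tilde g|$, the first two derivatives of $e^{2\tilde F}\chi$ and of $h$, and $\sup|u_k|$.

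The obstacle is that Section 3 is phrased in terms of bounded geometry of order $2$. We must check that the $C^2$ argument only uses control of the metric up to second derivatives in the charts of Lemma \ref{l9.2}. The curvature of $\tilde g^T$ is given by Proposition \ref{p9.2}, namely $Q$, which involves $D^2\tilde g$ and $(D\tilde g)^2$. With that check, we get
\begin{equation*}
C^{-1}\tilde g\le e^{2\tilde F}\chi+Ddu_k\le C\tilde g
\end{equation*}
with $C$ independent of $k$. Since the equation is real and uniformly elliptic, the Evans–Krylov and Schauder interior estimates on compact sets, where $\tilde F=0$, then give uniform $C^{4,\beta}$ bounds.

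\emph{Limit, regularity and uniqueness.} A diagonal Arzelà–Ascoli argument over an exhaustion $\Omega_i$ produces $u\in C^4(M)$ solving \eqref{e9.12} with $\chi+Ddu$ uniformly equivalent to $g$. Bootstrapping with linear interior regularity gives $u\in C^\infty$. For uniqueness, if $u,\tilde u$ are two such solutions, then $w=\tilde u-u$ satisfies a linear uniformly elliptic equation $a^{ij}w_{ij}=\epsilon w$. This follows from the mean value formula and concavity of $F$, with ellipticity from the uniform equivalence. Applying the generalized maximum principle, Proposition \ref{pb4.2}, on $(M,g)$ or on its lift $(TM,g^T)$, where Lemma \ref{l2.5} provides the cutoff functions, yields $\sup w\le 0$ and $\inf w\ge 0$, so $w\equiv 0$.
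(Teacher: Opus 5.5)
Your proposal follows the paper's proof: exhaust by $(U_{\rho_k},e^{2\tilde F}g)$, solve there with Lemma \ref{l9.1}, get $\rho_k$-independent estimates from Lemma \ref{l9.4}, and pass to the limit as in the proof of Theorem \ref{t7.6}, with uniqueness from the generalized maximum principle. One small correction: $e^{2\tilde F}g$ is in general not Hessian, and $g$ itself is not assumed Hessian, so $\tilde g^T$ is only Hermitian rather than K\"{a}hler and Proposition \ref{p9.2} does not apply to it. This does no harm, because the Section 3 estimates are stated for Hermitian metrics and the Chern torsion and curvature of $\tilde g^T$ are still controlled by $D\tilde g$ and $D^2\tilde g$.
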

\begin{proof}
Let $\{\rho_k\}$ be a sequence such that $\rho_k\rightarrow \infty$, on $(U_{\rho_k},e^{2\tilde{F}}g)$, we consider the following equation:
\begin{equation}
F((e^{2\tilde{F}}g)^{ip}((e^{2\tilde{F}}\chi)_{jp}+(u_k)_{jp}))=h(x)+\epsilon u_k.
\end{equation}
The rest of the proof is similar to that of Theorem \ref{t7.6}, both use $u_k$ to approximate the solution of the original equation, and we only need use Lemma \ref{l9.1}-\ref{l9.4} instead of Theorem \ref{t7.5}, Lemma \ref{l7.4}, \ref{l7.3}, \ref{l7.5}.
\end{proof}
Then, we will construct the Hesse-Einstein metrics on complete Hessian manifolds with negative form $\beta$.

\begin{proof}[Proof of Theorem \ref{t1.6}]
When $(M,D,g)$ is compact, the theorem was proved by Cheng-Yau \cite{CY82}. Therefore we only need to consider the case when $(M,D,g)$ is noncompact.

Since that the second Koszul form $M$ is bounded from below by a positive constant, $-\beta (g)=2\kappa(g)$ can be seen as a Hessian metric on $M$. Consider the following equation
\begin{equation}\label{e9.16}
\det(-\beta_{ij}(g)+u_{ij})=e^{u}\exp^{\{\log\frac{\det(g)}{\det(-\beta (g))}\}}\det(-\beta_{ij}(g)),
\end{equation}
we can verify that the above equation satisfies the conditions in Lemma \ref{l9.5}, then \eqref{e9.16} exists a solution $u\in C^\infty(M)$, and we have
\begin{equation}
\begin{aligned}
\beta_{ij}(-\beta(g)+D du)&=-\frac{\partial^2}{\partial x^i\partial x^j}[\log \det(-\beta_{pq}(g)+u_{pq})]\\
&=-\frac{\partial^2}{\partial x^i\partial x^j}[u+\log\frac{\det(g)}{\det(-\beta (g))}+\log \det(-\beta_{pq}(g))]\\
&=-(-\beta_{ij}(g)+u_{ij}).
\end{aligned}
\end{equation}
Then, $(-\beta(g)+D du)$ is the complete Hesse-Einstein metric as we need.

By lifting $(M,D,g)$ to its tangent bundle $(TM,J_D,g^T)$, we obtain that $g^T$ is a K\"{a}hler-Einstein metric on $TM$ with negative scalar curvature (see Proposition \ref{p9.1} and \ref{p9.2}). The completeness of the metric $g^T$ is based on Lemma 6.1 of Jiao-Yin \cite{JY25}, then the uniqueness of $g$ follows immediately from the uniqueness of $g^T$ (see Proposition 5.5 of \cite{CY80}).
\end{proof}
The above theorem removes the bounded geometry condition in Theorem 1.9 of \cite{Yin25}.

\appendix

\section{Affine/Hessian manifolds}

\begin{definition} A connection $D$ is said to be flat if the torsion tensor $T$ and the curvature tensor $R$ vanish identically. An affine manifold $(M,D)$ is a differentiable manifold endowed with a flat connection $D$.
\end{definition}
The subsequent statement for flat manifolds are well know. For the proof see \cite{r11} section 8.1.
\begin{proposition}
\label{prop1}
\

\emph{(1)}
\begin{minipage}[t]{0.92\linewidth}
Suppose that $M$ admits a flat connection $D$. Then there exist local coordinate systems on $M$ such that $D_{\partial /\partial x^i} \partial /\partial x^j=0$. The changes between such coordinate systems are affine transformations.
\end{minipage}

\emph{(2)}
\begin{minipage}[t]{0.92\linewidth}
Conversely, if $M$ admits local coordinate systems such that the changes of the local coordinate systems are affine transformations, then there exists a flat connection $D$ satisfying $D_{\partial /\partial x^i} \partial /\partial x^j=0$ for all such local coordinate systems.
\end{minipage}
\end{proposition}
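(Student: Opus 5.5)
The statement to prove is Proposition \ref{prop1}. Both parts are local and elementary, and I would prove them directly from the definition of a flat connection.

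\emph{Part (1).} Fix $p\in M$ and a frame $e_1,\ldots,e_n$ of $T_pM$. Since the curvature $R$ of $D$ vanishes, parallel transport on a simply connected neighbourhood $U$ of $p$ is path-independent. Concretely, the system $DX=0$ is a Frobenius-integrable first-order system, and its integrability condition is exactly $R=0$. Hence each $e_i$ extends to a $D$-parallel vector field $X_i$ on $U$, and the $X_i$ remain linearly independent.

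Next I use the vanishing of the torsion:
\[
[X_i,X_j]=D_{X_i}X_j-D_{X_j}X_i-T(X_i,X_j)=0 .
\]
So the $X_i$ are commuting vector fields. By the standard straightening theorem for commuting frames there are coordinates $x^i$ near $p$ with $\partial/\partial x^i=X_i$, and therefore $D_{\partial_i}\partial_j=0$.

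For the transition claim, take two such systems $x$ and $y$. The Christoffel symbols of $D$ vanish in both, and the transformation law gives
\[
0=\Gamma'=\frac{\partial^2 x^k}{\partial y^i\partial y^j}\frac{\partial y^l}{\partial x^k}.
\]
Hence $\partial^2 x/\partial y\partial y=0$, so $x$ is an affine function of $y$ on each connected overlap.

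\emph{Part (2).} On each chart of the given atlas, define $D$ by declaring $D_{\partial_i}\partial_j=0$, that is, all Christoffel symbols equal zero. I need this to be well defined on overlaps. By the same transformation law, the Christoffel symbols transform by adding the term $\frac{\partial^2 x}{\partial y\partial y}$. This term vanishes because the transition maps are affine, so the local definitions agree on overlaps. Torsion and curvature are tensors given in these charts by expressions in $\Gamma$ and $\partial\Gamma$, so both vanish.

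\emph{Main obstacle.} The only nontrivial point is the existence of parallel frames in part (1). It should be justified either by citing Frobenius applied to the horizontal distribution on the frame bundle, or by solving $\partial_i X^k+\Gamma^k_{ij}X^j=0$ in a chart. The compatibility condition for that system is precisely the vanishing of $R$.
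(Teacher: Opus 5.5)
Your proof is correct. It is the standard argument: $R=0$ gives local parallel frames, $T=0$ makes them commute and hence coordinate fields, and the inhomogeneous term in the Christoffel transformation law handles both the affine transition maps and the well-definedness of $D$ in the converse. The paper gives no proof of its own and simply refers to \cite{r11}, where essentially this argument is used.
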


Recently, Lee-Topping \cite{LT25} solved the Hamilton's pinching conjecture, they prove that a class of Riemannian manifolds is flat, see the following:

\begin{theorem}
Suppose $(M^3,g_0)$ is a complete (connected) noncompact three-dimensional Riemannian manifold with ${\rm Ric}\geq \varepsilon \mathcal{R}\geq 0$ for some $\varepsilon>0$. Then $(M^3,g_0)$ is flat.
\end{theorem}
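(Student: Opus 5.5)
The plan is to follow the Ricci flow approach. The bounded curvature case was handled earlier by Chen--Zhu, Lott and Deruelle--Schulze--Simon. The new difficulty in the statement is that $(M^3,g_0)$ is only assumed complete, with no bound on its curvature. I would proceed in three stages.

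\emph{Step 1 (existence of a pinched flow).} First I would construct a complete Ricci flow $g(t)$, $t\in[0,\infty)$, with $g(0)=g_0$, such that the pinching ${\rm Ric}\geq \varepsilon' \mathcal{R}\geq 0$ holds for all $t>0$ and the curvature decays like $|Rm|_{g(t)}\leq C/t$.
\begin{itemize}
\item Since $g_0$ may have unbounded curvature, I would exhaust $M$ by domains carrying conformally modified complete metrics of bounded curvature, in the same spirit as the modification $\tilde g=e^{2\tilde F}g$ of Section 4.
\item I would run Shi's flow on each piece and use pseudolocality-type local estimates in dimension three. These come from the local lower Ricci bounds, which are preserved almost-locally.
\item To obtain existence time and curvature bounds $c_0/t$ independent of the exhaustion, I would show that the pinching condition is preserved by a maximum principle argument, in the same spirit as Lemma \ref{lb1}. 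It must be applied to the quantity ${\rm Ric}-\varepsilon\mathcal{R}g$ in local form.
\item Passing to a limit then gives the global flow.
\end{itemize}

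\emph{Step 2 (blow-down).} Next I would show that $(M,g_0)$ has positive asymptotic volume ratio $\mathrm{AVR}(g_0)>0$.
\begin{itemize}
\item Pinching together with $\mathcal{R}\geq 0$ forces the scale-invariant quantity $t\,\mathcal{R}$ to decay in a controlled way.
\item Combined with Bishop--Gromov, which applies since ${\rm Ric}\geq 0$, this should yield Euclidean volume growth.
\item I would then rescale, $g_k(t)=k^{-1}g(kt)$. The uniform bound $|Rm|\leq C/t$ and non-collapsing give Hamilton compactness, so a subsequence converges to a limit flow. This limit flow emanates from the asymptotic cone, is nonflat if $g_0$ is nonflat, and still satisfies the pinching.
\end{itemize}

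\emph{Step 3 (rigidity).} Finally I would identify the limit as an expanding gradient Ricci soliton, using monotonicity of an expander entropy or of a Hamilton-type Harnack quantity. On such a soliton, the pinching ${\rm Ric}\geq \varepsilon\mathcal{R}$ together with the soliton identities forces $\mathcal{R}\equiv 0$. The argument is to integrate $\Delta_f \mathcal{R}$ against the weighted measure, or to use a maximum principle for $\mathcal{R}/|\nabla f|^{-2}$. Since ${\rm Ric}\geq 0$ and $\mathcal{R}\equiv 0$, the soliton is flat. Flatness of the blow-down means $\mathrm{AVR}(g_0)=1$, and equality in Bishop--Gromov then gives that $g_0$ is flat.

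I expect the main obstacle to be Step 1: constructing the flow from an initial metric with unbounded curvature while keeping the pinching and the $C/t$ decay uniform. The pinching is preserved cleanly only under bounded curvature, so I would first try a local version of the pinching estimate, controlled by the local curvature bounds $c_0/t$, and close it using the three-dimensional local existence theory.
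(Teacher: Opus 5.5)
The paper contains no proof of this statement. It is quoted in Appendix A from Lee--Topping \cite{LT25} and never used, so the only benchmark is their Ricci-flow proof.

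Your outline has the same overall shape: build a complete pinched flow from $g_0$ with $|Rm|\le C/t$, then run the blow-down and expander analysis known from the bounded-curvature case. But the step you flag as the main obstacle is where the real content of that proof lies, and the tools you propose for it do not apply.

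\begin{itemize}
\item The modification $\tilde g=e^{2\tilde F}g$ of Section~4 is built from Shi's exhaustion function (Lemma \ref{l7.2}). That lemma needs bounded sectional curvature, which is exactly what is not assumed.
\item A Hochard-type local replacement destroys the pinching, and even $\mathrm{Ric}\ge 0$, near the boundary of each piece. So the global maximum principle of Lemma \ref{lb1} cannot propagate pinching, since it assumes $\sup_{M\times[0,T]}|Rm|^2\le k_0$ and a global bound on $r$.
\item More seriously, the three-dimensional local theory you rely on (almost-preservation of local lower Ricci bounds, local $c_0/t$ estimates, existence times independent of the exhaustion) assumes, as developed by Hochard and Simon--Topping, a uniform non-collapsing bound $\mathrm{Vol}\,B_{g_0}(x,1)\ge v_0$. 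No such bound is available here.
\end{itemize}
Proposing to ``try a local version of the pinching estimate'' gives no mechanism that replaces non-collapsing. Without one, Step~1 does not yield a flow with uniform estimates.

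Step~2 fails as written. Bishop--Gromov gives only monotonicity of volume ratios, never a positive lower bound on $\mathrm{AVR}$. Decay of $t\mathcal{R}$ cannot give one either, and you do not justify that decay. Flat $\mathbb{R}^2\times S^1$ satisfies every hypothesis, its flow is static with $t\mathcal{R}\equiv 0$, and $\mathrm{AVR}=0$. So $\mathrm{AVR}(g_0)>0$ for non-flat $g_0$ is a substantive statement that must use non-flatness, and your argument never does. Without it the rescalings $k^{-1}g(kt)$ may collapse, Hamilton compactness is unavailable, and the closing Bishop--Gromov rigidity has nothing to act on. The collapsed case is exactly what has to be excluded.

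Step~3 is also incomplete, in two places.
\begin{itemize}
\item Hamilton's Harnack inequality needs nonnegative curvature operator. In dimension three, $\mathrm{Ric}\ge\varepsilon\mathcal{R}\ge 0$ does not give this, so self-similarity of the blow-down needs a real argument.
\item Your weighted integration is not justified. Normalizing $\nabla^2 f=\mathrm{Ric}+\frac12 g$, the identity $\Delta\mathcal{R}+\langle\nabla f,\nabla\mathcal{R}\rangle=-\mathcal{R}-2|\mathrm{Ric}|^2$ involves an operator that is symmetric for the weight $e^{f}$, and that weight grows like a Gaussian.
\end{itemize}
The rigidity itself holds by another route. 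From $\nabla\mathcal{R}=-2\,\mathrm{Ric}(\nabla f)$ and pinching you get $\mathcal{R}\le Ce^{-2\varepsilon f}$. A non-collapsed soliton then has faster-than-quadratic curvature decay, hence a smooth flat asymptotic cone, and this forces it to be $\mathbb{R}^3$.
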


Let $(M, D, g)$ be an affine manifold with a Riemannian metric $g$, where $\hat{\nabla}$ denotes the Levi-Civita connection of
$(M,g)$. Defining $\gamma=\hat{\nabla}-D$. Since both connections $D$ and $\hat{\nabla}$ are torsion-free, we obtain
\[\gamma_X Y=\gamma_Y X\]
Furthermore, in affine coordinate systems, the components $\gamma_{\;jk}^i$ of $\gamma$
coincide with the Christoffel symbols $\Gamma_{\;jk}^i$ associated with the Levi-Civita connection $\hat{\nabla}$.

For the pair $(g,D)$, the Hessian curvature tensor is given by $Q=D \gamma$.
A Riemannian metric $g$ on a affine manifold $(M,D)$ is called Hessian if and only if it admits local expression through $g=D d\varphi$. An affine manifold endowed with a Hessian metric is called a Hessian manifold, which is also called K\"{a}hler affine manifolds by Cheng-Yau \cite{CY82}.

Given a flat connection $D$, a local coordinate system $\{x^1,\ldots,x^n\}$ satisfying $D_{\frac{\partial}{\partial x^i}} \frac{\partial}{\partial x^j}=0$ is called an affine coordinate system with respect to $D$.
Let $(M,D,g)$ be a Hessian manifold and its metric $g$ admits local representation
\[g_{ij}=\frac{\partial^2 \phi}{\partial x^i \partial x^j}\]
where $\{x^1,\ldots,x^n\}$ is an affine coordinate system with respect to $D$. The next two results are established in \cite{r11}.

\begin{proposition}\label{p2.3}
Let $(M,D)$ be an affine manifold and $g$ a Riemannian metric on $M$. Then the following are equivalent:

\emph{(1)} $g$ is a Hessian metric

\emph{(2)} $(D_X g)(Y,Z)=(D_Y g)(X,Z)$

\emph{(3)} $\dfrac{\partial g_{ij}}{\partial x^k}=\dfrac{\partial g_{kj}}{\partial x^i}$

\emph{(4)} $g(\gamma_X Y,Z)=g(Y,\gamma_X Z)$

\emph{(5)} $\gamma_{ijk}=\gamma_{jik}$

\end{proposition}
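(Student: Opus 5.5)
The statement to prove is Proposition \ref{p2.3}: for an affine manifold $(M,D)$ and a Riemannian metric $g$, the five conditions are equivalent. I would work entirely in an affine coordinate system $\{x^1,\ldots,x^n\}$, where $D_{\partial_i}\partial_j=0$ (Proposition \ref{prop1}). There all statements become identities among the partial derivatives $\partial_k g_{ij}$ and the Christoffel symbols $\gamma^i_{\;jk}=\Gamma^i_{\;jk}$.

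\textbf{(2)$\Leftrightarrow$(3).} Because the coordinate vector fields are $D$-parallel, $(D_{\partial_k}g)(\partial_i,\partial_j)=\partial_k g_{ij}$. Condition (2) evaluated on coordinate fields is therefore exactly $\partial_k g_{ij}=\partial_i g_{kj}$. Since both sides of (2) are tensorial in $X,Y,Z$, checking it on coordinate fields suffices.

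\textbf{(1)$\Leftrightarrow$(3).} If $g_{ij}=\partial_i\partial_j\varphi$, then (3) is immediate from the symmetry of third derivatives. Conversely, assume (3) and work on a convex (or simply connected) affine chart. For each fixed $j$, the 1-form $g_{ij}\,dx^i$ is closed by (3), so the Poincar\'e lemma gives $g_{ij}=\partial_i\psi_j$. Then $\partial_i\psi_j=g_{ij}=g_{ji}=\partial_j\psi_i$, so $\psi_j\,dx^j$ is closed and equals $d\varphi$. Hence $g_{ij}=\partial_i\partial_j\varphi$ locally, which is what "Hessian" means in the definition. This double application of the Poincar\'e lemma is the only non-algebraic step. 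It is where locality matters, and I expect it to be the main (though mild) obstacle.

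\textbf{(3)$\Leftrightarrow$(5)$\Leftrightarrow$(4).} Lower the index: $\gamma_{ijk}:=g_{il}\gamma^l_{\;jk}=\frac12(\partial_j g_{ik}+\partial_k g_{ij}-\partial_i g_{jk})$. Then
\[
\gamma_{ijk}-\gamma_{jik}=\partial_j g_{ik}-\partial_i g_{jk}.
\]
So (5) holds iff $\partial_j g_{ik}=\partial_i g_{jk}$ for all indices, which is (3). For (4), write $g(\gamma_{\partial_j}\partial_k,\partial_i)=\gamma_{ijk}$ and $g(\partial_k,\gamma_{\partial_j}\partial_i)=\gamma_{kji}$. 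Condition (4) then reads $\gamma_{ijk}=\gamma_{kji}$. Using the symmetry $\gamma_{ijk}=\gamma_{ikj}$ (from torsion-freeness, $\gamma_XY=\gamma_YX$), this is the same as the symmetry in the first two indices, i.e. (5) after relabeling. Once more, tensoriality lets us verify everything on coordinate fields.

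These four links close the chain (1)$\Leftrightarrow$(3)$\Leftrightarrow$(2) and (3)$\Leftrightarrow$(5)$\Leftrightarrow$(4).
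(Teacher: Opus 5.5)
Your proof is correct and follows the standard argument in Shima's book, which the paper cites for this proposition rather than proving it: in affine coordinates everything reduces to $(D_{\partial_k}g)(\partial_i,\partial_j)=\partial_k g_{ij}$, a double Poincar\'e lemma for (1)$\Leftrightarrow$(3), and the identity $\gamma_{ijk}-\gamma_{jik}=\partial_j g_{ik}-\partial_i g_{jk}$ for (3)$\Leftrightarrow$(5). Your (4)$\Leftrightarrow$(5) step is terse but valid: given $\gamma_{ijk}=\gamma_{ikj}$, invariance under swapping the first and third indices is equivalent to total symmetry, and so is invariance under swapping the first two.
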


\begin{proposition}\label{p2.4}
 Let $\hat{R}$ be the Riemannian curvature of a Hessian metric $g=D d\phi$ and $Q=D \gamma$ be the Hessian curvature tensor for $(g,D)$. Then

\emph{(1)} $Q_{ijkl}=\dfrac{1}{2}\dfrac{\partial^4 \phi}{\partial x^i \partial x^j \partial x^k \partial x^l}-\dfrac{1}{2}g^{pq}\dfrac{\partial^3 \phi}{\partial x^i \partial x^k \partial x^p}\dfrac{\partial^3 \phi}{\partial x^j \partial x^l \partial x^q}$

\emph{(2)} $\hat{R}(X,Y)=-[\gamma_X,\gamma_Y],\;\;\;\hat{R}^i_{\;jkl}=\gamma^i_{\;lm}\gamma^m_{\;jk}-\gamma^i_{\;km}\gamma^m_{\;jl}.$

\emph{(3)}
$\hat{R}_{ijkl}=\dfrac{1}{2}(Q_{ijkl}-Q_{jikl})$

\end{proposition}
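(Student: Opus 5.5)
The plan is to work entirely in an affine coordinate system $\{x^1,\ldots,x^n\}$ in which $g_{ij}=\partial_i\partial_j\phi$. I abbreviate $\phi_{ijk}=\partial_i\partial_j\partial_k\phi$ and so on, and use the conventions $Q^i_{\;jkl}=\partial_k\gamma^i_{\;jl}$ (since $Q=D\gamma$ and $D$ has vanishing Christoffel symbols) and $Q_{ijkl}=g_{ip}Q^p_{\;jkl}$. The first step is to identify $\gamma$. As noted above, $\gamma^i_{\;jk}$ equals the Levi-Civita Christoffel symbol $\Gamma^i_{jk}$ in these coordinates. Substituting $g_{ij}=\phi_{ij}$ into the Christoffel formula and using the full symmetry of $\phi_{ijk}$ (Proposition \ref{p2.3}(3)), the three first-derivative terms collapse to one, giving
\[
\gamma^i_{\;jk}=\tfrac12 g^{il}\phi_{ljk},\qquad \gamma_{ijk}=\tfrac12\phi_{ijk},
\]
so $\gamma_{ijk}$ is totally symmetric.

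For (1), I would differentiate $\gamma^p_{\;jl}=\frac12 g^{pq}\phi_{qjl}$ in $x^k$ and use $\partial_k g^{pq}=-g^{pa}\phi_{abk}g^{bq}$. This gives
\[
\partial_k\gamma^p_{\;jl}=\tfrac12 g^{pq}\phi_{qjlk}-\tfrac12 g^{pa}g^{bq}\phi_{abk}\phi_{qjl}.
\]
Lowering $p$ with $g_{ip}$ yields $\frac12\phi_{ijkl}-\frac12 g^{bq}\phi_{ibk}\phi_{jlq}$. After relabelling, this is exactly the claimed formula, with the contracted third derivatives grouped as $(i,k,p)$ and $(j,l,q)$.

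For (2), I write $\hat\nabla=D+\gamma$ and expand the curvature. Because $D$ is flat and torsion-free, this gives
\[
\hat R(X,Y)=(D_X\gamma)_Y-(D_Y\gamma)_X+[\gamma_X,\gamma_Y].
\]
The key point is that the derivative terms are not zero, as they would be for a generic $g$; instead they equal $-2[\gamma_X,\gamma_Y]$. To see this, use the formula just obtained: the fourth-derivative part $\frac12 g^{ip}\phi_{pljk}$ is symmetric in $k,l$ and cancels under antisymmetrization. The remaining quadratic terms are rewritten as products of $\gamma$'s using $g^{ip}g^{ab}\phi_{pak}\phi_{blj}=4\gamma^i_{\;ak}\gamma^a_{\;lj}$, which gives
\[
\partial_k\gamma^i_{\;lj}-\partial_l\gamma^i_{\;kj}=-2\bigl(\gamma^i_{\;ka}\gamma^a_{\;lj}-\gamma^i_{\;la}\gamma^a_{\;kj}\bigr).
\]
Adding $[\gamma_X,\gamma_Y]$ then leaves $-[\gamma_X,\gamma_Y]$. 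Writing this in components with the paper's index placement gives $\hat R^i_{\;jkl}=\gamma^i_{\;lm}\gamma^m_{\;jk}-\gamma^i_{\;km}\gamma^m_{\;jl}$.

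For (3), I subtract the formula in (1) with $i,j$ swapped. The $\phi_{ijkl}$ terms cancel by symmetry, leaving
\[
Q_{ijkl}-Q_{jikl}=-\tfrac12 g^{pq}\bigl(\phi_{ikp}\phi_{jlq}-\phi_{jkp}\phi_{ilq}\bigr)=-2g^{pq}\bigl(\gamma_{ikp}\gamma_{jlq}-\gamma_{jkp}\gamma_{ilq}\bigr).
\]
I would then lower the index in the expression from (2), $\hat R_{ijkl}=\gamma_{ilm}\gamma^m_{\;jk}-\gamma_{ikm}\gamma^m_{\;jl}$, and use the total symmetry of $\gamma_{\cdot\cdot\cdot}$ to match terms. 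This yields $\hat R_{ijkl}=\frac12(Q_{ijkl}-Q_{jikl})$.

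The only real obstacle is bookkeeping of sign and index conventions. The placement of the differentiated index in $Q$, the sign convention for $\hat R$, and which slot is lowered must all agree with those of \cite{r11}. I would fix these by checking (3) against (2) at the end, and adjust the convention for $Q^i_{\;jkl}$ if a sign discrepancy appears.
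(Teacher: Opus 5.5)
Your proof is correct. With $Q^i_{\;jkl}=\partial_k\gamma^i_{\;jl}$, $\hat R(\partial_k,\partial_l)\partial_j=\hat R^i_{\;jkl}\partial_i$ and $\hat R_{ijkl}=g_{ip}\hat R^p_{\;jkl}$, every step checks out, including the key identity $\partial_k\gamma^i_{\;lj}-\partial_l\gamma^i_{\;kj}=-2[\gamma_{\partial_k},\gamma_{\partial_l}]^i_{\;j}$ and the matching in (3) via the total symmetry of $\gamma_{ijk}=\frac12\phi_{ijk}$, so no sign adjustment is needed. The paper gives no proof of its own and simply defers to \cite{r11}, where the argument is essentially this same direct computation in affine coordinates.
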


\begin{definition}
\label{Kos}
Let $(M,D,g)$ be a Hessian manifold and
$v$ the volume element of $g$. The first Koszul form $\alpha$ and the second Koszul form $\kappa$ for $(D,g)$ are defined by
\[D_X v=\alpha(X)v\]
\[\kappa=D \alpha\]
\end{definition}
It follows that
\[\alpha(X)={\rm Tr} {\gamma_X}\]
and
\[\alpha_i=\frac{1}{2}\frac{\partial \log \det[g_{pq}]}{\partial x^i}=\gamma_{\;ki}^k\]
\[\kappa_{ij}=\frac{\partial \alpha_i}{\partial x^j}=\frac{1}{2}\frac{\partial \log \det[g_{pq}]}{\partial x^i \partial x^j}\]
locally.

\section{Maximum principles}

Let $(M,\omega_0)$ be an $n$-dimensional complete noncompact K\"{a}hler manifold. Suppose that $\omega(x,t)>0$ is a smooth solution for the following K\"{a}hler-Ricci flow
\begin{equation}
\left\{ \begin{aligned}
   &\frac{\partial }{\partial t}{g_{\alpha\overline{\beta}}}(x,t)=-4R_{\alpha\overline{\beta}} (x,t)\;\;\;\;{\rm on}\;M\times[0,T]\\
      &g_{\alpha\overline{\beta}}(x,0)= (g_0)_{\alpha\overline{\beta}}(x)\;\;\;\;x\in M
\end{aligned} \right.
\end{equation}
where $g(x,t)$ and $g_0$ are the K\"{a}hler metric with respect to $\omega(x,t)$ and $\omega_0$, $T$ is a positive constant. Assume that the curvature tensor $Rm(x,t)$ of $\omega(x,t)$ satisfies
\begin{equation}
\sup_{M\times[0,T]}|Rm(x,t)|^2\leq k_0
\end{equation}
for some constant $k_0>0$.
\begin{lemma}\label{lb1}
With the above assumption, let $\{W_{\alpha\bar{\beta}}\}$ be a smooth tensor on $M$ satisfying $\overline{W}_{\alpha\bar{\beta}}(x,t)=W_{\beta\bar{\alpha}}(x,t)$ and
\begin{equation}
(\frac{\partial}{\partial t}W_{\alpha\bar{\beta}})\eta^{\alpha}\bar{\eta}^{\beta}
\leq (\Delta W_{\alpha\bar{\beta}})\eta^{\alpha}\bar{\eta}^{\beta}+C_1|\eta|^2_{\omega(x,t)}
\end{equation}
for all $x\in M$, $\eta\in T^{1,0}_xM$, $0\leq t\leq T$, where $\Delta=2g^{\alpha\bar{\beta}}(x,t)(\nabla_{\bar{\beta}}\nabla_\alpha+\nabla_\alpha\nabla_{\bar{\beta}})$ and $C_1$ is a constant. Let
\begin{equation}
r(x,t)=\max\{W_{\alpha\bar{\beta}}\eta^{\alpha}\bar{\eta}^{\beta};\eta\in T^{1,0}_xM, |\eta|_{\omega(x,t)}=1\}
\end{equation}
for all $x\in M$ and $0\leq t\leq T$. Suppose
\begin{equation}
\sup_{M\times[0,T]}|r(x,t)|\leq C_0,\;\;\;\;\sup_M r(x,0)\leq -\kappa
\end{equation}
for some constants $C_0>0$ and $\kappa$. Then
\begin{equation}
r(x,t)\leq (C_0C_2(n)\sqrt{k_0}+C_1)t-\kappa
\end{equation}
for all $x\in M$, $0\leq t\leq T$, where $C_2(n)$ is a positive constant depends only on $n$.
\end{lemma}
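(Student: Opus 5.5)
The plan is a barrier argument for the maximum principle under the Ricci flow, using Shi's exhaustion function to compensate for the noncompactness of $M$. Since the curvature is bounded by $\sqrt{k_0}$ on $[0,T]$, the metrics $g(t)$ are uniformly equivalent to $g_0$, with constants depending only on $n$, $k_0$, $T$. By Lemma \ref{l7.2} applied to $g_0$ there is a smooth $\tilde\rho$ comparable to the distance function, with $|\nabla\tilde\rho|_{g_0}+|\nabla^2\tilde\rho|_{g_0}\leq C$. The evolution $\partial_t\Gamma=\nabla\,{\rm Ric}$ together with Shi's derivative estimates would give a bound on $\nabla^2\tilde\rho$ for $g(t)$; if those are unavailable, I would use the standard trick $\partial_t\tilde\rho=0$ and bound $\Delta_{g(t)}\tilde\rho\leq C$ via the uniform equivalence and the Hessian comparison for $g_0$. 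Either way $|\Delta_{g(t)}\tilde\rho|\leq C'$ on $M\times[0,T]$.

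Fix $\varepsilon>0$ and set $A=C_0C_2(n)\sqrt{k_0}+C_1$. Define the barrier
\[
\Phi(x,t)=r(x,t)-(A+\varepsilon)t+\kappa-\varepsilon\, e^{Bt}\tilde\rho(x),
\]
with $B$ chosen larger than $C'$. Since $r$ is bounded by $C_0$ and $\tilde\rho\to\infty$, $\Phi<0$ outside a compact set, and $\Phi<0$ at $t=0$. Suppose $\Phi$ first reaches $0$ at some $(x_0,t_0)$ with $t_0>0$. Let $\eta$ be a unit vector at $(x_0,t_0)$ realizing $r$. Extend $\eta$ near $x_0$ by parallel transport along radial geodesics of $g(t_0)$, so that $\nabla\eta=0$ and $\Delta\eta=0$ at $x_0$.

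Next consider $\phi(x,t)=W(\eta,\bar\eta)/|\eta|^2_{g(t)}$. This is a lower support function for $r$ near $(x_0,t_0)$ that touches $r$ there. Its time derivative involves $\partial_t W$ and also the term $-W(\eta,\bar\eta)\,\partial_t|\eta|^2/|\eta|^4=4W(\eta,\bar\eta)\,{\rm Ric}(\eta,\bar\eta)/|\eta|^4$, which is bounded by $C_2(n)C_0\sqrt{k_0}$ using $|W(\eta,\bar\eta)|\leq C_0$ and $|{\rm Ric}|\leq C(n)\sqrt{k_0}$. This is exactly the source of the constant $A$. At the touching point we have $\partial_t\phi\geq (A+\varepsilon)+\varepsilon Be^{Bt}\tilde\rho$ and $\Delta\phi\leq\varepsilon e^{Bt}\Delta\tilde\rho\leq \varepsilon e^{Bt}C'$. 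The hypothesis then gives
\[
\partial_t\phi\leq\Delta\phi+C_1+C_2C_0\sqrt{k_0},
\]
which contradicts these bounds because $B>C'$ and $\tilde\rho\geq1$. Letting $\varepsilon\to0$ yields $r(x,t)\leq At-\kappa$.

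The main obstacle will be handling the Laplacian of the barrier $\tilde\rho$ uniformly in time, and justifying the constant normalization $\Delta=2g^{\alpha\bar\beta}(\cdots)$ against the flow's Laplacian. I will try the time-independent $\tilde\rho$ together with metric equivalence first. If that fails, I will replace $\tilde\rho$ by Shi's time-dependent exhaustion $\varphi(x,t)$ satisfying $(\partial_t-\Delta)\varphi\geq -C$.
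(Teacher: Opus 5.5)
Your barrier argument is correct and is essentially the paper's proof: the paper runs Wu--Yau's Lemma 15 maximum principle on exactly your function $W(\eta,\bar\eta)/|\eta|^2_{\omega(x,t)}-Ct+\kappa$, and in both versions the term $4W(\eta,\bar\eta){\rm Ric}(\eta,\bar\eta)$ coming from $\partial_t|\eta|^2$ produces the $C_0C_2(n)\sqrt{k_0}$ contribution. The obstacle you flag is not real, for two reasons: on a K\"ahler manifold $\Delta_{g(t)}\tilde\rho$ is a fixed multiple of ${\rm tr}_{\omega(t)}\sqrt{-1}\partial\overline{\partial}\tilde\rho$, which involves no Christoffel symbols and is bounded using only the uniform equivalence $g(t)\sim g_0$ and the $g_0$-Hessian bound of Lemma \ref{l7.2} (no Shi derivative estimates needed); and the normalization of $\Delta$ is irrelevant, since only its sign at a spatial maximum enters.
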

\begin{proof}
The proof is similar to that of Lemma 15 in \cite{WY20}, we only need to consider the function
\begin{equation}
\tilde{f}(x,\eta,t)=\frac{W_{\alpha\bar{\beta}}\eta^{\alpha}\bar{\eta}^{\beta}}{|\eta|_{\omega(x,t)}^2}-Ct+\kappa
\end{equation}
instead of $f(x,\eta,t)$ in the proof of Lemma 15 in \cite{WY20}.
\end{proof}


\end{document}